\newtheorem{theorem}{Theorem}[section]
\newtheorem{proposition}[theorem]{Proposition}
\newtheorem{lemma}[theorem]{Lemma}
\newtheorem{corollary}[theorem]{Corollary}
\newtheorem{definition}[theorem]{Definition}
\theoremstyle{definition}
\newenvironment{remark}
  {\pushQED{\qed}\remarkx}
  {\popQED\endremarkx}
\newenvironment{assumptionName}[1]
  {
    \pushQED{\qed}\assumptionNamex}
  {\popQED\endassumptionNamex}
\numberwithin{equation}{section}
\let\emptyset\varnothing
\newcommand\mystrutsub{\vrule height 0pt depth 2.5pt width 0pt}   
\newcommand{\ptrace}[2]{{\mystrutsub}_{#2}{#1}}
\newcommand{\QSD}[1]{\mathring\pi^{B_{#1}}_0}
\newcommand{\pev}[1]{\mathring\lambda^{B_{#1}}_0}
\newcommand{\nextev}[1]{\mathring\lambda^{B_{#1}}_1}
\newcommand{\QSDsmash}[1]{\smash{\mathring\pi^{B_{#1}}_0}}
\newcommand{\specgap}[1]{\mathring\varrho_{#1}}
\newcommand{\optto}{\twoheadrightarrow}
\newcommand{\Hgap}{\widehat H_0}
\newcommand{\trunc}[1]{#1_{\rm{tr}}}
\newcommand{\Biggintpartplus}[1]{\Biggl\lceil#1\Biggr\rceil}
\newcommand{\X}{\cX} 
\newcommand{\MN}{\cM}
\DeclareMathOperator{\vspan}{span}       
\newcommand{\bra}[1]{\langle#1|}
\newcommand{\ket}[1]{|#1\rangle}
\newcommand{\braket}[2]{\langle#1|#2\rangle}
\newcommand{\ketbra}[2]{\ket{#1}\bra{#2}}
\newcommand{\braketK}[3]{\langle#1|#2|#3\rangle}
\newcommand{\eps}{\varepsilon}
\newcommand{\Hhat}{\widehat{H}}
\begin{document}


\title{Reducing metastable continuous-space Markov chains\\
to Markov chains on a finite set}
\author{Nils Berglund}
\date{}   

\maketitle

\begin{abstract}
\noindent
We consider continuous-space, discrete-time Markov chains on $\R^d$, that admit a finite number $N$ of metastable states. Our main motivation for investigating these processes is to analyse random Poincar\'e maps, which describe random perturbations of ordinary differential equations admitting several periodic orbits. We show that under a few general assumptions, which hold in many examples of interest, the kernels of these Markov chains admit $N$ eigenvalues exponentially close to $1$, which are separated from the remainder of the spectrum by a spectral gap that can be quantified. Our main result states that these Markov chains can be approximated, uniformly in time, by a finite Markov chain with $N$ states. The transition probabilities of the finite chain are exponentially close to first-passage probabilities at neighbourhoods of metastable states, when starting in suitable quasistationary distributions. 
\end{abstract}

\leftline{\small{\it Date.\/} March 22, 2023. Final version, accepted in 
Annales de l'Institut Henri Poincar\'e, January 15, 2024.}
\leftline{\small 2020 {\it Mathematical Subject Classification.\/} 
60J05,  
37H05   
(primary), 
60J35,  
34F05   
(secondary).
}
\noindent{\small{\it Keywords and phrases.\/}
Random Poincar\'e map,
metastability,
quasistationary distributions, 
trace process, 
large deviations.
}  


\section{Introduction and informal statement of results} 
\label{sec:intro} 

In this work, we are concerned with continuous-space, discrete-time Markov chains, depending on a small parameter $\sigma \geqs 0$, which reduce to a deterministic map when $\sigma = 0$. Our main motivation for considering these processes is related to the notion of random Poincar\'e maps. 
Consider a stochastic differential equation (SDE) in $\R^d$, which is a weak-noise perturbation of an ordinary differential equation (ODE), admitting a finite number of asymptotically stable periodic orbits. In the deterministic limiting case, it is useful to introduce a surface of section $\Sigma$, transverse to the flow, and to study the sequence of returns of an orbit to $\Sigma$. This allows in particular to study stability and bifurcations of periodic orbits in a systematic way. 

A similar notion can be introduced in the stochastic case, taking some care in defining what one means by returns to $\Sigma$: one has to require that sample paths make some excursion away from $\Sigma$ between returns, in order to avoid accumulation of intersection points. To our knowledge, this notion appeared first in the works~\cite{Weiss_Knobloch_1990} by Weiss and Knobloch, and~\cite{HitczenkoMedvedev} by Hitczenko and Medvedev. In~\cite{BerglundLandon}, random Poincar\'e maps were used to study the distribution of small oscillations in the stochastic FitzHugh--Nagumo equation. In~\cite{HitczenkoMedvedev1}, they allowed to characterise the effect of noise on elliptic bursting. Random Poincar\'e maps also proved useful in the analysis of mixed-mode oscillations in systems such as the stochastic Koper model, featuring a folded-node singularity~\cite{berglund2013random}, and in determining the distribution of transition points through an unstable periodic orbit~\cite{berglund2014noise}. 

The work~\cite{BaudelBerglund_2017} initiated a more systematic study of random Poincar\'e maps, from the point of view of spectral theory. Its main result is that under a metastable hierarchy assumption on the $N$ stable periodic orbits, the kernels describing random Poincar\'e maps have exactly $N$ eigenvalues that are exponentially close to $1$. In addition, the remaining part of the spectrum is separated from those $N$ leading eigenvalues by a spectral gap, scaling like the logarithm of the inverse of the noise intensity. Asymptotic expressions for the leading eigenvalues and eigenfunctions in terms of commitor functions were also obtained in~\cite{BaudelBerglund_2017}.

The present work concerns a more general class of continuous-space Markov chains, which contain random Poincar\'e maps as a particular case, but are not limited to them. For instance, they also include randomly perturbed deterministic maps, of the form 
\begin{equation}
 X_{n+1} = \Pi(X_n) + \sigma\xi_{n+1}\;,
\end{equation} 
where $\Pi$ is a deterministic map defined on a subset of $\R^d$, and the $\xi_n$ are independent, identically distributed random variables. Deterministic iterated maps are common in applications such as population dynamics and epidemiology, and it is natural to study their perturbation by weak noise.

\paragraph*{Main results.}

We now give an informal statement of the main assumptions and results of this work. A precise formal statement is given in Sections~\ref{sec:setup} and~\ref{sec:main_result} below. We consider Markov chains on $\X_0 \subset \R^d$, with kernel $K_\sigma$, where $\sigma$ measures the noise intensity. In the deterministic case $\sigma = 0$, we assume that 
\begin{equation}
 K_0(x, A) = \ind{\Pi(x)\in A}\;,
\end{equation} 
for a deterministic map $\Pi: \X_0\to\X_0$. Our main assumptions are the following.

\begin{enumerate}
\item   \textbf{Deterministic limit:}
The deterministic map $\Pi$ leaves a compact set $\X\subset\X_0$ invariant. It has $N$ asymptotically stable fixed points in $\X$, and all its other limit sets are unstable fixed points. The aim of this assumption is to ensure that the asymptotic dynamics spends most of the time near a finite set of fixed points. 

\item   \textbf{Large-deviation principle:}
For positive $\sigma$, the kernel $K_\sigma$ has a smooth density, and it obeys a large-deviation principle with good rate function $I$. The rate function $I$ will be used to define a notion of quasipotential, that describes the exponential asymptotics of transition times between metastable sets.

\item   \textbf{Recurrence:}
For $\sigma > 0$, the Markov chain is positive Harris recurrent. This means in particular that it will reach any open set in a time having finite expectation. In particular, when starting anywhere in the compact set $\X$, the expected return time to $\X$ is bounded by a finite quantity $E_\X(\sigma)$. This assumption is needed for the existence of a spectral gap, between the $N$ leading eigenvalues, and the remainder of the spectrum of $K_\sigma$. 

\item   \textbf{Positivity:}
The process satisfies a uniform positivity condition in the neighbourhood of the stable fixed points of $\Pi$. This is a more technical property, defined in Section~\ref{ssec:setup_positivity} below, which essentially amounts to a lower bound of Doeblin type on transition densities. This assumption guarantees that the process conditioned on remaining near a stable fixed point relaxes to a so-called quasistationary distribution. 
\end{enumerate}

While the first two assumptions are quite natural, it may seem more difficult to ensure the last two assumptions. However, we will show in Section~\ref{sec:applic} that they are actually satisfied under quite weak conditions for the processes we are interested in, namely random Poincar\'e maps and randomly perturbed iterated maps. In particular, we will show that $E_\X(\sigma)$ is at most of order $\log(\sigma^{-1})$ in these cases, while the large-deviation principle implies that it is always at least sub-exponential in $\sigma$, in the sense that $\e^{-\eta/\sigma^2} E_\X(\sigma) \to 0$ as $\sigma\to 0$ for any $\eta > 0$.  

Our first main result reads as follows.

\begin{proposition}[Proposition~\ref{prop:spectral_gap}]
For sufficiently small positive $\sigma$, the kernel $K_\sigma$ has exactly $N$ eigenvalues which are exponentially close to $1$. All remaining eigenvalues have a modulus smaller than 
$\varrho = \e^{-c/E_\X(\sigma)}$ for some constant $c>0$. 
\end{proposition}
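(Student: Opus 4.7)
The plan is to produce the $N$ leading eigenvalues by localising $K_\sigma$ on metastable neighbourhoods, and then to exploit Harris recurrence together with the Doeblin-type positivity to show that all other eigenvalues are contracted when $K_\sigma$ is iterated over roughly $E_\X(\sigma)$ steps. Concretely, for each stable fixed point $x_i^*$ of $\Pi$ I would fix a small neighbourhood $B_i$ lying in its basin of attraction, and consider the sub-Markovian kernel $K_\sigma^{B_i}$ obtained by killing the chain on exit from $B_i$. Under the Doeblin lower bound near $x_i^*$, a Krein--Rutman / Perron--Frobenius argument applied to $K_\sigma^{B_i}$ yields a simple principal eigenvalue $\pev{i}$ with quasistationary distribution $\QSD{i}$, separated from the rest of the spectrum of $K_\sigma^{B_i}$ by a gap of order $1$. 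The large-deviation principle implies that the one-step escape probability from $B_i$ starting under $\QSD{i}$ is $\e^{-\Theta(1/\sigma^2)}$, so $1-\pev{i}$ is exponentially small.

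Next I would lift these local data to $N$ approximate eigenfunctions of the full kernel $K_\sigma$, for example via the commitor-type functions $h_i(x) = \Prcxm{x}{\tau_{B_i} < \tau_{B_j}\ \text{for all } j\neq i}$, whose image under $K_\sigma$ differs from $h_i$ only by direct-jump terms across the separatrices, which are exponentially small in $\sigma$ by the LDP. A finite-rank perturbation argument of Bauer--Fike type (or direct construction of a bi-orthogonal system between the $h_i$ and suitable test measures concentrated near the $\QSD{i}$) then extracts at least $N$ genuine eigenvalues of $K_\sigma$ at distance $\e^{-c/\sigma^2}$ from $1$.

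For the complementary bound, suppose $\lambda$ is an eigenvalue of $K_\sigma$ with $|\lambda|>\varrho$ and $\varphi$ an associated eigenfunction. The idea is to show that $\varphi$ is already essentially captured by the span of the $N$ approximate eigenfunctions built above, and otherwise would be contracted away. Using positive Harris recurrence, for any $x \in \X$ the chain returns to $\X$ in time of expectation at most $E_\X(\sigma)$, so for $n$ of the order of $E_\X(\sigma)$ the iterated kernel $K_\sigma^n$ places a definite mass on $\X$ from any starting point. Combined with the Doeblin lower bound near each $x_i^*$, this produces a minorisation of $K_\sigma^n$ by a rank-$N$ sub-stochastic operator, from which one obtains a contraction of the oscillation of any function orthogonal to the span of the $h_i$, with rate $1 - c_1/E_\X(\sigma)$. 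Consequently $|\lambda|^n \leqs 1 - c_1/E_\X(\sigma)$, and taking logarithms yields $|\lambda| \leqs \e^{-c/E_\X(\sigma)}$, which both bounds any extra eigenvalue and forces the count of leading eigenvalues to be exactly $N$.

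The main obstacle will be to convert the probabilistic return-time bound $E_\X(\sigma)$ into a genuine operator-norm contraction on a codimension-$N$ subspace, because $K_\sigma$ is neither self-adjoint nor compact on $L^\infty(\X_0)$ and the usual min-max principles are unavailable. The standard route is a Nummelin-style splitting driven by the Doeblin condition, which produces a regeneration structure allowing one to compare the spectrum of $K_\sigma$ on the codimension-$N$ complement with that of a truly contracting sub-Markov kernel. Controlling all constants so that the gap exponent $c$ is independent of $\sigma$ for small $\sigma$, and so that exactly $N$ eigenvalues (not more, not fewer) lie in the exponentially-close-to-$1$ cluster, is the delicate part; everything else is bookkeeping around the Doeblin minorisation and the LDP.
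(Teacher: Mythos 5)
Your proposal identifies several of the right ingredients (QSDs on the $B_i$, the LDP for one-step escape probabilities, positivity near the stable fixed points, and the central role of $E_\X(\sigma)$), and the heuristic computation $|\lambda|^n\leqs\tfrac12$ for $n\sim E_\X(\sigma)$ does reproduce the structural form $\varrho=\e^{-c/E_\X(\sigma)}$ of the bound. However, the route you sketch is genuinely different from the paper's, and the step you flag yourself as "the delicate part" is precisely where the approaches diverge and where your sketch has a real gap.

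The paper does not attempt to build approximate eigenfunctions from committors and then invoke Bauer--Fike, nor does it run a Nummelin splitting on the full kernel. Instead it first passes to the trace process $K=\ptrace{K_\sigma}{\X}$ (so that one works with a compact kernel on a bounded set), and then uses a Feynman--Kac representation: eigenpairs of $K$ with $|\e^{-u}|>(\tfrac12)^{1/m_0}$ (where $m_0\sim E_\X(\sigma)$ is the time needed to hit $\MN$ with probability $\geqs\tfrac12$) are in bijection with eigenpairs of a \emph{tilted return kernel} $K^u$ that acts only on $\MN$. This kernel is then compared in a sup-total-variation operator norm with the explicitly finite-rank kernel $K^\star(x,\cdot)=\sum_i\ind{x\in B_i}\Prcxm{\QSD{i}}{X_{\tau^+_\MN}\in\cdot}$, whose nonzero spectrum is that of an $N\times N$ stochastic matrix. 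A Gohberg-type resolvent perturbation lemma then yields the exact count of $N$ eigenvalues inside a small disc around $1$; everything outside follows because $K$ and $K^u$ only agree on eigenvalues of modulus $>(\tfrac12)^{1/m_0}$, which is the source of the $\varrho$ bound. This architecture avoids exactly the difficulty you raise: there is no need to exhibit an invariant codimension-$N$ subspace of the non-self-adjoint, non-normal operator and prove a norm contraction on it.

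The concrete gap in your sketch is the passage from a rank-$N$ minorisation $K_\sigma^n(x,\cdot)\geqs\sum_i\alpha_i(x)\nu_i(\cdot)$ to the spectral conclusion. A rank-$1$ Doeblin minorisation yields total-variation contraction and hence a bound on the full non-unit spectrum; but a rank-$N$ minorisation does \emph{not} automatically produce an invariant rank-$N$ subspace plus a contracting complement, because the operator $\sum_i\alpha_i\otimes\nu_i$ is not a spectral projector of $K_\sigma^n$ and the residual kernel $K_\sigma^n-\sum_i\alpha_i\otimes\nu_i$ need not commute with it. Getting the exact count of $N$ leading eigenvalues, rather than merely "at least $N$," from this route would require additional structure (for instance, choosing the $\nu_i$ to be the QSDs and the $\alpha_i$ to be exact committor-like eigenfunctions, at which point you are essentially reconstructing the paper's $K^\star$ and would still need a resolvent comparison to make the count rigorous). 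The paper's use of Gohberg's lemma (\Cref{lem:resolvent}) is the device that makes the count "exactly $N$" honest, and there is no counterpart to it in your proposal.

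Two smaller remarks. First, your committor functions $h_i$ are defined as hitting $B_i$ before \emph{all} other $B_j$; in the setting here without a metastable hierarchy these are not close to genuine eigenfunctions of $K$ (the paper's approach deliberately avoids relying on them). Second, the Krein--Rutman step you invoke to get a spectral gap for $K^{B_i}_\sigma$ of "order $1$" is not what happens: the relevant gap for the killed trace process on $B_i$ scales with the parameter $n_0(\sigma)$ from the uniform positivity assumption, which can grow like $\log(\sigma^{-1})$; this is quantified in the paper via \eqref{eq:spectral_gap} and is one of the reasons the choice of iteration length $m$ in the proof must be coupled to both $n_0(\sigma)$ and $E_\X(\sigma)$.
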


Note that~\cite[Thm.~3.2]{BaudelBerglund_2017} provides sharper bounds on the $N$ leading eignvalues, under a more restrictive metastable hierarchy assumption (essentially, all transitions between fixed points should happen on different exponential timescales). In that case, those eigenvalues can also be shown to be real. Here, however, we do not make such an assumption.

The existence of a spectral gap already shows that after a time of order $1/E_\X(\sigma)$, the process will be close to a finite-dimensional subspace of the space of measures on $\X_0$. The difficulty is that it is not straightforward to connect this finite-dimensional space to quantities that have a probabilistic interpretation. Our second main result provides such a connection. 
To state it, we introduce the \emph{trace process} $(X_{\tau^{+,n}})_{n\geqs0}$ of the Markov chain. Here $\tau^{+,n}$ denotes the time of $n$th return of the chain to a suitably defined neighbourhood $\MN$ of the set of stable fixed points, given by a union of neighbourhoods $B_i$ of these points. 

\begin{theorem}[Theorem~\ref{thm:reduction}]
Let $m(\sigma)$ be a function satisfying 
\begin{equation}
 \lim_{\sigma\to0} \sigma^2 \log m(\sigma) = \theta 
\end{equation} 
for a sufficiently small parameter $\theta > 0$. Then the probability of $X_{\tau^{+,nm(\sigma)}}$ belonging to $B_j$, when starting in $B_i$, is well-approximated by the probability of a Markov chain $(Y_n)_{n\geqs0}$ with values in $\set{1,\dots,N}$ being in state $j$. The transition probabilities of this Markov chain are given, up to exponentially small multiplicative errors, by the probability of the trace process first hitting $B_j$ at time $m(\sigma)$, when starting in a quasistationary distribution on $B_i$.  
\end{theorem}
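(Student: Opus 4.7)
The plan is to exploit two separate timescales contained in a single macro-step of length $m(\sigma)$ trace steps. Since $\sigma^2 \log m(\sigma) \to \theta$ with $\theta$ small, $m(\sigma)$ is super-polynomial in $\sigma^{-1}$ but sub-exponential in $\sigma^{-2}$, so it is much longer than the intrametastable mixing time and much shorter than the large-deviation exit time from any well. Consequently, over a single macro-step the process first relaxes, inside its current well $B_i$, to the local quasistationary distribution (QSD) $\QSD{i}$, and then executes at most one transition to another well, with probability governed by the leading spectral projections of $K_\sigma$.

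First I would develop the local QSD theory for each $B_i$. Restricting $K_\sigma$ to $B_i$ with killing on exit and using the positivity assumption (Doeblin-type lower bound), a standard Perron--Frobenius / Birkhoff-cone argument gives a unique QSD $\QSD{i}$ with principal eigenvalue $\pev{i}(\sigma) = 1 - \Order{\e^{-\bar H_i/\sigma^2}}$ and a gap $\specgap{i}$ bounded away from $0$ uniformly in $\sigma$. This yields
\begin{equation*}
\bignorm{\cL(X_k \mid X_1,\dots,X_k \in B_i) - \QSD{i}}_{\mathrm{TV}} \leqs C(1 - \specgap{i})^k
\end{equation*}
for any initial law on $B_i$, so after $k \geqs C' \log(1/\sigma)$ trace steps the conditional law is polynomially close to $\QSD{i}$. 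Moreover, since $1 - \pev{i}(\sigma)^k$ is exponentially small for $k \leqs m(\sigma)$, the conditioning on non-exit may be removed at the same cost, and the unconditional law on $B_i$ is also close to $\QSD{i}$.

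Next, I would define the candidate transition matrix $P_{ij} := \bigprobin{\QSD{i}}{X_{\tau^{+,m(\sigma)}} \in B_j}$. By the Markov property,
\begin{equation*}
\bigprobin{x}{X_{\tau^{+,nm(\sigma)}} \in B_j} = \sum_{k=1}^N \bigprobin{x}{X_{\tau^{+,(n-1)m(\sigma)}} \in B_k} \, q_{kj}^{(n)}(x),
\end{equation*}
where $q_{kj}^{(n)}(x)$ denotes the conditional probability of landing in $B_j$ one macro-step later, given that $X_{\tau^{+,(n-1)m(\sigma)}}$ lies in $B_k$. The previous step will show that this conditional law is close to $\QSD{k}$ in total variation, hence $q_{kj}^{(n)}(x) = P_{kj} + \Order{\e^{-c/\sigma^2}}$. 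Iterating, with $\MN$ acting as an almost-absorbing set in view of Proposition~\ref{prop:spectral_gap}, yields the claimed approximation with cumulative error of order $n\e^{-c/\sigma^2}$.

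The main obstacle is justifying the replacement $q_{kj}^{(n)}(x) \approx P_{kj}$: it requires that the conditional law of $X_{\tau^{+,(n-1)m(\sigma)}}$ given it lies in $B_k$ be close to $\QSD{k}$. This is immediate for trajectories that have resided in $B_k$ for more than $C \log(1/\sigma)$ trace steps, but must be handled separately for those that have just entered $B_k$. The key point is that the fraction of trajectories experiencing such a fresh arrival at the macro-time $(n-1)m(\sigma)$ is itself $\Order{\e^{-c/\sigma^2}}$, since the typical sojourn in $B_k$ is exponentially longer than $m(\sigma)$. Combined with Proposition~\ref{prop:spectral_gap}, which forces the law to lie close to the $N$-dimensional leading invariant subspace of $K_\sigma$ after one macro-step, this closes the induction.
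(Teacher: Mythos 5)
Your outline correctly identifies the two timescales exploited by the choice of $m(\sigma)$, the role of the local QSD $\QSD{i}$ and the positivity assumption, and the subtle point about trajectories that have freshly arrived in $B_k$ and have not yet relaxed — this last point is indeed handled in the paper by splitting the sum over jump times into ``good'' terms (with enough relaxation time) and ``bad'' terms (Propositions~\ref{prop_Px_Ppi} and~\ref{prop:expecF}). Up to that level, your plan is aligned with the paper's.

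However, there is a genuine gap in how you close the argument, and it concerns the single most important feature of Theorem~\ref{thm:reduction}. You propose to iterate the Markov property macro-step by macro-step, replacing the conditional law on each $B_k$ by $\QSD{k}$ and paying an error $\Order{\e^{-c/\sigma^2}}$ per step. You then state that this ``yields the claimed approximation with cumulative error of order $n\e^{-c/\sigma^2}$.'' But the theorem asserts a bound that is \emph{uniform in $n$}: the error in~\eqref{eq:approx_MC} is $C(\e^{-[\Hhat_{\min}-\eta]/\sigma^2} + \varrho^{nm})$, and the second term even decays with $n$. The paper explicitly stresses after the theorem that uniformity in time is what makes the result useful on timescales long compared with the metastable transition times. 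For such $n$ — which are themselves exponentially large in $\sigma^{-2}$ — a linearly accumulating $n\e^{-c/\sigma^2}$ error is vacuous. The invocation of Proposition~\ref{prop:spectral_gap} at the end of your proposal is not enough by itself: knowing the law is exponentially close to the $N$-dimensional leading invariant subspace of $K$ does not tell you that \emph{your} candidate chain $P_{ij} = \bigprobin{\QSD{i}}{X_{\tau^{+,m}_\MN} \in B_j}$ tracks the projected dynamics without drift.

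The missing idea is that the finite chain must be constructed \emph{spectrally exactly}, not approximately. The paper builds a biorthogonal basis $(\bra{\mu_i}, \ket{\psi_j})$ of the $N$-dimensional invariant subspaces $\cE^0_1, \cE^0_\infty$ of the truncated kernel $\trunc{K^0}$ (Lemma~\ref{lem:mupsi}), chosen so that $\braket{\mu_i}{\psi_j} = \delta_{ij}$ and $\sum_i \ketbra{\psi_i}{\mu_i} = \Pi^0$. The matrix $P_{ij} := \braketK{\mu_i}{(\trunc{K^0})^m}{\psi_j}$ is then an \emph{exact} representation of $(\trunc{K^0})^m$ in this basis, so $\bigprobin{i}{Y_n=j} = \braketK{\mu_i}{(\trunc{K^0})^{nm}}{\psi_j}$ holds identically for all $n$, with no accumulation. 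The only errors are (a) the difference between $K^0$ and $\trunc{K^0}$, which gives the decaying $\varrho^{nm}$ term, and (b) the mismatch between $(\mu_i,\psi_j)$ and $(\QSD{i},\indicator{B_j})$, which Proposition~\ref{prop:norms_mupsi} bounds by a single exponentially small quantity independent of $n$. Your per-step estimates (relaxation to QSD, control of fresh arrivals) are still needed — they enter through Corollaries~\ref{cor:eps_ij}--\ref{cor:matrix_element_K0m} to show the basis mismatch $\eps_{ij}$ is exponentially small and that the paper's $P_{ij}$ agrees with your naive one up to a multiplicative $1+\Order{\e^{-(\theta-\eta)/\sigma^2}}$ — but they serve as ingredients of a one-shot resolvent estimate, not as terms to be summed along the trajectory.
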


More precisely, we will show that  there exists a linear map $\cL$ between measures on $\MN$ and measures on $\set{1,\dots,N}$, such that $\fP Y_n^{-1} = \cL(\fP X_{\tau^{+,nm(\sigma)}}^{-1})$ for all $n\in\N_0$. 

We refer to the statement of Theorem~\ref{thm:reduction} below for a precise formulation of what we mean by being well-approximated. Essentially, the difference between the distributions of the two processes is bounded uniformly in time by an exponentially small quantity. The result is thus mostly useful on long timescales, when the process has had an opportunity to explore several metastable states. Then our result states that whenever the finite Markov chain $Y_n$ is in state $j$ with a probability that is not exponentially small, the process $X_{\tau^{+,nm(\sigma)}}$ will belong to $B_j$ with a probability that is exponentially close to it. 

\paragraph*{Related results.}

The problem of approximating Markov processes by Markov chains on a finite set has been investigated for a long time, in particular in the case of SDEs. The idea is already present in the monograph~\cite{FreidlinWentzell_book} by Freidlin and Wentzell, where Markov chain approximations are used for instance to investigate the exit problem, and to approximate invariant measures. There is however no quantitative statement on how well the Markov chain approximates the original process directly. 

The works~\cite{BEGK,BGK} by Bovier, Eckhoff, Gayrard and Klein investigate reversible diffusion processes, governed by gradient SDEs of the form 
\begin{equation}
\label{eq:SDE_rev} 
 \6x_t = -\nabla V(x_t)\6t + \sigma \6W_t\;,
\end{equation} 
where $V$ is a confining multiwell potential. These articles use a potential-theoretic approach, that was originally limited to the reversible case, but was extended by Landim, Mariani and Seo to more general diffusions~\cite{Landim_Mariani_Seo19}. One result in~\cite{BGK} is that under a metastable hierarchy assumption, the expectations of transition times between certain well-chosen metastable sets are close to expectations of similar transitions in a finite Markov chain. 

Because of their importance in simulation algorithms in molecular dynamics, in particular in kinetic Monte Carlo algorithms~\cite{Voter07}, these results prompted a series of works aiming at obtaining precise descriptions of the exit location of solutions of SDEs from metastable sets, see in particular the works by Di Ges\`u, Leli\`evre, Le Peutrec and Nectoux \cite{GLPN19,GLPN20a,LPN22}. These authors also emphasized the importance of quasistationary distributions (QSDs) in metastable states for the reduction problem~\cite{GLPN_2016}. 
See for instance the work~\cite{Champagnat_Villemonais_23} by Champagnat and Villemonais for a recent review on QSDs. In parallel, results on the spectrum of reversible diffusions of the form~\eqref{eq:SDE_rev} have been extended to non-reversible diffusions by Le Peutrec and Michel~\cite{LePeutrec_Michel_20}, using methods from semiclassical analysis. 

In a different direction, many works have investigated the metastable behaviour of conti\-nuous-time Markov chains on countable sets, arising either in statistical physics, or as spatial discretisation of SDEs. In~\cite{Beltran_Landim_2010}, Beltr\'an and Landim introduced in particular the idea of a trace process to obtain a reduced description of the dynamics, while in~\cite{Beltran_Landim_2015_martingale_approach} they introduced a martingale method to study the convergence of sequences of such processes with increasingly large state spaces. In~\cite{Landim_Loulakis_Mourragui_2018}, Landim, Loulakis and Mouragui obtained convergence of finite-dimensional distributions of the so-called order parameter to those of a finite Markov chain. See~\cite{Landim_review_2019} for an overview of these results, and~\cite{Landim_Seo_18} for related results on sequences of discretisations of SDEs. 

Finally, a recent approach based on solutions of Poisson equations managed to show convergence of time-rescaled solutions to  metastable SDEs to finite Markov chains, in the limit of the noise intensity going to zero. See the work~\cite{Rezakhanlou_Seo_2021scaling} by Rezakhanlou and Seo for the reversible case, and the work~\cite{Jungkyoung_Seo_22} by Lee and Seo for non-reversible cases with known invariant measure, of Gibbs type. An overview is found in~\cite{Seo2020}. 

The main difference between the present work and those mentioned above, apart from the fact that it concerns continuous-time Markov chains instead of SDEs or Markov chains on countable spaces, is that it splits the approximation question into two separate problems. The first one, which is the main focus of this work, is to show that there exists a finite Markov chain that provides a good approximation to the metastable process. The second one is to obtain sharp asymptotics on transition probabilities of the finite Markov chain. This question is addressed here only in the sense of logarithmic equivalence, which naturally follows from the large-deviation principles. Sharper asymptotics will hopefully be determined in the future. The present results show, however, that it is sufficient to obtain such sharper asymptotics when starting in suitable QSDs. 

We finally remark that there are many works analysing the dynamics of singularly perturbed Markov chains, such as $(Y_n)_{n\geqs0}$. See for instance 
\cite{Schweitzer_68,Hassin_Haviv_92,AvrachenkovLasserre99,YinZhang1,betz2016multi,Freidlin_Koralov_17}.

\paragraph*{Structure of the paper.}
Section~\ref{sec:setup} contains the detailed set-up of the Markov processes we are interested in, states the four main assumptions, and introduces useful objects such as the trace process and quasistationary distributions. Section~\ref{sec:main_result} contains the precise statements of the two main results mentioned above. In Section~\ref{sec:applic}, we show that most of the main assumptions do hold quite generally in the case of the two main applications we have in mind, namely randomly perturbed iterated maps and random Poincar\'e maps. Sections~\ref{sec:proof_spectral_gap} and~\ref{sec:proof_reduction} contain the proofs of the two main results. Finally, the appendix contains the proofs of some auxiliary results used in Sections~\ref{sec:setup} and~\ref{sec:applic}.  

\paragraph*{Acknowledgments.}
The author would like to thank Manon Baudel for useful discussions that inspired the early stages of this work. This work was supported by the ANR project PERISTOCH, ANR--19--CE40--0023.



\section{Set-up and assumptions} 
\label{sec:setup} 

Let $\X_0 \subset \R^d$ be an open, connected domain, and denote its Borel 
$\sigma$-algebra $\cB(\X_0)$ by $\cS_0$. Our object of study are families  
$\set{K_\sigma}_{0\leqs\sigma<\sigma_0}$ of Markov kernels on 
$(\X_0,\cS_0)$, such that $K_0$ is a singular kernel, called the 
deterministic limit, while for $\sigma>0$ the kernel $K_\sigma$ is 
positive Harris recurrent and admits a continuous density. 

We denote by $(X^\sigma_n)_{n\geqs0} = (X_n)_{n\geqs0}$ the Markov chain with 
kernel $K_\sigma$, starting from some specified initial distribution $\mu$, 
and write $\probin{\mu}{\cdot}$ and $\expecin{\mu}{\cdot}$ for the associated 
law and expectations. If $\mu=\delta_x$, we simply write $\probin{x}{\cdot}$ 
and $\expecin{x}{\cdot}$. Given $A\in\cS_0$ we will sometimes use the notation 
\begin{equation}
 \bigexpecin{A}{\cdot} = \sup_{x\in A}\bigexpecin{x}{\cdot}\;.
\end{equation}
For any set $A\in\cS_0$, we denote by 
\begin{equation}
 \tau_A(x) = \inf\bigsetsuch{n\geqs0}{X_n\in A}
 \qquad\text{and}\qquad 
 \tau^+_A(x) = \inf\bigsetsuch{n\geqs1}{X_n\in A}
\end{equation} 
the hitting time of $A$ and return time to $A$ of $(X_n)_{n\geqs0}$ starting 
in $x$ (with the convention that $\inf\emptyset=\infty$). Note that 
$\tau^+_A(x) = \tau_A(x)$ whenever $x\notin A$, while $0 = \tau_A(x) < 
\tau^+_A(x)$ when $x\in A$. We will drop the argument $x$ whenever it is clear 
from the context. 

The kernel $K_\sigma$ induces two Markov semigroups in the standard way:
for any bounded measurable test function $\varphi\in L^\infty$, we have 
\begin{equation}
\label{eq:K_Linf} 
 (K_\sigma\varphi)(x) = \int_{\X_0} K_\sigma(x,\6y)\varphi(y) 
 = \bigexpecin{x}{\varphi(X_1)}\;,
\end{equation}
while for any (signed) measure $\mu\in L^1$ we have 
\begin{equation}
\label{eq:K_L1} 
 (\mu K_\sigma)(\6y) = \int_{\X_0} \mu(\6x) K_\sigma(x,\6y) 
 = \bigprobin{\mu}{X_1\in\6y}\;.
\end{equation} 
For $n\in\N$, we denote by $K_\sigma^n$ the $n$-fold kernel, defined by 
$K_\sigma^1=K_\sigma$ and \begin{equation}
 K_\sigma^{n+1}(x,A) = \int_{\cX_0} K_\sigma^n(x,\6z) K_\sigma(z,A)
 = \bigprobin{x}{X_{n+1}\in A}\;. 
\end{equation}
We are going to need a number of more precise assumptions, which are detailed 
in the next subsections. These concern the deterministic limit kernel $K_0$, 
a large-deviation principle for $\sigma\to0$, as well as positive Harris 
recurrence and local uniform positivity assumptions guaranteeing convergence to 
a unique invariant distribution. 


\subsection{Singular deterministic limit} 
\label{ssec:setup_det}

Let $\Pi: \X_0 \to\X_0$ be a map of class $\cC^2$. Note that we do not 
assume that $\Pi$ is invertible. We would like $K_0$ to describe the 
deterministic dynamical system $X_{n+1} = \Pi(X_n)$, which amounts to setting 
\begin{equation}
 K_0(x,A) = \ind{\Pi(x)\in A}
 \qquad\forall A\in\cS_0\;.
\end{equation} 
The forward semigroup then takes the form of the composition operator 
\begin{equation}
 (K_0\varphi)(x) = \int_{\X_0} \ind{\Pi(x)\in \6y} \varphi(y) 
 = (\varphi\circ\Pi)(x)\;,
\end{equation} 
sometimes called \emph{Koopman operator} in the physics literature. 
The backward semigroup is given by the \emph{pushforward} operator 
\begin{equation}
 (\mu K_0)(A) = \int_{\X_0} \mu(\6x) \ind{\Pi(x)\in A} 
 = \mu(\Pi^{-1}(A))\;,
\end{equation} 
which is known as the \emph{transfer operator} or 
\emph{Ruelle--Perron--Frobenius operator} in dynamical systems theory. 

For $A\in\cS_0$, we write $\Pi^0(A)=A$, and define inductively, for any 
$n\geqs1$, $\Pi^n(A) = \Pi\circ\Pi^{n-1}(A)$ and $\Pi^{-n}(A) = 
\bigsetsuch{x\in\X_0}{\Pi^n(x)\in A}$ (note that the last set may be empty). The 
\emph{$\omega$-limit set} $\omega(x)$ of $x\in\X_0$ is the set of accumulation 
points of the \emph{forward orbit} $(\Pi^n(x))_{n\geqs0}$ as $n\to\infty$. The 
\emph{$\alpha$-limit set} $\alpha(x)$ of $x$ is defined as the set of 
accumulation points of the \emph{backward orbit} $(\Pi^{-n}(x))_{n\geqs0}$. 

A \emph{fixed point} $x^\star$ of $\Pi$ (that is, a point $x^\star\in\X_0$ 
satisfying $\Pi(x^\star)=x^\star$) is called \emph{linearly asymptotically 
stable} if the Jacobian matrix $\partial_x\Pi(x^\star)$ has a spectral radius 
strictly smaller than $1$, and \emph{linearly unstable} if it has a spectral 
radius strictly larger than $1$. 

\begin{assumptionName}{DET}[Deterministic limit]
\label{ass:DET} 
There exists a bounded, open connected set $\X\subset\X_0$ such that 
$\Pi(\X)\subset\X$. The map $\Pi$ admits finitely many limit 
sets in $\X$, which are either linearly asymptotically stable fixed points, 
denoted $x^\star_1, \dots, x^\star_N$, or linearly unstable fixed points. 
\end{assumptionName}

For each $j=1, \dots, N$, we let $B_j$ be a closed set, containing $x^\star_j$ 
in its interior, and such that $\Pi(B_j) \subset B_j$. We will assume that the 
diameter of all $B_j$ is bounded by a constant $\delta>0$, which we are going 
to take small, but which is independent of $\sigma$. We denote by
\begin{equation}
 \MN = \bigcup_{j=1}^N B_j
\end{equation} 
the \emph{metastable set} of the process. 

\begin{remark}
The case of $\Pi$ admitting finitely many periodic points $x^\star_i$ of bounded 
minimal period $m_i$ as $\omega$-limit sets (i.e., $\Pi^{m_i}(x^\star_i) = 
x^\star_i$ and $\Pi^n(x^\star_i) \neq x^\star_i$ for $1\leqs n\leqs m_i-1$) can 
be covered by considering the iterated kernel $K_\sigma^m$ instead of 
$K_\sigma$, where $m$ is the least common multiple of the periods $m_i$ of all 
periodic points. 
\end{remark}


\subsection{Large-deviation principle} 
\label{ssec:setup_lpd}

Our second assumption concerns the behaviour of the the kernel $K_\sigma$ for 
small positive $\sigma$.

\begin{assumptionName}{LDP}[Large-deviation principle]
\label{ass:LDP} 
The kernel $K_\sigma$ satisfies a large-deviation principle (LDP) with good rate 
function $I$. That is, there exists a lower semi-continuous function 
$I:\X_0\times\X_0\to\R_+$, with compact level sets, such that 
\begin{equation}
 \label{eq:ldp_O}
 \liminf_{\sigma\to 0} \sigma^2 
 \log K_\sigma(x,O) 
 \geqs -\inf_{y\in O} I(x,y)
\end{equation} 
holds for any open set $O\in\cS_0$ and any $x\in\X_0$, and 
\begin{equation}
 \label{eq:ldp_C}
 \limsup_{\sigma\to 0} \sigma^2 
 \log K_\sigma(x,C)
 \leqs -\inf_{y\in C} I(x,y)
\end{equation} 
holds for any closed set $C\in\cS_0$ and any $x\in\X_0$. 
Furthermore, $I(x,y) = 0$ if and only if $y = \Pi(x)$, and $I$ is continuous at 
$(x^\star,x^\star)$ whenever $\Pi(x^\star)=x^\star$. 
\end{assumptionName}

With any sequence $(x_0,x_1,\dots,x_n)$ of points in $\X_0$, we associate 
the rate function 
\begin{equation}
 I(x_0,x_1,\dots,x_n) = \sum_{j=1}^n I(x_{j-1},x_j)\;.
\end{equation} 
Then the probability of the Markov chain visiting small 
neighbourhoods of $x_0, \dots, x_n$ in this particular order is logarithmically 
equivalent to $\e^{-I(x_0,\dots,x_n)/\sigma^2}$. 
The \emph{quasipotential} between two points $x$ and $y$ is then defined as 
\begin{equation}
\label{eq:def_Vxy} 
 V(x,y) = \inf_{n\geqs1} \inf_{x_1,\dots,x_{n-1}\in\X_0} 
 I(x,x_1,\dots,x_{n-1},y)\;.
\end{equation}
It represents the cost of going from $x$ to $y$ in arbitrary time. 

For $1\leqs i\neq j\leqs N$, we denote by 
\begin{equation}
\label{eq:def_Hij} 
 H(i,j) = V(x^\star_i,x^\star_j)
\end{equation} 
the quasipotential between the stable fixed points $x^\star_i$ and 
$x^\star_j$, and we define 
\begin{equation}
\label{eq:def_H_0} 
 H_0 = \min_{i \neq j} H(i,j)\;.
\end{equation} 
An important role will be played by the so-called \emph{committor 
functions} $\smash{\probin{x}{\tau^+_{B_j} < \tau^+_{B_i}}}$ between different 
balls $B_i$ and $B_j$. The following standard consequence of the LDP shows that 
for starting points $x\in B_i$, $\probin{x}{\tau^+_{B_j} < 
\tau^+_{B_i}}$ behaves like $\smash{\e^{-H(i,j)/\sigma^2}}$.
We give its proof in Appendix~\ref{sec:proof_setup}.

\begin{proposition}[Large-deviation estimates on committor functions]
\label{prop:ldp_committor} 
For any $\eta>0$, there exists $\delta_0>0$ such that, if the diameter of the 
sets $B_i$ satisfies $\delta < \delta_0$, then 
\begin{align}
 \liminf_{\sigma\to0} \sigma^2\log \bigprobin{x}{\tau^+_{B_j} < \tau^+_{B_i}}
 &\geqs -H(i,j) - \eta \;, \\
 \limsup_{\sigma\to0} \sigma^2\log \bigprobin{x}{\tau^+_{B_j} < \tau^+_{B_i}}
 &\leqs -H(i,j) + \eta
 \label{eq:ldp_committor} 
\end{align} 
holds for all $x\in B_i$. 
\end{proposition}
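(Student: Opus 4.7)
This is a standard Freidlin--Wentzell-type consequence of the one-step LDP in Assumption~\ref{ass:LDP}, and I would prove the $\liminf$ and $\limsup$ bounds separately.

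For the lower bound, the plan is to exhibit, for each $\eta > 0$, a concrete tube of trajectories from $x \in B_i$ to $B_j$ carrying probability at least $\exp(-(H(i,j)+\eta)/\sigma^2)$. By the definitions in~\eqref{eq:def_Vxy} and~\eqref{eq:def_Hij}, there exists a finite sequence $z_0 = x^\star_i, z_1, \ldots, z_n = x^\star_j$ with $I(z_0,\ldots,z_n) \leqs H(i,j) + \eta/3$. Using continuity of $I$ at $(x^\star_i, x^\star_i)$ and shrinking the diameter $\delta$ if necessary, I would prepend a short transition from any $x \in B_i$ to $z_0$ at cost at most $\eta/3$. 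After perturbing the intermediate $z_k$ so that they lie outside $B_i \cup B_j$ (at negligible additional cost, again by continuity), applying the lower-bound LDP~\eqref{eq:ldp_O} to small open neighbourhoods of each $z_k$ and invoking the Markov property $n$ times yields the claim, the total error $\eta$ arising as a fixed finite sum of single-step errors of size $\eta/(3n)$.

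For the upper bound, I would decompose by the length of the first excursion,
\begin{equation*}
\bigprobin{x}{\tau^+_{B_j} < \tau^+_{B_i}} = \sum_{n=1}^\infty \bigprobin{x}{X_n \in B_j,\, X_k \notin B_i \cup B_j \text{ for } 1 \leqs k \leqs n-1},
\end{equation*}
and truncate at a threshold $T(\sigma)$ growing sub-exponentially in $\sigma^{-2}$. For $n \leqs T(\sigma)$, each summand is controlled via the iterated upper-bound LDP~\eqref{eq:ldp_C}: the infimum of the $n$-step rate function over excursions from $x \in B_i$ to $B_j$ avoiding $B_i$ in between is at least $\inf_{y\in B_j} V(x,y) \geqs H(i,j) - \eta/4$ for $\delta$ small, by continuity of $I$ at the stable fixed points. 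The tail $n > T(\sigma)$ is bounded using Markov's inequality applied to $\tau^+_{B_i \cup B_j}$, whose expectation is sub-exponential in $\sigma$ by the recurrence assumption, making it negligible compared to $\exp(-H(i,j)/\sigma^2)$.

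The hard part will be arranging the upper-bound argument so that the iterated LDP gives a uniform-in-$n$ bound on the range $1 \leqs n \leqs T(\sigma)$: a naive iteration accumulates an error linear in $n$, so $T(\sigma)$ must grow sub-exponentially while still dominating the expected return time $E_\X(\sigma)$. The sub-exponential character of $E_\X(\sigma)$ makes these two conditions compatible, but the per-step errors must be chosen on the order of $\eta/T(\sigma)$, which requires a mildly uniform version of the one-step LDP on compact sets. The lower bound, by contrast, involves only a fixed finite-length path and is essentially immediate once the path has been properly chosen.
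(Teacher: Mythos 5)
Your lower‐bound argument is essentially the paper's: both exhibit a finite chain of points realizing $V(x^\star_i,x^\star_j)$ to within $\eta$, use continuity of $I$ at $(x^\star_i,x^\star_i)$ to move the starting point into $B_i$, and apply the one-step LDP tube estimate. The paper arranges the intermediate points outside $B_i \cup B_j$ by a truncation argument (drop the portion of the path after the first visit to $B_j$, or before the last visit to $B_i$), whereas you propose to perturb them; since $I$ is only assumed lower semi-continuous away from the diagonal fixed points, the truncation route is the safer one, but this is a minor point.

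The upper bound is where there is a genuine gap in your proposal. The paper decomposes $\{\tau^+_{B_j} < \tau^+_{B_i}\}$ through the increasing sequence of path events $\Gamma_n = \{\tau^+_{B_j} < \tau^+_{B_i},\,\tau^+_{B_j}\leqs n\} \subset \X_0^{n+1}$ and bounds $\inf_{\bar\Gamma_n}I \geqs H(i,j)-\eta$ uniformly in $n$, passing (tersely) to the limit. You instead truncate at a threshold $T(\sigma)$ and control the tail $\fP^x(\tau^+_{B_i\cup B_j} > T)$ by Markov's inequality, claiming that $\E^x[\tau^+_{B_i\cup B_j}]$ is sub-exponential by recurrence. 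This is incorrect for $N>2$. Assumption~\ref{ass:REC} and Proposition~\ref{prop:ldp_EMN} give sub-exponential bounds on $\expecin{\X}{\tau^+_\X}$ and (via its proof) on $\expecin{\X}{\tau^+_\MN}$, but not on $\expecin{x}{\tau^+_{B_i\cup B_j}}$: since $B_i\cup B_j \subsetneq \MN$, the chain can first reach some other $B_k$ and then take a time of order $\e^{\min_\ell H(k,\ell)/\sigma^2}$ to escape, so $\expecin{B_i}{\tau^+_{B_i\cup B_j}}$ can be exponentially large. Choosing $T$ large enough to kill this tail would then put $T$ outside the sub-exponential regime where an iterated one-step LDP still gives the bound $\e^{-(H(i,j)-\eta)/\sigma^2}$ on $\fP^x(\Gamma_T)$. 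Fixing this requires decomposing the event by the first ball $B_\ell\subset\MN$ that is hit (whose hitting time \emph{is} controlled by the sub-exponential bound on $\expecin{\X}{\tau^+_\MN}$), paying a factor $\e^{-(H(i,\ell)-\eta)/\sigma^2}$ to land in $B_\ell$, and then iterating via the triangle inequality $H(i,\ell)+H(\ell,j)\geqs H(i,j)$; since there are at most $N$ distinct $B_\ell$, the iteration terminates. You have correctly identified that the uniformity-in-$n$ issue is the hard part, but the specific tail estimate you propose does not resolve it.
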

%


\subsection{Positive Harris recurrence} 
\label{ssec:setup_rec}

Properties of irreducibility, recurrence and positive recurrence can be defined 
in terms of hitting and return times, as discussed in~\cite{Meyn_Tweedie_92}. 
Given a $\sigma$-finite reference measure $\mu$ such that $\mu(\X_0)>0$, 
the process $(X_n)_{n\geqs0}$ is \emph{$\mu$-irreducible} if 
$\probin{x}{\tau^+_A < \infty}>0$ whenever $\mu(A)>0$. It is \emph{Harris 
recurrent} if $\probin{x}{\tau^+_A < \infty} = 1$ whenever $\mu(A)>0$, which is 
equivalent to the process visiting $A$ infinitely often. In this case, it is 
known~\cite{Nummelin84} that the process admits an essentially unique invariant 
measure $\pi_0$, and for any $A\in\cS_0$ with $\pi_0(A)>0$ and any measurable 
$f\geqs0$, one has 
\begin{equation}
\label{eq:pi_ergodicity} 
 \pi_0(f) := \int_{\X_0} f(x)\pi_0(\6x) = \int_A \pi_0(\6x) 
\Biggexpecin{x}{\sum_{n=1}^{\tau^+_A} f(X_n)}\;.
\end{equation}
If $\pi_0$ can be normalized to a probability measure, the process is called 
\emph{positive Harris recurrent}. Setting $f=1$ in~\eqref{eq:pi_ergodicity} 
shows that this is the case if $\expecin{A}{\tau^+_A} < \infty$ for some $A$ 
with $0<\mu(A)<\infty$. An important role will be played by the quantity
\begin{equation}
\label{eq:EX} 
 E_\cX(\sigma) := \expecin{\cX}{\tau^+_\cX}
 = \sup_{x\in\cX} \expecin{x}{\tau^+_\cX}\;.
\end{equation} 
Here we will make the simplifying assumption, which is motivated by the 
applications we have in mind, that for $\sigma>0$, $K_\sigma(x,\cdot)$ is 
absolutely continuous with respect to Lebesgue measure. This will allow us to 
take Lebesgue measure as reference measure. By further assuming that the 
density $k_\sigma$ of $K_\sigma$ is continuous and strictly positive in 
$\X\times\X$, we guarantee that $K_\sigma(x,A)>0$ whenever 
$A\subset\X$ has positive Lebesgue measure, which amounts to an ellipticity 
condition. In addition, these $A$ are \emph{petite} sets in the sense 
of~\cite[Sect.~3]{Meyn_Tweedie_92}. 

\begin{assumptionName}{REC}[Density and positive Harris recurrence]
\label{ass:REC} 
Whenever $\sigma>0$, $K_\sigma$ admits a density $k_\sigma$ with 
respect to Lebesgue measure, that is, 
\begin{equation}
\label{eq:K_density} 
 K_\sigma(x,A) = \int_A k_\sigma(x,y)\6y
\end{equation} 
for any $x\in\X_0$ and any $A\in\cS_0$. The density $k^n_\sigma$ of 
$K^n_\sigma$ is continuous and strictly positive in $\X$ for all $n\in\N$. 
Furthermore, $K_\sigma$ is positive Harris recurrent (with respect to Lebesgue 
measure), and there exists $\sigma_0 > 0$ such that $E_\cX(\sigma) < \infty$ for 
all $\sigma\in(0,\sigma_0]$. 
\end{assumptionName}

It follows from~\cite[Thm.~4.6]{Meyn_Tweedie_92} that a sufficient condition 
for positive Harris recurrence is that there exist a \emph{Lyapunov 
function} $U:\X_0\to\R_+$, going to infinity as $x\to\infty$, and constants 
$\varepsilon > 0$, $a\geqs0$ satisfying the discrete drift condition 
\begin{equation}
\label{eq:Lyapunov} 
 (K_\sigma U)(x) \leqs U(x) - \varepsilon + a\ind{x\in\X}\;.
\end{equation} 
In addition, \cite[Thm.~4.3]{Meyn_Tweedie_92} shows that~\eqref{eq:Lyapunov} 
implies the bound 
\begin{equation}
 \expecin{x}{\tau^+_\X} \leqs \frac{1}{\varepsilon} U(x) 
 \qquad \forall x\in \X_0\;,
\end{equation} 
so that $E_\cX(\sigma)$ is indeed finite. Note that if $U$ is a Lyapunov function for $K_0$, then it is a good candidate for being a Lyapunov function for small positive $\sigma$. 

\begin{remark}
\label{rem:quasipotential} 
An alternative to assuming the existence of a Lyapunov function is to work with 
the process conditioned on staying in the set $\X$ forever, via Doob's $h$-transform 
(see for instance~\cite[App.~B]{BaudelBerglund_2017}). This has a negligible 
effect on spectral-theoretic results if we assume that there exists a constant 
$\theta > 0$ such that 
\begin{equation}
\min_{1\leqs i \leqs N} V(x^\star_i, y) 
\geqs \max_{1\leqs i\neq j\leqs N} H(i,j) + \theta
\end{equation} 
holds for all $y \in \X_0 \setminus \X$.
\end{remark}

The following result shows that the LDP also provides a 
rough estimate, of order $\e^{\eta/\sigma^2}$ with arbitrarily small 
$\eta>0$, for the mean hitting time of the metastable set $\MN$ when starting 
in $\X$. Its proof is postponed to Appendix~\ref{sec:proof_setup}.

\begin{proposition}[Mean hitting time of $\MN$]
\label{prop:ldp_EMN} 
For any $\eta>0$, there exist $\sigma_0, \delta_0>0$ such that one has 
$E_\cX(\sigma) \leqs \e^{\eta/\sigma^2}$, provided 
$0<\sigma<\sigma_0$ and the diameter of the $B_i$ is bounded by $\delta_0$.
\end{proposition}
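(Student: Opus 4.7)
The plan is to establish the (logically stronger) bound $\sup_{x\in\cX}\expecin{x}{\tau_\MN} \leqs \e^{\eta/\sigma^2}$ on the expected hitting time of the metastable set. Since $\MN\subset\cX$, this yields the stated bound on $E_\cX(\sigma)$: for $x\in\cX\setminus\MN$ one has $\tau^+_\cX\leqs\tau_\MN$; for $x\in\MN$ one uses $\Pi(\MN)\subset\MN$ together with the LDP upper bound \eqref{eq:ldp_C} and continuity of $I$ at $(x^\star_j,x^\star_j)$ to see that the one-step escape probability $K_\sigma(x,\cX^c)$ is super-exponentially small, so that $\expecin{x}{\tau^+_\cX}$ is of order $1$ plus an exponentially small correction.

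By Assumption~\ref{ass:DET}, every point of $\cX$ lies either in the basin of attraction $\cA_j$ of some stable fixed point $x^\star_j$, or on the (closed, lower-dimensional) stable manifold $\cW^s_k$ of some unstable fixed point $y^\star_k$. For $x\in\cA_j$ at positive distance from $\bigcup_k\cW^s_k$, the deterministic iterates $\Pi^n(x)$ enter $\interior(B_j)$ after finitely many steps; continuity of $\Pi^n$ together with a compactness argument delivers a uniform such $n_1$ on compact pieces of $\cA_j$ bounded away from the stable manifolds.

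For $x$ close to some $\cW^s_k$, I would use the continuity of $I$ at $(y^\star_k,y^\star_k)$ from Assumption~\ref{ass:LDP} to produce a one-step perturbation, of rate function at most $\eta/2$, that moves the orbit off the local stable manifold and onto (a neighbourhood of) the local unstable manifold of $y^\star_k$. After this single noise-assisted step, the deterministic dynamics carries the orbit into some $\cA_j$ and hence into $B_j$ in a uniformly bounded number of further iterations. Combining the two cases, for every $\eta>0$ and every $x\in\cX$ one exhibits a finite path $x = x_0, x_1,\dots, x_{n_0} \in \MN$ of length $n_0$ uniform in $x\in\cX$, satisfying $I(x_0,\dots,x_{n_0}) \leqs \eta/2$.

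Applying the LDP lower bound~\eqref{eq:ldp_O} to a small tube around this path, and covering $\cX$ by finitely many such tubes to obtain uniformity, gives
\begin{equation*}
 \bigprobin{x}{\tau_\MN\leqs n_0} \geqs \e^{-3\eta/(4\sigma^2)}
\end{equation*}
for all $x\in\cX$ and all sufficiently small $\sigma$. The strong Markov property applied at times $n_0, 2n_0,\ldots$ then yields the geometric tail bound $\probin{x}{\tau_\MN > kn_0}\leqs(1-\e^{-3\eta/(4\sigma^2)})^k$, hence $\expecin{x}{\tau_\MN}\leqs n_0\e^{3\eta/(4\sigma^2)}$, and the prefactor $n_0$ is absorbed into the exponential by shrinking $\sigma_0$. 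The main obstacle is achieving the uniformity of the one-step probability lower bound near the stable manifolds of the unstable fixed points, where the correct noise-assisted escape direction depends on $x$; this is handled by combining the continuity of $I$ at the fixed points with a standard compactness argument on $\cX$.
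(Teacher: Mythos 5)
Your path-construction step --- iterating $\Pi$ inside the basins of attraction, a single noise-assisted step of rate at most $\eta/2$ near the unstable fixed points, then uniformity on $\X$ by compactness --- is essentially the argument the paper gives (phrased there in terms of $\omega$-limit sets rather than basins and stable manifolds). The place where you genuinely diverge is the final step, and there is a gap. You apply the strong Markov property at times $n_0, 2n_0, \ldots$ to obtain the geometric tail bound $\probin{x}{\tau_\MN > kn_0}\leqs\bigpar{1-\e^{-3\eta/(4\sigma^2)}}^{k}$. This requires the one-block lower bound $\probin{y}{\tau_\MN\leqs n_0}\geqs\e^{-3\eta/(4\sigma^2)}$ to hold at the random location $y=X_{kn_0}$. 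But that location need not lie in $\X$: the chain lives on $\X_0$, and after $n_0$ steps it may well have escaped $\X$, where the deterministic orbit structure and your LDP tube argument give no control. The lower bound is established only for $y\in\X$, so the geometric bound as written is not justified. The paper circumvents exactly this by combining the three-set inequality \eqref{eq:three_set_expectation} (applied at the level of the diluted process) with the finiteness of $\expecin{\X}{\tau^+_\X}$ from Assumption~\ref{ass:REC}, obtaining
\begin{equation*}
\bigexpecin{\X}{\tau^+_\MN}\;\leqs\; \frac{n_0\,\bigexpecin{\X}{\tau^+_\X}}{\bigprobin{\X}{\tau^+_\MN\leqs n_0}}\;,
\end{equation*}
so the cost of excursions out of $\X$ is absorbed into $\expecin{\X}{\tau^+_\X}$. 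A natural fix for your version would be to run the geometric argument for the trace process $\ptrace{(X_n)}{\X}$ (which cannot leave $\X$) and then pass back to real time by multiplying by $\expecin{\X}{\tau^+_\X}$; that is, in effect, what the three-set inequality accomplishes.

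A smaller imprecision worth flagging: near an unstable fixed point $y^\star_k$ you say you steer onto (a neighbourhood of) the local unstable manifold. Points on the unstable manifold can themselves have an unstable $\omega$-limit if heteroclinic orbits exist, which Assumption~\ref{ass:DET} does not rule out. The paper instead chooses a nearby point whose $\omega$-limit is a \emph{stable} fixed point, which exists because the union of the stable manifolds of the unstable fixed points has empty interior; your construction should be phrased the same way to avoid needing a further chain of noise-assisted steps.
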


We will however see that in many practical situations, it is 
possible to show that $E_\cX(\sigma)$ is much 
smaller, typically of order $\log(\sigma^{-1})$, which yields better spectral 
gap estimates.


\subsection{Trace process} 
\label{ssec:setup_trace}

A very important process is going to be the \emph{trace process} on a 
recurrent set $A\in\cS_0$ (i.e., such that $\bigprobin{x}{\tau^+_A < 
\infty} = 1$ for all $x\in A$). 

\begin{definition}[Trace process]
Let $A$ be a positive recurrent set. The \emph{trace process} on $A$ is defined 
as the Markov chain monitored only when staying in $A$. Its transition kernel 
is given by 
\begin{equation}
 \ptrace{K_\sigma}{A}(x,B)
 = \bigprobin{x}{X_{\tau^+_A}\in B}
\end{equation} 
for any $B\in\cS_0$. We denote this process by $\ptrace{(X_n)}{A}_{n\geqs0}$.
\end{definition}

Note that owing to the strong Markov property, $\ptrace{K_\sigma}{A}$ is a 
markovian kernel, meaning that $\ptrace{K_\sigma}{A}(x,A) = 1$ for all $x\in A$. 
Since $A$ is recurrent, it is also a stochastic kernel on 
$A$. It can be rewritten in the form 
\begin{equation}
 \ptrace{K_\sigma}{A}(x,B) = \sum_{n\geqs 1} 
 \bigprobin{x}{\tau^+_A = n, X_n\in B}\;,
\end{equation} 
and thus for $\sigma>0$ it admits the density 
\begin{equation}
\label{eq:density_trace} 
 \ptrace{k_\sigma}{A}(x,y) = \sum_{n\geqs 1} 
 \bigprobin{x}{\tau^+_A = n} k_\sigma^n(x,y) \ind{x\in A, y\in A}\;.
\end{equation} 
Assume from now on that $A$ is positive recurrent (i.e., 
$\bigexpecin{x}{\tau^+_A} < \infty$ for all $x\in A$). 
Applying~\eqref{eq:pi_ergodicity} to $f = \indicator{B}$ for $B\subset A$, we 
obtain 
\begin{equation}
\label{eq:pi_trace} 
 \pi_0(B) 
 = \int_A \pi_0(\6x) \Biggexpecin{x}{\sum_{n=1}^{\tau^+_A} \ind{X_n\in B}}
 = \int_A \pi_0(\6x) \bigprobin{x}{X_{\tau^+_A} \in B}\;,
\end{equation} 
showing that the restriction of $\pi_0$ to $A$ is invariant under the trace 
process. It follows that the measure $\ptrace{\pi_0}{A}$ defined by 
\begin{equation}
 \ptrace{\pi_0}{A}(B) = \frac{\pi_0(B)}{\pi_0(A)}
 \qquad 
 \forall B \in \cS_0 \colon B \subset A
\end{equation} 
is an invariant probability measure of the trace process on $A$. 

\begin{remark}[Transitivity of the trace]
One easily checks that the trace enjoys the following transitivity property: 
if $B\subset A$, then $\ptrace{(\ptrace{(X_n)}{A})}{B}_{n\geqs0} = \ptrace{(X_n)}{B}_{n\geqs0}$.
\end{remark}

It will be more convenient to work with kernels defined on a bounded set. 
This can be achieved by considering, instead of the original kernel $K_\sigma$, 
the kernel $\ptrace{K_\sigma}{\X}$ of the trace process on the bounded set 
$\cX$, which contains essentially the same dynamic information owing to 
\Cref{ass:REC}. 

To lighten the notation, we will from now on simply write $K$ instead of 
$\ptrace{K_\sigma}{\X}$, the parameter $\sigma>0$ being always fixed at a 
sufficiently small value. The density of $K$, denoted by $k$, is continuous 
and strictly positive in $\X$ by \Cref{ass:REC}. The Borel $\sigma$-algebra of 
$\X$ will be denoted $\cB(\X)=\cS$, and for any $A\in\cS$ we write $A^c$ 
instead of $\X\setminus A$.

Since $\X$ is bounded and $k$ is continuous, $K$ is a compact operator (that 
is, it maps every closed set in $\cS$ to a relatively compact set, i.e., a set 
with compact closure). The Riesz--Schauder 
theorem~\cite[Thm.~VI.15]{Reed_Simon_I} ensures that $K$ has discrete 
spectrum, with all eigenvalues except possibly $0$ having finite multiplicity. 
The eigenvalues are roots of the Fredholm determinant, introduced 
in~\cite{fredholm1903classe}. Jentzsch's extension of the Perron--Frobenius 
theorem~\cite{Jentzsch1912} states that the eigenvalue of largest module is 
real and positive, and that the associated eigenfunctions can be taken real and 
positive as well. 

We will denote by $(\lambda_i)_{i\in\N_0}$ the eigenvalues of $K$, ordered by 
decreasing modulus, and by $\pi_i$ and $\phi_i$ the left and right 
eigenfunctions, that is 
\begin{equation}
 (\pi_i K)(x) = \lambda_i \pi_i(x) 
 \qquad \text{and} \qquad 
 (K\phi_i)(x) = \lambda_i \phi_i(x)
\end{equation} 
for all $i\in\N_0$. We normalise the eigenfunctions in such a way that 
\begin{equation}
\label{eq:normalisation} 
 \pi_i(\phi_j) 
 := \int_\X \pi_i(x)\phi_j(x) \6x = \delta_{ij}\;,
\end{equation} 
which implies that the kernels with density $\phi_i(x)\pi_i(y)$ are projectors 
on invariant subspaces of $K$. In case the set of eigenfunctions is complete and 
all nonzero eigenvalues have equal algebraic and geometric multiplicity, we have 
the spectral decomposition 
\begin{equation}
 k^n(x,y) = \sum_{i\geqs0} \lambda_i^n \phi_i(x)\pi_i(y) 
 \qquad
 \forall n\in\N\;.
\end{equation} 
If some geometric multiplicities are smaller than the corresponding algebraic 
multiplicities, this decomposition will contain nontrivial Jordan blocks. 

Since $K$ is stochastic ($K(x,\X)=1$ for all $x\in\X$), we have in particular 
$\lambda_0 = 1$, while $\pi_0$ is the density of the invariant distribution of 
the process, and $\phi_0$ is identically equal to $1$. In what follows, we will 
usually identify signed measures and their density.


\subsection{Killed process and QSDs} 
\label{ssec:setup_killed}

Given $A\in\cS$, we denote by $K_A$ the kernel of the process 
$(X^A_n)_{n\geqs0}$ killed upon leaving $A$. Its density has the 
expression
\begin{equation}
 k_A(x,y) = k(x,y)\ind{x\in A,y\in A}\;.
\end{equation} 
If $A^c$ has positive Lebesgue measure, this is a substochastic process, which 
can be turned into a stochastic process on $A\cup\bigset{\partial}$, where 
$\partial$ denotes a cemetery state. The killing time of the process is given 
for all $x\in A$ by $\tau_\partial(x) = \tau_{A^c}(x) = \tau^+_{A^c}(x)$. 

Fredholm theory also applies to $K_A$, and we denote its eigen-elements 
by $\lambda^A_i$, $\pi^A_i$ and $\phi^A_i$. A major difference in the 
substochastic case is that the \emph{principal eigenvalue} $\lambda^A_0$ is 
strictly smaller than $1$. 
The left eigenfunction $\pi^A_0$ is a \emph{quasiergodic distribution} (QED) 
of the process, meaning that it satisfies 
\begin{equation}
 \bigpcondin{\pi^A_0}{X^A_n \in B}{\tau_{A^c} > n} = \pi^A_0(B) 
 \qquad \forall B \in \cS, \;\forall n\in\N\;.
\end{equation} 
It can also be checked that the killing time, when starting in the QED, is 
geometrically distributed with success probability $(1-\lambda^A_0)$, that is, 
\begin{equation}
 \bigprobin{\pi^A_0}{\tau_{A^c} = n} 
 = (\lambda^A_0)^{n-1}(1-\lambda^A_0) 
 \qquad \forall n\in\N\;
 \qquad \text{and} \qquad 
 \bigexpecin{\pi^A_0}{\tau_{A^c}} = \frac{1}{1-\lambda^A_0}\;. 
\end{equation}
If the spectral-gap condition $\abs{\lambda^A_1} < \lambda^A_0$ is satisfied, 
then one also has 
\begin{equation}
\label{eq:QSD_convergence} 
 \lim_{n\to\infty} 
 \bigpcondin{x}{X^A_n \in B}{\tau_{A^c} > n} = \pi^A_0(B) 
\end{equation} 
for all $x\in A$ and all $B\in\cS$, meaning that $\pi^A_0$ is also a \emph{quasistationary distribution} (QSD). We refer 
to~\cite{Collet_Martinez_SanMartin_book, Bianchi_Gaudilliere_2016, 
Champagnat_Villemonais_16, GLPN_2016, Champagnat_Villemonais_23} for 
proofs and further details on QSDs.


\subsection{Uniform positivity} 
\label{ssec:setup_positivity}

The last assumption we need is a form of ergodicity condition, which is 
a particular case of the uniform positivity condition used 
in~\cite{Birkhoff1957}, and a variant of Doeblin's condition for Markov chains 
suitable for substochastic processes (see also~\cite{Hairer_Mattingly_11} for 
related results). 

\begin{definition}[Uniform positivity condition]
We say that a (sub)stochastic Markov kernel $K_A$ on $A$ with density $k_A$ 
satisfies a \emph{uniform positivity condition} with parameters $n\in\N$ and 
$L>1$ if 
\begin{equation}
\label{eq:UPC} 
 \sup_{x\in A} k^n_A(x,y)\leqs
 L \inf_{x\in A} k^n_A(x,y)
\end{equation} 
holds for all $y\in A$. 
\end{definition}

\begin{remark}
A more general uniform positivity condition one encounters in the literature 
is that $s(x) \nu(B) \leqs K^n(x,B) \leqs L s(x) \nu(B)$ for a positive 
function $s$ and a positive measure $\nu$. The form we use here corresponds to 
a constant $s$, which is sufficient for our purposes since we are going to 
apply it to sets $A$ on which $K^n(x,\cdot)$ is bounded below. 
\end{remark}

We will only need uniform positivity to hold for certain trace processes 
killed upon hitting some metastable sets. More precisely, given $1\leqs i \leqs 
N$, let $\ptrace{K_{\sigma,B_i}}{\MN}$ be the kernel of the trace 
process on $\MN$, killed when in hits $\MN\setminus B_i$ (which is 
equivalent to the trace process leaving $B_i$). 

\begin{assumptionName}{POS}[Uniform positivity]
\label{ass:POS} 
There exist a constant $L\in(1,2)$, independent of $\sigma$, and an integer 
$n_0(\sigma)$, such that for each $1\leqs i \leqs N$, the kernel 
$\ptrace{K_{\sigma,B_i}}{\MN}$ satisfies a uniform positivity condition on 
$B_i$ with parameters $n_0(\sigma)$ and $L$. Furthermore, for any $\eta > 0$, 
there exists $\sigma_0(\eta) > 0$ such that 
\begin{equation}
 n_0(\sigma) \leqs \e^{\eta/\sigma^2}
\end{equation} 
holds for all $\sigma \in(0,\sigma_0]$. 
\end{assumptionName}

At first glance, it might seem difficult to prove that such a condition holds. 
In practice, however, we will often be in the following situation. We have a bad 
upper bound on the oscillation of $x\mapsto\ptrace{k_{\sigma,B_i}}{\MN}(x,y)$ 
valid on the whole domain (typically, this bound has order $\e^{C/\sigma^2}$ for 
some $C>0$), but we also have a much smaller bound, uniform in $\sigma$, when 
$x$ is only allowed to vary on a small ball, typically of radius $\sigma^2$. The 
two bounds can then be combined into a much better one by using a 
coupling argument, see Proposition~\ref{prop:coupling} in Appendix~\ref{ssec:proof_pos_gaussian_map}.


\section{Main results} 
\label{sec:main_result}

We assume throughout this section, without further mention, that the kernel $K = \ptrace{K_\sigma}{\X}$ satisfies Assumptions~\ref{ass:DET}, \ref{ass:LDP}, \ref{ass:REC} and \ref{ass:POS}. Our first main result concerns the spectrum of $K$. We give its proof in Section~\ref{sec:proof_spectral_gap}. 

\begin{proposition}[Spectral gap estimate]
\label{prop:spectral_gap} 
For any $\eta > 0$, there exist $\sigma_0 > 0$ and $\delta_0 > 0$ such that, 
if $\sigma\in(0,\sigma_0]$ and the diameter of the sets $B_i$ is bounded by $\delta_0$, 
then the kernel $K$ has exactly $N$ eigenvalues outside the disc $\setsuch{\lambda\in\C}{\abs{\lambda} \leqs \varrho}$, where 
\begin{equation}
 \varrho = \exp\biggset{-\frac{\log2}{4 E_\X(\sigma)}}\;.
\end{equation} 
Furthermore, these $N$ eigenvalues all belong to the disc of radius $\e^{-[H_0 - \eta]/\sigma^2}$, centred in $1$. 
\end{proposition}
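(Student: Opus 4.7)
My plan is to pass to the trace process $\widehat K := \ptrace{K}{\MN}$ on the metastable set $\MN = \bigcup_{i=1}^N B_i$. Since $\MN\subset\X$ is compact and $K$ has a strictly positive continuous density, $\widehat K$ is again a compact stochastic kernel on $\MN$. Standard trace-process theory relates the spectrum of $\widehat K$ to that of $K$ outside the spectrum of the kernel $K_{\X\setminus\MN}$ (the trace process killed upon entering $\MN$); Assumption~\ref{ass:REC} and the LDP estimates of Proposition~\ref{prop:ldp_EMN} will show that this latter spectrum contributes only eigenvalues of modulus already below $\varrho$. It therefore suffices to prove the proposition for $\widehat K$, and the argument splits into two independent parts: (a) producing $N$ eigenvalues exponentially close to $1$, and (b) bounding all other eigenvalues of $\widehat K$ in modulus by $\varrho$.

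\emph{Part (a).} For each $i=1,\dots,N$, I use the killed trace kernel $\ptrace{K_{\sigma,B_i}}{\MN}$ of Assumption~\ref{ass:POS}. Fredholm theory yields a principal eigenvalue $\lambda^{B_i}_0 < 1$ with QSD $\pi^{B_i}_0$ supported on $B_i$. The defect satisfies
\begin{equation*}
 1-\lambda^{B_i}_0 \;=\; \bigprobin{\pi^{B_i}_0}{X_{\tau^+_\MN}\in\MN\setminus B_i}
 \;\leqs\; \sum_{j\neq i}\sup_{x\in B_i}\bigprobin{x}{\tau^+_{B_j}<\tau^+_{B_i}},
\end{equation*}
which by Proposition~\ref{prop:ldp_committor} is bounded by $\e^{-(H_0-\eta/2)/\sigma^2}$ once $\sigma$ and $\delta$ are small enough. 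Extending each $\pi^{B_i}_0$ by $0$ outside $B_i$ yields $N$ linearly independent measures (disjoint supports) satisfying $\pi^{B_i}_0\widehat K = \lambda^{B_i}_0\pi^{B_i}_0 + \nu_i$, where $\nu_i$ is supported on $\MN\setminus B_i$ and has total mass of the same exponentially small order. Representing $\widehat K$ in the $N$-dimensional subspace they span thus gives a matrix of the form $\diag(\lambda^{B_1}_0,\dots,\lambda^{B_N}_0)$ plus an exponentially small perturbation, and a Bauer--Fike argument produces $N$ eigenvalues of $\widehat K$ in the disc $\{|\lambda-1|\leqs\e^{-(H_0-\eta)/\sigma^2}\}$.

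\emph{Part (b).} I show that some iterate $\widehat K^m$, with $m$ a fixed multiple of $E_\X(\sigma)$, acts as a Doeblin-type contraction transverse to the span of the $\pi^{B_i}_0$. Two ingredients combine. First, Assumption~\ref{ass:REC} and Markov's inequality ensure that the original chain returns to $\X$, and (using Proposition~\ref{prop:ldp_committor} together with the attractivity of $\MN$ coming from Assumption~\ref{ass:DET}) subsequently reaches some $B_i$, within $O(E_\X(\sigma))$ steps with probability at least $1/2$. Second, Assumption~\ref{ass:POS} guarantees that after $n_0(\sigma)\leqs\e^{\eta/\sigma^2}$ further trace steps spent inside $B_i$, the conditional distribution on $B_i$ lies within a factor $L<2$ of $\pi^{B_i}_0$, so a fraction at least $1/L$ of its mass is a multiple of $\pi^{B_i}_0$. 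Chaining these two coupling steps over $m=\lceil 4E_\X(\sigma)\rceil$ iterations of $\widehat K$ yields a decomposition $\widehat K^m\mu = (1-\epsilon)\sum_i c_i\pi^{B_i}_0 + \epsilon\rho$ with $\epsilon\leqs 1/2$ for every probability measure $\mu$. For an eigenmeasure $v$ of $\widehat K$ with eigenvalue $\lambda$ that is $L^1$-orthogonal to the span of the $\pi^{B_i}_0$, this gives $|\lambda|^m\|v\|\leqs\tfrac12\|v\|$, hence $|\lambda|\leqs 2^{-1/(4E_\X(\sigma))} = \varrho$.

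The main obstacle I expect is the quantitative calibration of part (b): the sub-exponential slack introduced by $n_0(\sigma)$ and by the committor estimates (both potentially of size $\e^{\eta/\sigma^2}$) has to be absorbed cleanly into the factor $1/4$ in the exponent of $\varrho$, without polluting the leading $\log 2/E_\X(\sigma)$ behaviour; this likely requires choosing $\eta$ much smaller than the ambient parameter and tracking constants through the coupling carefully. Additional bookkeeping is needed to transfer the spectral statement from $\widehat K$ on $\MN$ back to $K$ on $\X$ and to rule out stray eigenvalues in the annulus between $\e^{-(H_0-\eta)/\sigma^2}$ and $\varrho$.
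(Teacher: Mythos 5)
The proposition is about the kernel $K = \ptrace{K_\sigma}{\X}$ on $\X$, but your part~(b) bounds the spectral gap of the trace $\widehat K = \ptrace{K}{\MN}$ on $\MN$; these are genuinely different operators with different spectra, and the transfer is not given by ``standard trace-process theory'' in the way you invoke it. The paper goes through a Feynman--Kac characterization (from~\cite{BaudelBerglund_2017}): eigenvalues $\e^{-u}$ of $K$ outside a disc are in bijection with nonlinear eigenvalues of a family $K^u$ on $\MN$ built from the Laplace transform $\expecin{x}{\e^{u(\tau^+_\MN-1)}\ind{X_{\tau^+_\MN}\in\cdot}}$, and the radius $\varrho = 2^{-1/(4E_\X(\sigma))}$ is precisely the convergence radius of that Laplace transform (via Markov's inequality on $\tau^+_\MN$), not a Doeblin rate for $\widehat K$. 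Your Doeblin bound on $\widehat K$ measures decay per \emph{trace} step, which has no controlled relation to decay per step of the chain on $\X$: one trace step can swallow many ordinary steps.

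There is a second, quantitative gap. Your contraction is chained over $m=\lceil 4 E_\X(\sigma)\rceil$ iterations of $\widehat K$, yet the positivity Assumption~\ref{ass:POS} only kicks in after $n_0(\sigma)$ trace steps spent inside a single $B_i$, and $n_0(\sigma)$ is not controlled in terms of $E_\X(\sigma)$ (the paper only guarantees $n_0(\sigma)\leqs\e^{\eta/\sigma^2}$). The paper's own proof sidesteps this by choosing the auxiliary power $m$ to be $\max\{m_0,\, H_0 n_0(\sigma)/\log\delta^{-1}\}/\sigma^2$, vastly larger than $4E_\X(\sigma)$, and then using that eigenvalue counts for $K^u$ are insensitive to raising to a power; the number $4E_\X(\sigma)$ never appears as a number of iterations in a coupling. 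As written, your $m=\lceil 4E_\X\rceil$ iterations simply do not contain $n_0(\sigma)$ in-$B_i$ trace steps, so the asserted $\epsilon\leqs 1/2$ is not reached. A related but smaller issue: part~(a) applies a finite-dimensional Bauer--Fike bound to a compression of an infinite-dimensional compact operator onto a non-invariant subspace; this needs to be rephrased as a Riesz-projector/resolvent comparison (as the paper does via Lemma~\ref{lem:resolvent}) in order to actually count eigenvalues of $K$ rather than eigenvalues of the compression.

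Your high-level intuition --- QSDs on the $B_i$ supply $N$ approximate eigenmodes near $1$, and recurrence plus uniform positivity control the rest --- matches the paper's picture. But the paper's intermediary objects ($K^u$, the finite-rank kernel $K^\star$ built from the trace-process QSDs, and the operator-norm comparisons $\norm{(K^u)^m-(K^0)^m}$ and $\norm{(K^0)^m-(K^\star)^m}$ with the determinant-based counting lemma) are not optional conveniences: they are what make the eigenvalue count for $K$ on $\X$ well-defined and what allow $n_0(\sigma)$ and $E_\X(\sigma)$ to be decoupled.
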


\begin{remark}[Sharper estimates on the $N$ first eigenvalues]
In~\cite{BaudelBerglund_2017}, we obtained sharper estimates on the $N$ largest eigenvalues, 
in terms of committor functions between the $B_i$, under a more restrictive condition on the $H(i,j)$. The condition requires that the $B_i$ can be ordered in such a way that 
\begin{equation}
 \min_{j < i} H(i,j) 
 \leqs \min_{k < i} \min_{j \leqs i, j\neq k} H(k, j) - \theta 
 \qquad 
 \forall i\in\set{1,\dots,N}
\end{equation} 
holds for some $\theta > 0$. Since we do not make this assumption here, it is necessary to give 
a new proof of Proposition~\ref{prop:spectral_gap} in the current situation. The proof uses however 
the same tools as in~\cite{BaudelBerglund_2017}. 
\end{remark}

One consequence of Proposition~\ref{prop:spectral_gap} is that the variables of the sequence $(X_n)_{n\geqs0}$ will be at distance decreasing like $\varrho^n$ from a sequence $((\trunc{X})_n)_{n\geqs0}$, generated by the truncated kernel $\trunc{K}$, obtained by projecting $K$ on the space associated with its $N$ largest eigenvalues. This truncated kernel is given, in the basis of eigenfunctions of $K$, by a matrix of size $N$. However, it is not immediatly clear how this approximate sequence relates to the sequence of visited $B_i$. The following approximation result clarifies that point.

\begin{theorem}[Approximation by a finite Markov chain]
\label{thm:reduction} 
There exist constants $C, \theta_0 > 0$ such that the following holds for all $\theta \in (0,\theta_0]$. Let $i\in\set{1,\dots N}$, and let $m = m(\sigma)$ satisfy 
\begin{equation}
 \lim_{\sigma\to 0} \sigma^2 \log(m(\sigma)) = \theta\;.
\end{equation} 
Then for any $\eta > 0$, there exist $\sigma_0 > 0$ and $\delta_0 > 0$, such that if $\sigma\in(0,\sigma_0]$ and the diameter of the sets $B_i$ is bounded by $\delta_0$, then for any $x \in B_i$, one has 
\begin{equation}
\label{eq:approx_MC} 
 \bigabs{\bigprobin{x}{X_{\tau^{+,nm}_{\MN}} \in B_j} 
 - \bigprobin{i}{Y_n = j}}
 \leqs C\bigpar{\e^{-[\Hhat_{\min}-\eta]/\sigma^2}
 + \varrho^{nm}}
\end{equation} 
for all $n\in\N$ and all $j\in\set{1,\dots N}$. Here $\Hhat_{\min}$ is a constant satisfying  $\Hhat_{\min} \geqs H_0 - (N-1)\theta$, and $(Y_n)_{n\geqs0}$ is 
a Markov chain with transition matrix $P$, whose matrix elements satisfy 
\begin{equation}
\label{eq:def_Pij} 
 P_{ij} = 
 \bigprobin{\QSD{i}}{X_{\tau^{+,m}_{\MN}}\in B_j}
 \bigbrak{1+\bigOrder{\e^{-[\theta -\eta]/\sigma^2}}}\;, 
\end{equation} 
where $\QSD{i}$ is the QSD of the trace process $\ptrace{(X_n)}{\MN}$ killed when leaving $B_i$.
Furthermore, these matrix elements satisfy
\begin{equation} 
\label{eq:ldp_Pij} 
\e^{-[H(i,j)-p(i,j)\theta +\eta]/\sigma^2}
 \leqs P_{ij}
 \leqs \e^{-[H(i,j)-p(i,j)\theta-\eta]/\sigma^2}\;,
\end{equation}
where the $p(i,j)$ are integers in $\set{0,\dots,N-1}$.  
\end{theorem}

We give the proof in Section~\ref{sec:proof_reduction}, which also contains more precise information on the constants $\theta_0$ and $\Hhat_{\min}$. 

Note that the result applies to time-diluted versions of the trace process on $\MN$, by a factor $m$ scaling like $\e^{\theta/\sigma^2}$. The reason for the time dilution is that it gives the process a chance to approach the local QSD once it has reached one of the $B_i$, before jumping to another $B_j$.

Relation~\eqref{eq:ldp_Pij} is a direct consequence of the large-deviation 
principle, where $p(i,j)$ represents the length of the optimal path from $i$ to $j$, see 
Lemma~\ref{lem:ldp_msigma} below. 
The main idea of the proof is to construct bases $(\mu_i)_{1\leqs i\leqs N}$ and $(\psi_j)_{1\leqs j\leqs N}$ of the $N$-dimensional vector spaces that are respectively left-invariant and 
right-invariant under the truncated kernel. The basis functions satisfy the orthonormality 
relation $\pscal{\mu_i}{\psi_j} = \delta_{ij}$. Most importantly, Theorem~\ref{thm:approx} below provides the relation
\begin{equation}
\label{eq:Pi_Ynj} 
 \bigprobin{i}{Y_n = j}
 = \int_{\MN} \int_{\MN} 
 \mu_i(x) \Bigprobin{x}{X_{\tau^{+,nm}_{\MN}} \in \6y} \psi_j(y) \6x 
 =: \Bigexpecin{\mu_i}{\psi_j \pth{X_{\tau^{+,nm}_{\MN}}}}\;,
\end{equation} 
where $\tau^{+,n}_{\MN}$ is the $n$th return time to the metastable set $\MN$. 
We emphasize that the measure $\mu_i$ in~\eqref{eq:Pi_Ynj} is in general a signed measure, because it is a linear combination of eigenfunctions of the truncated kernel. Therefore, the expectation in~\eqref{eq:Pi_Ynj} should be interpreted as a linear combination of expectations with 
respect to the positive and negative parts of $\mu_i$.
The functions $\psi_j$ can also have negative values, though they are close to indicator functions of $B_j$. See Proposition~\ref{prop:norms_mupsi} for details. However, the $P_{ij}$ are indeed positive for $\sigma$ small enough, and $P$ is a stochastic matrix, as proved in Lemma~\ref{lem:P_stoch}.

Relation~\eqref{eq:Pi_Ynj} shows in which way the original and reduced process are coupled. In fact, there exists a linear map $\cL$ from the space of measures on $\X$ to those on $\set{1,\dots,N}$ such that 
\begin{equation}
 \fP^i Y_n^{-1} = \cL\bigpar{\fP^{\mu_i} X_{\tau^{+,nm}_{\MN}}^{-1}}
 \qquad \forall n\in\N_0\;,
\end{equation} 
given by $\cL(\mu_j) = \delta_j$. The map $\cL$ is of course highly non-injective, since it maps an infinite-dimensional space to a space of dimension $N$. Its kernel is a complement of the space of measures spanned by $\mu_1, \dots, \mu_N$, given by the space of measures $\mu$ such that $\expecin{\mu}{\psi_j} = 0$ for $j=1, \dots, N$.

Relation~\eqref{eq:approx_MC} shows in which sense the sequence of visited balls $B_i$ is close to the Markov chain $(Y_n)_{n\geqs0}$. Note that the error term $\varrho^{nm}$ converges to $0$ as $n$ increases. It actually becomes negligible as soon as 
\begin{equation}
 n \geqs \Hhat_{\min} \frac{E_\X(\sigma)}{\sigma^2 m(\sigma)}\;,
\end{equation} 
which already happens for $n\geqs1$ if one applies Proposition~\ref{prop:ldp_EMN} with $\eta$ small enough. 

The important part of the error term in~\eqref{eq:approx_MC} is thus given by 
$C\e^{-[\Hhat_{\min}-\eta]/\sigma^2}$. The point is that this error is \emph{uniform} in time $n$. Thus at any given time $n$, we know that the trace process is likely to be in a ball $B_i$ whenever the probability $\bigprobin{i}{Y_n = j}$ is not exponentially small. This information becomes useful on time scales that are long compared to the typical time of transitions between metastable sets. 

The process $(X_{\tau^{+,nm}_{\MN}})_{n\geqs0}$ can thus be approximated, up to an exponentially small error that is uniform in time, by a Markov chain with transition probabilities $P_{ij}$. Note that the error in the expression~\eqref{eq:def_Pij} for these probabilities is multiplicative. Our analysis does not provide more explicit expressions for these transition probabilities than the large-deviation estimate in Proposition~\ref{prop:Px_LDP}, but it shows that is is sufficient to know the probabilities of hitting the different balls $B_j$ when starting in the QSD on each $B_i$. One may hope that future development of the theory will provide sharper estimates.  


\section{Applications} 
\label{sec:applic} 

In this section, we show that most of the main assumptions are automatically satisfied for the two main applications we have in mind, namely randomly perturbed iterated maps, and random Poincar\'e maps. 


\subsection{Iterated maps with additive noise} 
\label{ssec:mapnoise}

Let $\X_0 = \R^d$ and consider the Markov chain given by 
\begin{equation}
 X_{n+1} = \Pi(X_n) + \sigma \xi_{n+1}\;,
\end{equation} 
where $\Pi:\R^d\to\R^d$ satisfies \ref{ass:DET}, and the $\xi_n$ are i.i.d.\ 
random variables taking values in $\R^d$. A typical example would be that the 
$\xi_n$ are centred, normal random variables with positive definite covariance 
matrix $\Sigma$ (that is, we assume $c_-\norm{\zeta}^2 \leqs 
\pscal{\zeta}{\Sigma\zeta} c_+ \leqs \norm{\zeta}^2$ for all $\zeta\in\R^d$, 
where $c_+\geqs c_- > 0$). 

The transition kernel of the chain $(X_n)_{n\geqs0}$ is given by 
\begin{equation}
 K_\sigma(x,A) = \bigprob{\Pi(x)+\sigma\xi_1 \in A}
 = \bigprob{\sigma\xi_1 \in A - \Pi(x)}
 \qquad 
 \forall A\in\cS_0\;.
\end{equation} 
We now examine the four assumptions one by one.

\subparagraph{\Cref{ass:DET}.} 

The existence of a set $\cX$ invariant under 
the map $\Pi$ is a classical growth condition that holds true for many 
discrete-time dynamical systems. Let us assume for simplicity that $\cX$ can be 
taken as a ball $\cB(R_0) = \setsuch{x\in\R^d}{\norm{x}<R_0}$. For later use, 
we shall make the somewhat stronger assumption that $\Pi$ maps any ball 
$\cB(R)$ of radius $R\geqs R_0$ into a smaller ball, namely there exists 
$\varepsilon_0>0$ such that 
\begin{equation}
\label{eq:map_contraction} 
 \norm{\Pi(x)}^2 \leqs \norm{x}^2 - \varepsilon_0 
 \qquad \forall x \colon \norm{x} \geqs R_0\;.
\end{equation} 
Checking the conditions on limit sets is in general no easy task, as it 
requires a good understanding of fixed points and periodic orbits, their basins 
of attraction, and their stable and unstable manifolds. However they are known 
to hold for a number of systems. See for 
instance~\cite{CoutinhoFernandezLimaMeyroneinc} for a non-trivial dynamical 
systems arising from genetic regulatory networks. 

\subparagraph{\Cref{ass:LDP}.} 

Assume the random variable $\xi_1$ satisfies a large-deviation principle with 
good rate function $I_0$. Then it is immediate to see that $K_\sigma(x,\cdot)$ 
satisfies~\eqref{eq:ldp_O} and~\eqref{eq:ldp_C} with the rate function 
\begin{equation}
 I(x,y) = I_0(y - \Pi(x))\;.
\end{equation} 
We see that $I$ vanishes only if $y=\Pi(x)$ provided $I(x)>0$ for $x\neq 0$, 
and is continuous at fixed points whenever $I_0$ is continuous at $0$. 

In particular, if $\xi_1$ has a centred normal distribution with covariance 
matrix $\Sigma$, then 
\begin{equation}
\label{eq:I_normal} 
 I(x,y) = \frac12 \pscal{y-\Pi(x)}{\Sigma^{-1}(y-\Pi(x))}
\end{equation} 
satisfies all required properties. 

\subparagraph{\Cref{ass:REC}.}

Assume $\xi_1$ has a continuous density $p$. Then we see that $K_\sigma$ admits 
the density 
\begin{equation}
 k_\sigma(x,y) = \frac{1}{\sigma^d} p\Biggpar{\frac{y-\Pi(x)}{\sigma}}\;,
\end{equation} 
as required. Furthermore, taking $U(x) = \norm{x}^2$ as Lyapunov function, we 
obtain 
\begin{align}
(K_\sigma U)(x) 
&= \bigexpecin{x}{\norm{\Pi(x) + \sigma\xi_1}^2} \\
&= \norm{\Pi(x)}^2 + 2 \sigma\pscal{\Pi(x)}{\expec{\xi_1}} 
+ \sigma^2 \expec{\norm{\xi_1}^2}\;.
\end{align}
Thus if we assume that $\xi_1$ has zero mean and its components have bounded 
variance, it follows from~\eqref{eq:map_contraction} that the discrete drift 
condition~\eqref{eq:Lyapunov} is satisfied provided $\sigma^2 < 
\varepsilon_0\bigbrak{\expec{\norm{\xi_1}^2}}^{-1}$.  

These properties clearly hold in the case of Gaussian $\xi_i$, for which the 
density is
\begin{equation}
\label{eq:gaussian_density} 
 k_\sigma(x,y) = \frac{1}{\cN} \e^{-I(x,y)/\sigma^2}\;, 
 \qquad 
 \cN = (2\pi \sigma^2)^{d/2}(\det\Sigma)^{1/2}
\end{equation} 
with $I$ given by~\eqref{eq:I_normal}.

\subparagraph{\Cref{ass:POS}.} 

In the case where $\xi_1$ follows a normal law, the following result based on 
the coupling argument in Proposition~\ref{prop:coupling} shows that the positivity 
condition holds for sufficiently small diameter of the $B_i$. 

\begin{proposition}[Positivity for Gaussian noise]
\label{prop:pos_gaussian_map} 
Assume $\xi_1$ follows a centred, normal law with positive definite covariance 
matrix $\Sigma$. Then there exist $\delta_0,\sigma_0>0$ such that, if the $B_i$ 
have a diameter bounded by $\delta_0$ and $0<\sigma<\sigma_0$, then 
\Cref{ass:POS} is satisfied for $n_0(\sigma)$ of order $\log(\sigma^{-1})$.  
\end{proposition}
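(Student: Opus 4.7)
The plan is to invoke Proposition~\ref{prop:coupling}, which reduces the uniform positivity condition~\eqref{eq:UPC} to two ingredients: a local comparability of the density when the two starting points lie within a ball of radius $c\sigma^2$, and a coupling result showing that two trajectories can be driven within that distance after $n_0$ steps with high probability. I will verify both using the explicit Gaussian form of the one-step density~\eqref{eq:gaussian_density} together with the linear stability of the fixed point $x^\star_i$.

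\textbf{Local comparability.} A direct algebraic manipulation of the quadratic form gives
\begin{equation*}
 I(x,y) - I(x',y)
 = \pscal{\Pi(x')-\Pi(x)}{\Sigma^{-1}\bigpar{y-\tfrac12(\Pi(x)+\Pi(x'))}}\;.
\end{equation*}
Since $\Pi\in\cC^2$ and $\Pi(x),\Pi(x'),y$ all lie in the bounded set $\MN$, this yields $|I(x,y)-I(x',y)|\leqs C_1\normn{x-x'}$ with $C_1$ depending only on $\Pi$, $\Sigma$ and $\diam(\MN)$. Hence for $\normn{x-x'}\leqs c\sigma^2$,
\begin{equation*}
 \frac{k_\sigma(x,y)}{k_\sigma(x',y)} \leqs \e^{C_1 c}\;,
\end{equation*}
which can be made arbitrarily close to $1$ by choosing $c$ small. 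Since $\Pi(B_i)\subset B_i$ forces $\tau^+_\MN = 1$ with overwhelming probability when starting in $B_i$, the same bound transfers (up to exponentially small corrections) to the density of the trace kernel $\ptrace{k_{\sigma,B_i}}{\MN}$ through the series~\eqref{eq:density_trace}.

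\textbf{Contraction coupling.} Linear asymptotic stability of $x^\star_i$ means that $\partial_x\Pi(x^\star_i)$ has spectral radius $<1$, so by a standard adapted-norm argument there exist a norm $\normn{\cdot}_*$ and $\rho_i<1$ with $\normn{\Pi(x)-\Pi(x')}_*\leqs\rho_i\normn{x-x'}_*$ on $B_i$, provided $\delta_0$ is small enough. Running two chains $(X_n),(X'_n)$ starting at $x,x'\in B_i$ under the synchronous coupling $\xi'_n=\xi_n$ makes the noise cancel in $X_n-X'_n=\Pi(X_{n-1})-\Pi(X'_{n-1})$, so as long as both paths remain in $B_i$,
\begin{equation*}
 \normn{X_n-X'_n}_* \leqs \rho_i^n\delta_0\;.
\end{equation*}
Choosing $n_0(\sigma) = \bigl\lceil\log(\delta_0/(c\sigma^2))/\log(1/\rho_i)\bigr\rceil = \Order{\log(\sigma^{-1})}$ brings the two paths within distance $c\sigma^2$. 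The event that either chain escapes $B_i$ during those $n_0$ steps has probability at most $n_0\e^{-c_0/\sigma^2}$ for some $c_0>0$, by~\eqref{eq:ldp_C} or a direct Gaussian tail bound, and is therefore negligible compared to the lower bound produced in Step~1. Plugging both ingredients into Proposition~\ref{prop:coupling} yields the required uniform positivity constant $L=L(c)\in(1,2)$ for $c$ small enough.

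\textbf{Main obstacle.} The delicate point is that the positivity condition concerns the \emph{iterated trace kernel} $[\ptrace{K_{\sigma,B_i}}{\MN}]^{n_0}$, whereas the synchronous coupling lives naturally on the ambient chain in $\R^d$. The bridge is that $\Pi(B_i)\subset B_i$ makes the trace kernel and the one-step kernel coincide up to events of probability $\Order{\e^{-c_0/\sigma^2}}$; the quantitative task is to propagate the local comparability constant $\e^{C_1 c}$ through $n_0=\Order{\log(\sigma^{-1})}$ compositions without the accumulated multiplicative errors pushing $L$ above $2$, which is ensured by choosing $c$ small enough at the outset.
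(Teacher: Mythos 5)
Your overall strategy is the same as the paper's: invoke the coupling argument of Proposition~\ref{prop:coupling}, establish a local Harnack-type comparability of the Gaussian one-step density at scale $\Order{\sigma^2}$, and use the synchronous coupling together with the linear stability of $x^\star_i$ to bring the two chains within that distance after $\Order{\log(\sigma^{-1})}$ steps. However, there is a gap in how you control the coupling failure term in the formula~\eqref{eq:L} for $L$.

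Recall that $L$ contains the product $\rho_{n-1} \cdot \sup_{y}\bigpar{\sup_x k_A(x,y)/\inf_x k_A(x,y)}$, where the second factor is the \emph{global} oscillation bound of order $\e^{C/\sigma^2}$ for a constant $C$ depending on $\diam(B_i)$ and $\Sigma$. You bound the escape probability by $n_0\e^{-c_0/\sigma^2}$ and declare it negligible, but you never verify that $c_0 > C$, and in general this fails: both constants scale with $\delta_0$, and the rate $c_0$ (governed by the squared distance from $\Pi(B_i)$ to $\partial B_i$) can easily be smaller than $C$. The paper resolves this by a two-stage construction: it first takes $n_1(\sigma)$ of order $\log(\sigma^{-1})$ so that the coupling succeeds with probability $1 - \Order{n_1\e^{-\kappa/\sigma^2}}$, and then sets $n_0(\sigma) = \ell\, n_1(\sigma)$ with a fixed $\ell$ chosen so that $\ell\kappa > C$, applying the Markov property at multiples of $n_1$ to obtain $\rho_{\ell n_1} \leqs (3 n_1 \e^{-\kappa/\sigma^2})^\ell$. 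Without this iterated-block amplification, the term $\rho_{n_0-1}\e^{C/\sigma^2}$ in the numerator of $L$ need not be small. Your ``Main obstacle'' paragraph misidentifies the delicate point as compounding the local $(1+\eta)$ factor over $n_0$ steps — the coupling argument applies that factor only once, after the paths have merged — whereas the actual obstacle is precisely this rate competition, which your argument does not address.
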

\begin{proof}
See Appendix~\ref{ssec:proof_pos_gaussian_map}. 
\end{proof}

Recall that Proposition~\ref{prop:ldp_EMN} shows that $E_\X(\sigma)$ is 
bounded by any exponential $\e^{\eta/\sigma^2}$ if $\sigma$ and the $B_i$ are 
small enough. In fact, we can do much better, and show that this expectation 
has order $\log(\sigma^{-1})$ if we assume that the deterministic system does 
not admit any heteroclinic cycles.
A \emph{heteroclinic orbit} from an unstable fixed point $z^\star_1$ to 
an unstable fixed point $z^\star_2$ is an orbit whose $\alpha$-limit set is 
equal to $z^\star_1$ and whose $\omega$-limit set is equal to $z^\star_2$. A 
\emph{heteroclinic cycle} between unstable fixed points $z^\star_1, \dots,  
z^\star_n$ is a set of heteroclinic orbits connecting $z^\star_1$ to 
$z^\star_2$, $z^\star_2$ to $z^\star_3$, \dots, $z^\star_{n-1}$ to $z^\star_n$ 
and $z^\star_n$ to $z^\star_1$.  

\begin{proposition}[Expected hitting time of $\MN$]
\label{prop:EMN_gaussian} 
Assume $\xi_1$ follows a centred, normal law with positive definite covariance 
matrix $\Sigma$, and the deterministic dynamical system generated by $\Pi_0$ 
has no heteroclinic cycles. Then there exist constants $c_0, \sigma_0, 
\delta_0>0$ such that 
\begin{equation}
 E_\X(\sigma) \leqs c_0 \log(\sigma^{-1})
\end{equation} 
holds for $0 < \sigma < \sigma_0$ and $0 < \delta < \delta_0$. 
\end{proposition}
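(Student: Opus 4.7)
The plan is to combine a deterministic-to-stochastic tracking argument away from fixed points, a local escape estimate near each unstable fixed point of order $\log(\sigma^{-1})$, and an induction on a height function built from the heteroclinic structure of $\Pi$. The acyclicity assumption enters precisely to make the induction well-founded.

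First, I would introduce the directed graph $G$ whose vertices are the fixed points of $\Pi$ in $\X$, with an edge $z^\star \to \tilde z^\star$ whenever there is a heteroclinic orbit from $z^\star$ to $\tilde z^\star$, or $z^\star$ is unstable and the unstable manifold of $z^\star$ meets the basin of the stable fixed point $\tilde z^\star = x^\star_j$. By \Cref{ass:DET} and the acyclicity assumption, $G$ is a finite directed acyclic graph. Assign to each vertex a height $h$ equal to the length of the longest directed path from it to a sink $x^\star_1, \dots, x^\star_N$, and let $H = \max h < \infty$; the $x^\star_j$ have $h=0$. Choose pairwise disjoint closed neighborhoods $U_i$ of the unstable fixed points $z^\star_i$, small enough that the linearization $A_i = \partial_x\Pi(z^\star_i)$ dominates the dynamics in $U_i$ and $U_i \cap \MN = \emptyset$.

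Next, I would establish two quantitative estimates. The \emph{tracking estimate}: there exists $T < \infty$, depending only on $\X$, $\MN$, and the $U_i$, such that for every $x \in \X \setminus \MN \setminus \bigcup_i U_i$, the deterministic orbit satisfies $\Pi^n(x) \in \MN \cup \bigcup_i U_i$ for some $n \leqs T$; this follows from compactness, continuity of $\Pi$, and the fact that the complement contains no $\omega$-limit points. A Gronwall-type argument using the Gaussian density of $\xi_1$ upgrades this to $\bigprobin{x}{X_n \in \MN \cup \bigcup_i U_i \text{ for some } n \leqs T} \geqs 1 - \e^{-c_1/\sigma^2}$. The \emph{escape estimate}: projecting the chain near $z^\star_i$ onto the unstable subspace of $A_i$ yields a recursion with expansion rate $\rho_i > 1$ driven by Gaussian noise of scale $\sigma$, so a standard analysis shows that $X_n$ exits $U_i$ within at most $c_2 \log(\sigma^{-1})$ steps on average, with tails exponentially small beyond that scale. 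Moreover, the exit distribution concentrates near the unstable manifold of $z^\star_i$, so with probability $1 - \e^{-c_3/\sigma^2}$ the orbit enters the basin of attraction of a fixed point with strictly smaller height.

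The conclusion follows by induction on the height $k$. For $k=0$ there is nothing to prove. For the inductive step, starting from $x \in U_i$ with $h(z^\star_i)=k$, escape in time $\leqs c_2 \log(\sigma^{-1})$, track the orbit for at most $T$ further steps, and arrive in a neighborhood of a strictly lower-height fixed point with probability $1 - \e^{-c_4/\sigma^2}$. On the complementary event, bound the remaining expected time by the rough estimate $E_\X(\sigma) \leqs \e^{\eta/\sigma^2}$ from \Cref{prop:ldp_EMN} applied with $\eta < c_4$; this contribution is then bounded uniformly in $\sigma$. Iterating at most $H+1$ times and summing gives $\expecin{x}{\tau_{\MN}} \leqs (H+1)\bigpar{c_2\log(\sigma^{-1}) + T} + \bigOrder{1}$, uniformly in $x \in \X$. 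The main obstacle I anticipate is showing that the exit distribution from $U_i$ is sufficiently concentrated along the unstable manifold so that it lands in the basin of a strictly lower-height fixed point with probability approaching $1$. This requires combining the linear expansion analysis with the regularity of the Gaussian density and a transversality argument: the set of initial conditions in $U_i$ whose deterministic orbit lands on a stable manifold of another unstable fixed point has a controlled tubular neighborhood, whose Gaussian mass one must estimate.
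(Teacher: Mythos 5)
Your overall strategy is very close to the paper's: a tracking estimate away from fixed points, a $\log(\sigma^{-1})$-time escape estimate near unstable fixed points, and an iteration exploiting the acyclicity of the heteroclinic graph (your height function is a dual formulation of the paper's strict partial order $\prec$ on unstable fixed points). However, there is a genuine quantitative gap in your escape estimate that propagates to the final conclusion and would make the proof fail as written.

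The issue is your claim that escape from $U_i$ happens in time $O(\log(\sigma^{-1}))$ with failure probability $\e^{-c_3/\sigma^2}$. That is too strong. Near a linearly unstable fixed point, the unstable component of $X_n$ is approximately a Gaussian of standard deviation $\sim\sigma\lambda_+^n$ (where $\lambda_+>1$ is the unstable eigenvalue modulus). To leave a neighborhood of size $\sigma^{3/4}$, one needs $\sigma\lambda_+^n$ to exceed $\sigma^{3/4}$, i.e.\ $n$ of order $\log(\sigma^{-1})$, but the probability of the Gaussian remaining in a ball of radius $\sigma^{3/4}$ when its standard deviation is only a power of $\sigma$ decays \emph{polynomially}, not exponentially, in $\sigma$. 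The paper's Lemmas~\ref{lem:exit_S} and~\ref{lem:exit_U} indeed only give failure probability $C\log(\sigma^{-1})\sigma^{1/2}$. Consequently your final step, where you bound the contribution of the bad event by its probability times the a priori bound $\e^{\eta/\sigma^2}$ from Proposition~\ref{prop:ldp_EMN}, does not work: the product $\sigma^{1/2}\log(\sigma^{-1})\cdot\e^{\eta/\sigma^2}$ diverges as $\sigma\to0$, so it is not $\bigOrder{1}$.

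The fix is not to sum expectations directly over good and bad events, but to use the renewal-type bound already established in the proof of Proposition~\ref{prop:ldp_EMN} (equation~\eqref{eq:proof_EXMN_n0}): $\bigexpecin{\X}{\tau^+_{\MN}} \leqs n_0\,\bigexpecin{\X}{\tau^+_\X}\big/\bigprobin{\X}{\tau^+_{\MN}\leqs n_0}$. With this reformulation, it is enough that $\bigprobin{\X}{\tau^+_{\MN}\leqs n_0}$ be bounded below by a constant (or even by $1-C\sigma^{1/2}\log(\sigma^{-1})$) for some $n_0$ of order $\log(\sigma^{-1})$; the polynomially small failure probability of the escape estimate is then harmless, since $\bigexpecin{\X}{\tau^+_\X}$ is $\bigOrder{1}$ by Assumption~\ref{ass:REC}. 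Your concern about the exit distribution landing in the basin of a lower-height fixed point is also legitimate, but it is handled in the paper by a one-sided bound: starting from a neighbourhood $\cU_j\cup\cA_j$, the probability of hitting a \emph{lower-order} $\cU_i\cup\cA_i$ (with $i\prec j$, or $i,j$ unrelated) before exiting $\cU_j\cup\cA_j$ is exponentially small by the tracking Lemma~\ref{lem:stable}; one then never needs to know precisely where the exit lands, only that it does not go back.
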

\begin{proof}
See Appendix~\ref{ssec:proof_EMN_gaussian}. 
\end{proof}

The reason we exclude heteroclinic cycles is that the system may spend times 
longer than $\log(\sigma^{-1})$ in their neighbourhood. 
Note that SDEs with heteroclinic cycles have been investigated, 
for instance, in~\cite{Bakhtin_2011}. Results from that work may be transposed 
to the present situation, to analyse that point in more detail. 

Based on what is known in the continuous-time case~\cite{BaudelBerglund_2017}, 
similar results are expected to hold for more general systems with 
state-dependent noise, of the form 
\begin{equation}
 X_{n+1} = \Pi(X_n) + \sigma g(X_n)\xi_{n+1}\;, 
\end{equation} 
provided $g$ satisfies an ellipticity condition (that is, 
$g(x)\transpose{g(x)}$ should be positive definite). If $g$ fails to be 
elliptic at certain points, a more careful analysis becomes necessary.


\subsection{Random Poincar\'e maps} 
\label{ssec:poincare}

Consider a stochastic differential equation on $\cD_0\subset\R^{d+1}$ of the 
form 
\begin{equation}
\label{eq:SDE} 
 \6z_t = f(z_t) \6t + \sigma g(z_t) \6W_t\;,
\end{equation} 
where $f:\cD_0\to\R^{d+1}$ is a vector field of class $\cC^2$,  
$g:\cD_0\to\R^{(d+1)\times k}$ is of class $\cC^1$ and $(W_t)_{t\geqs0}$ is a 
$k$-dimensional standard Wiener process. Assume further that the deterministic 
ordinary differential equation 
\begin{equation}
\label{eq:ODE} 
 \dot z = f(z)
\end{equation} 
admits $N\geqs 2$ linearly asymptotically stable periodic orbits $\Gamma_1, 
\dots, \Gamma_N$, and that there exists a smooth $d$-dimensional manifold 
$\Sigma$ that all $\Gamma_i$ intersect transversally 
(cf.~\cite[Sect.~2.2]{BaudelBerglund_2017}). 

The \emph{random Poincar\'e map} associated with this system describes the 
sequence $(X_0, X_1, \dots)$ of successive intersections of a sample path 
$(z_t)_{t\geqs0}$ of the SDE~\eqref{eq:SDE} with $\Sigma$. To obtain a 
well-defined process, these intersections should be separated by excursions away 
from $\Sigma$, which can be achieved by requiring the sample path to visit 
another section $\Sigma'$, disjoint from $\Sigma$, between two consecutive $X_i$ 
(see~\cite[Sect.~2.3]{BaudelBerglund_2017}). The strong Markov property 
implies that the sequence $(X_n)_{n\geqs0}$ forms a Markov chain which, under 
suitable assumptions on $f$ and $g$, is of the form studied here.

\subparagraph{\Cref{ass:DET}.} This assumption is fulfilled if the 
deterministic system~\eqref{eq:ODE} admits a positively invariant, bounded open 
connected set $\cD\subset\cD_0$, intersecting $\Sigma$, and the limit sets 
of~\eqref{eq:ODE} are given by the $\Gamma_i$ and finitely many linearly 
unstable stationary points or unstable orbits. We can then take $\X = 
\cD\cap\Sigma$, and $\Pi$ maps a point in $x\in\Sigma$ to the point where the 
positive orbit of $x$ first returns to $\Sigma$. 
Furthermore, $x^\star_i = \Gamma_i\cap\Sigma$, and the intersections of the 
unstable periodic orbits with $\Sigma$ are the unstable fixed points of $\Pi$. 

\subparagraph{\Cref{ass:LDP}} Assume the diffusion coefficient $g$ satisfies 
an ellipticity condition, that is, there exist constants $c_+ \geqs c_- > 
0$ such that the diffusion matrix $D(z) = g(z)\transpose{g(z)}$ satisfies 
\begin{equation}
\label{eq:ellipticity} 
 c_- \norm{\xi}^2 \leqs \pscal{\xi}{D(z)\xi} 
 \leqs c_+ \norm{\xi^2}
\end{equation} 
for all $z\in\cD$ and $\xi\in\R^{d+1}$. Then Wentzell--Freidlin 
theory~\cite{FreidlinWentzell_book} provides the existence of a 
sample-path LDP with rate function 
\begin{equation}
\mathcal{I}_{[0,T]}(\gamma) = 
 \begin{cases}
 \displaystyle \frac12 
 \int_0^T \transpose{(\dot\gamma_s - f(\gamma_s))}D(\gamma_s)^{-1} 
(\dot\gamma_s - f(\gamma_s)) \6s & \text{if $\gamma\in H^1$\;,} \\
+ \infty & \text{otherwise\;.}
 \end{cases}
\end{equation} 
Note that if $g$ fails to satisfy the ellipticity 
condition~\eqref{eq:ellipticity}, an LDP may still hold, but its rate function 
is given by a variational principle (obtained by applying the contraction 
principle to Schilder's theorem for scaled Brownian motion). 

This continuous-time LDP induces, by the contraction 
principle, a discrete-time LDP with rate function 
\begin{equation}
 I(x,y) = \inf_{T>0} \; \inf_{\gamma\colon x\to y} 
\mathcal{I}_{[0,T]}(\gamma)\;,
\end{equation} 
where the second infimum runs over paths connecting points $x$ and $y$ 
in $\Sigma$ in time $T$, and making an excursion via $\Sigma'$. 

\subparagraph{\Cref{ass:REC}.} The ellipticity condition~\eqref{eq:ellipticity} 
ensures that the kernel $K_\sigma$ of the Markov chain $(X_n)_{n\geqs0}$ admits 
a continuous density $k_\sigma$ as shown 
in~\cite{BenArous_Kusuoka_Stroock_1984}. In fact, a weaker hypo-ellipticity 
condition is sufficient. As for Harris recurrence, it follows from a 
continuous-time analogue of the discrete drift 
condition~\eqref{eq:Lyapunov}~\cite{Meyn_Tweedie_1993b}.
Namely, there should exist a function $V:\cD_0\to\R_+$ of class $\cC^2$, 
diverging as $\norm{x}\to\infty$, and constants $c>0$ and $d\geqs 0$ such that 
\begin{equation}
 (\cL V)(z) \leqs -c + d \ind{z\in\cD}
 \qquad \forall z\in\cD_0\;,
\end{equation} 
where $\cL$ is the infinitesimal generator of the diffusion~\eqref{eq:SDE}. 

\subparagraph{\Cref{ass:POS}.} The uniform positivity condition~\eqref{eq:UPC} 
of the trace process can again be proved to hold by applying the coupling 
argument of Proposition~\ref{prop:coupling}. Instead of using Harnack inequalities for the 
density of a Gaussian random variable, one can use Harnack inequalities 
satisfied by harmonic functions, see~\cite[Sect.~5.1]{BaudelBerglund_2017} 
and~\cite[Sect.~5.3]{berglund2014noise}. The parameter $n_0(\sigma)$ in the 
uniform positivity condition has again order $\log(\sigma^{-1})$. 

We also have an analogue of Proposition~\ref{prop:EMN_gaussian} on the expected hitting 
time of the metastable set $\MN$. 

\begin{proposition}[Expected hitting time of $\MN$]
\label{prop:EMN_poincare} 
Assume the deterministic dynamical system \eqref{eq:ODE} has no heteroclinic 
cycles. Then there exist constants $c_0, \sigma_0, \delta_0>0$ such that 
\begin{equation}
 E_\X(\sigma) \leqs c_0 \log(\sigma^{-1})
\end{equation} 
holds for $0 < \sigma < \sigma_0$ and $0 < \delta < \delta_0$. 
\end{proposition}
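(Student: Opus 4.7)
The plan is to follow the same strategy as in the proof of Proposition~\ref{prop:EMN_gaussian}, transposing the discrete-time Gaussian arguments to the continuous-time SDE~\eqref{eq:SDE} and its induced Poincar\'e chain. The no-cycle hypothesis plays the same role as in the iterated-map case: it forbids trajectories from being trapped in an infinite chain of escapes from unstable invariant sets, so that only finitely many such escapes contribute to the expected hitting time, each with a cost of at most $\log(\sigma^{-1})$.

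First I would cover $\X$ by a small $\rho$-neighbourhood $V_\rho$ (in $\Sigma$) of the intersections with $\Sigma$ of all unstable stationary points and unstable periodic orbits of~\eqref{eq:ODE}, together with its complement. By compactness of $\cD\setminus V_\rho$ and the no-cycle assumption, every deterministic Poincar\'e orbit starting in $\X\setminus V_\rho$ reaches $\MN$ after a uniformly bounded number $T_\rho$ of returns to $\Sigma$. The sample-path LDP together with the ellipticity~\eqref{eq:ellipticity} then implies that over a bounded time window the SDE solution stays within an $\epsilon$-tube of its deterministic counterpart with probability $1-\e^{-c(\epsilon)/\sigma^2}$, so that from any $x\in\X\setminus V_\rho$ the chain $(X_n)_{n\geqs0}$ reaches $\MN$ within $T_\rho$ Poincar\'e iterations with overwhelming probability, contributing only $O(1)$ to the expected hitting time.

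Next, at each unstable fixed point $z^\star\in\Sigma$ of $\Pi$ the Jacobian $D\Pi(z^\star)$ has eigenvalues of modulus strictly larger than $1$ (the nontrivial Floquet multipliers of the corresponding unstable periodic orbit of~\eqref{eq:ODE}, or the image under $D\Pi$ of the unstable spectrum of the linearised flow in the stationary case). Ellipticity of $D(z)=g(z)\transpose{g(z)}$, together with the Harnack-type lower bound on the Poincar\'e transition density used to verify \Cref{ass:POS}, ensures that the effective one-step noise on the chain has a non-degenerate component along every unstable direction of $D\Pi(z^\star)$. A linearised escape argument identical to the one behind Proposition~\ref{prop:EMN_gaussian} then gives that the expected exit time of the Poincar\'e chain from a $\rho$-neighbourhood of $z^\star$ is at most $C\log(\sigma^{-1})$, with $C$ depending on $\rho$ and on the unstable eigenvalues but not on $\sigma$. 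Since the no-cycle condition induces a well-defined partial order on the finitely many unstable invariant sets, the chain can visit at most a bounded number of their neighbourhoods before being captured by $\MN$; summing the $O(\log(\sigma^{-1}))$ cost of each escape and the $O(1)$ transit times between them yields $E_\X(\sigma)\leqs c_0\log(\sigma^{-1})$.

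The main obstacle is the rigorous execution of the escape estimate at $z^\star$. Unlike in the additive-noise iterated-map setting of Proposition~\ref{prop:EMN_gaussian}, the effective one-step noise of the Poincar\'e chain is implicit: it is the conditional law of the SDE solution at its first return to $\Sigma$ following an excursion via $\Sigma'$. Establishing that this induced noise retains enough non-degeneracy along the unstable directions of $D\Pi(z^\star)$ to drive a linearised escape requires combining the ellipticity of $D(z)$, the transversality of $\Sigma$ to the $\Gamma_i$, and Harnack estimates for harmonic functions of the diffusion, in the spirit of~\cite[Sect.~5.1]{BaudelBerglund_2017} and~\cite[Sect.~5.3]{berglund2014noise}. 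A secondary technical point is to ensure that rare excursions of the SDE which leave $\cD$ contribute only $O(1)$ to $E_\X(\sigma)$; this should follow from the continuous-time Lyapunov condition already invoked to verify \Cref{ass:REC}.
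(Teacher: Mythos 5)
Your proposal follows essentially the same route as the paper, which simply cites \cite[Cor.~8.13]{BaudelBerglund_2017} and notes that the arguments in the proof of \Cref{prop:EMN_gaussian} upgrade the no-heteroclinic-orbit hypothesis there to the weaker no-heteroclinic-cycle hypothesis; your reconstruction (cover $\X$ by neighbourhoods of the unstable fixed points of $\Pi$, $O(1)$ transit outside them by compactness and the no-cycle condition, $O(\log\sigma^{-1})$ linearised escape near each, and a no-cycle partial order to cap the number of escape episodes) is precisely that argument transposed to the Poincar\'e-map setting. You also correctly isolate the one genuinely new technical ingredient, namely the non-degeneracy of the implicit Poincar\'e-return noise along unstable directions, to be obtained from ellipticity of $D(z)$, transversality of $\Sigma$, and the Harnack estimates of \cite[Sect.~5.1]{BaudelBerglund_2017} and \cite[Sect.~5.3]{berglund2014noise}.
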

\begin{proof}
See~\cite[Cor.~8.13]{BaudelBerglund_2017}. This work excluded the 
existence of heteroclinic orbits between unstable periodic orbits, but the same 
arguments as in the proof of Proposition~\ref{prop:EMN_gaussian} show that the absence of 
heteroclinic \emph{cycles} is sufficient.  
\end{proof}




\section{Proof of Proposition~\ref{prop:spectral_gap}}
\label{sec:proof_spectral_gap} 

In this section, we give the proof of the spectral gap result stated in 
Proposition~\ref{prop:spectral_gap}, by adapting the proof 
of~\cite[Thm~3.2]{BaudelBerglund_2017} to the weaker assumptions of the present work. 
We start with a simple but useful a priori estimate.

\begin{lemma}
\label{lem:exit_M} 
There exist constants $\theta_0, \sigma_0 > 0$ such that 
\begin{equation}
 \sup_{x\in\MN} \bigprobin{x}{X_1 \in \MN^c} \leqs \e^{-\theta_0/\sigma^2}
 \label{eq:bound_ldp_MNc} 
\end{equation} 
holds for all $\sigma \leqs \sigma_0$. 
\end{lemma}
\begin{proof}
Pick an $x\in B_i\subset\MN$. Since $B_i$ is assumed to be positively invariant under 
the map $\Pi$ and $x^\star_i$ is asymptotically stable, $\Pi(x)$ belongs to $B_i$, and 
its distance to $\partial B_i$ is bounded below. The claim thus follows from the large-deviation principle. 
\end{proof}

Fix $x\in\X$ and $m_0\in\N$. By Markov's inequality and the definition~\eqref{eq:EX} 
of $E_\X(\sigma)$, we have 
\begin{equation}
 \bigprobin{x}{\tau^+_\MN > m_0}
 \leqs \frac{1}{m_0} \expecin{x}{\tau^+_\MN} 
 \leqs \frac{1}{m_0} E_\X(\sigma)\;.
\end{equation} 
This shows that 
\begin{align}
 \bigprobin{x}{X_{m_0}\notin\MN}
 &\leqs \bigprobin{x}{\tau^+_\MN > m_0} 
 + \bigprobin{x}{X_{m_0}\notin\MN, \tau^+_\MN \leqs m_0} \\
 &\leqs \frac{1}{m_0} E_\X(\sigma) 
 + \sup_{m_1\leqs m_0} \sup_{y\in\MN} \bigprobin{y}{X_{m_1}\in\MN^c}\;.
\end{align}
The second term is exponentially small by~\eqref{eq:bound_ldp_MNc}, and is thus 
bounded by $\frac14$ for $\sigma$ small enough.  
For sufficiently small $\sigma$, we thus have 
\begin{equation}
 \bigprobin{x}{X_{m_0}\notin\MN} \leqs \frac12
\end{equation} 
provided $m_0 \geqs 4E_\X(\sigma)$. 


\subsection{Feynman--Kac representation formulas}
\label{ssec:FK} 

We will now rely on Feynman--Kac representation formulas for eigenfunctions of the 
kernel $K$, as used in~\cite[Sect.~4]{BaudelBerglund_2017}, to show that there 
are only $N$ eigenvalues outside a given disc in the complex plane.  
Writing $\widetilde X_n = X_{nm_0}$ for the time-diluted 
Markov chain and $\tilde\tau^+_\MN$ for the corresponding first-hitting time of $\MN$, 
we obtain from~\cite[Lem.~4.1]{BaudelBerglund_2017} that the Laplace transform 
$\bigexpecin{x}{\e^{u\tilde\tau^+_\MN}}$ exists whenever $\abs{\e^{-u}} \geqs \frac12$.
By~\cite[Cor.~4.3]{BaudelBerglund_2017}, we know that $(\e^{-u},\phi)$ is an 
eigenpair of $K^{m_0}$ for $\abs{\e^{-u}} > \frac12$ if, and only if, one has 
\begin{equation}
 ((K^u)^{m_0}\phi)(x) = \e^{-u} \phi(x) 
 \qquad \forall x \in \MN\;, 
\end{equation} 
where $K^u$ is the kernel defined by 
\begin{equation}
 K^u(x,A) = \biggexpecin{x}{\e^{u(\tau^+_\MN-1)}
 \ind{X_{\tau^+_\MN} \in A}} 
\qquad \forall A\in\cB(\MN)\;.
\end{equation} 
Note that $K^0$ is equal to the kernel $\ptrace{K}{\MN}$ of the trace process on $\MN$.
It follows that $(\e^{-u},\phi)$ is an 
eigenpair of $K$ for $\abs{\e^{-u}} > \bigpar{\frac12}^{1/m_0}$ if, and only if, one has 
\begin{equation}
\label{eq:Ku_eigenvalue} 
 (K^u\phi)(x) = \e^{-u} \phi(x) 
 \qquad \forall x \in \MN\;.  
\end{equation} 

\begin{remark}
It may seem unusual that the variable $u$ appears both in the kernel $K^u$ and the eigenvalue 
$\e^{-u}$. This is not a problem, however, since the eigenvalue problem~\eqref{eq:Ku_eigenvalue} can be considered as the system 
\begin{equation}
 K^u\phi = \lambda\phi\;, 
 \qquad 
 \lambda = \e^{-u}
\end{equation} 
for two unknowns $\lambda$ and $\e^{-u}$. 
\end{remark}

The idea is now to compare the kernel $K^u$ to the simpler kernel $K^\star$, 
defined for $A\subset\MN$ by 
\begin{equation}
 K^\star(x,A) 
 = \sum_{i=1}^N \ind{x\in B_i} \bigprobin{\QSD{i}}{X_{\tau^+_{\MN}}\in A}\;.  
\end{equation} 
Here we recall that $\QSD{i}$ denotes the quasistationary distribution of the trace process 
on $\MN$ killed when leaving $B_i$. Since 
\begin{equation}
 (K^\star\phi)(x) = \sum_{i=1}^N \ind{x\in B_i} 
 \bigexpecin{\QSD{i}}{\phi(X_{\tau^+_{\MN}})}\;,
\end{equation} 
the kernel $K^\star$ has finite rank. Indeed, its image is the $N$-dimensional space of 
functions $\phi:\MN\to\R$ that are constant on each $B_i$. Therefore, $K^\star$ has at most 
$N$ nonzero eigenvalues. These eigenvalues are exactly those of the $N$ by $N$ stochastic matrix $P^\star$ 
with elements 
\begin{equation}
\label{eq:Pij} 
 P^\star_{ij} = \bigprobin{\QSD{i}}{X_{\tau^+_\MN} \in B_j}\;.
\end{equation} 
Note that Proposition~\ref{prop:ldp_committor} implies that for any $\eta > 0$, there exists a 
$\sigma_0(\eta) > 0$ such that one has 
\begin{equation}
\label{eq:ldp_MN_Bi} 
 \e^{-(H_0 + \eta)/\sigma^2}
 \leqs \probin{x}{\tau_{\MN\setminus B_i} \leqs n} 
 \leqs n\e^{-(H_0 - \eta)/\sigma^2}
\end{equation} 
for any $n\in\N$, any $x\in B_i$ and all $\sigma < \sigma_0(\eta)$, where $H_0$ has been 
introduced in~\eqref{eq:def_H_0}. This shows in particular that the matrix elements~\eqref{eq:Pij} satisfy 
\begin{equation}
 P_{ij} \leqs  \e^{-(H_0 - \eta)/\sigma^2} 
 \qquad 
 \text{for $i\neq j$\;.}
\label{eq:bound_Pij} 
\end{equation} 


\subsection{Norm estimates on kernels}
\label{ssec:norm_kernel} 

In order to compare kernels, we will need a norm on the space of (signed) kernels on $\MN$. If $Q$ is 
such a kernel with density $q$, we write 
\begin{equation}
\label{eq:K_norm} 
 \norm{Q} = \sup_{x\in\MN} \int_{\MN} \bigabs{q(x,y)}\6y 
 = \sup_{x\in\MN} \abs{Q(x,\MN)}\;.
\end{equation} 
One easily checks that this is a subordinate norm, given by 
\begin{equation}
\label{eq:K_operator_norm} 
\norm{Q} 
= \sup_{\varphi\in L^\infty\colon\norm{\varphi}_\infty = 1} 
\norm{Q\varphi}_\infty
= \sup_{\mu\in L^1\colon\norm{\mu}_1 = 1} \norm{\mu Q}_1\;.
\end{equation} 
In particular, \eqref{eq:bound_Pij} implies that the kernel $R = K^\star - \id$ satisfies 
\begin{equation}
\label{eq:bound_normR} 
 \norm{R} \leqs 2(N-1) \e^{-(H_0-\eta)/\sigma^2}\;.
\end{equation} 
This allows us to bound the resolvent of $K^\star$. Indeed, for $z\in\C\setminus\set{1}$, 
we have 
\begin{equation}
 (z\id - K^\star)^{-1} 
 = \frac{1}{z-1} \biggpar{\id - \frac{1}{z-1}R}^{-1}
 = \frac{1}{z-1} \sum_{n\geqs0} \frac{1}{(z-1)^n}R^n\;.
\end{equation}
The Neumann series converges whenever $\abs{z-1} > \norm{R}$, in which case we have 
\begin{equation}
\label{eq:resolvent_Kstar} 
 \norm{(z\id - K^\star)^{-1}} \leqs \frac{1}{\abs{z-1} - \norm{R}}\;.
\end{equation} 
It follows that all eigenvalues of $K^\star$ are contained in the closed disc of radius $\norm{R}$ centred in $1$. 

Our aim is now to compare $K^u$ and $K^\star$ in two steps. Firstly, \cite[Prop.~6.1]{BaudelBerglund_2017}, 
slightly adapted to allow for complex $u$, shows that for any $m\in\N$, 
\begin{equation}
\label{eq:bound_KuK0} 
 \norm{(K^u)^m - (K^0)^m}
 \leqs \Biggpar{1 + \frac{\abs{1 - \e^{-u}} \expecin{\MN}{\tau^+_{\MN}-1}}
 {1 - \abs{1 - \e^{-u}}\expecin{\MN^c}{\tau^+_\MN}}}^m - 1\;,
\end{equation} 
which holds provided the denominator is strictly positive. This is indeed the case provided 
$\abs{1 - \e^{-u}} < E_\X(\sigma)^{-1}$. Secondly, \cite[Prop.~6.7]{BaudelBerglund_2017} 
shows that for any $m\in\N$, 
\begin{equation}
 \norm{(K^0)^m - (K^\star)^m}
 \leqs \sup_{1\leqs i\leqs N} R_i\;,
\end{equation} 
where
\begin{align}
 R_i ={}& \norm{\mathring\phi^{B_i}_0 - 1} 
 + 2 \bigabs{\mathring\lambda^{B_i}_1}^m 
 + 2 \frac{1 - \bigabs{\mathring\lambda^{B_i}_1}^m}{1 - \bigabs{\mathring\lambda^{B_i}_1}}
 \probin{B_i}{\tau^+_{\MN\setminus B_i} < \tau^+_{B_i}} \\
 &{}+ m(m-1) \probin{B_i}{\tau^+_{\MN\setminus B_i} < \tau^+_{B_i}}
 \probin{\MN\setminus B_i}{\tau^+_{B_i} < \tau^+_{\MN\setminus B_i}}\;.
 \label{eq:Ri} 
\end{align} 
Here the $\mathring\phi^{B_i}_k$ and $\mathring\lambda^{B_i}_k$ denote eigen-elements of the trace process on $\MN$ killed upon leaving $B_i$. These can be estimated thanks to the uniform positivity condition~\ref{ass:POS}. First note that integrating~\eqref{eq:UPC} against $\phi^A_0(y)$, yields the very rough bound 
\begin{equation}
\label{eq:oscillation_rough} 
 \sup_{x\in A} \phi^A_0 \leqs L \inf_{x\in A} \phi^A_0(x)\;.
\end{equation} 
With the normalisation $\pi^A_0(\phi^A_0)=1$, this yields $L^{-1} \leqs 
\phi^A_0(x) \leqs L$ for all $x\in A$. A much sharper bound is then provided by 
the following estimates. 

\begin{proposition}[Spectral gap and oscillation of $\phi^A_0$]
Let $K_A$ be the kernel of the process killed upon leaving $A$. 
Assume its density $k_A$ satisfies the uniform positivity condition~\eqref{eq:UPC} 
with parameters $n_0(\sigma)\in\N$ and $L\in(1,2)$. Then the spectral gap satisfies 
\begin{equation}
\label{eq:spectral_gap} 
 {\pth{\frac{\abs{\lambda^A_1}}{\lambda^A_0}}}^{n_0(\sigma)} 
 \leqs L - 
 \frac{\displaystyle \inf_{x\in A} \; \bigprobin{x}{\tau_{A^c} > n_0(\sigma)}}
{\bigpar{\lambda^A_0}^{n_0(\sigma)}}\;.
\end{equation} 
Furthermore, the oscillation of the principal eigenfunction satisfies
\begin{equation}
\label{eq:oscillation_phiA0} 
\norm{\phi^A_0 - 1}
:= \sup_{x\in A} \abs{\phi^A_0(x) - 1}
\leqs L^3 
\abs{1 - \frac{\displaystyle \inf_{x\in A} \; 
\bigprobin{x}{\tau_{A^c} > n_0(\sigma)}}
{\bigpar{\lambda^A_0}^{n_0(\sigma)}}}\;.
\end{equation} 
\end{proposition}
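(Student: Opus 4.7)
The plan is to exploit the uniform positivity condition as a Doeblin-type minorization of the iterated kernel $K_A^{n_0}$. Set $\epsilon = \inf_{x\in A}\bigprobin{x}{\tau_{A^c} > n_0}$ and define the positive measure $\nu(\6y) = m(y)\,\6y$ with $m(y) = \inf_{x\in A} k_A^{n_0}(x,y)$. Integrating the UPC inequality in $y$ yields $\epsilon/L \leqs \nu(A) \leqs \epsilon$ and $\sup_{x\in A}\bigprobin{x}{\tau_{A^c} > n_0} \leqs L\nu(A)$, and produces the decomposition
\begin{equation*}
K_A^{n_0}(x,\6y) = \nu(\6y) + R(x,\6y),
\end{equation*}
where the residual $R$ has density $r(x,y) = k_A^{n_0}(x,y) - m(y)$ taking values in $[0,(L-1)m(y)]$, so in particular $\sup_{x\in A}R(x,A) \leqs (L-1)\nu(A)$.

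For the spectral gap, I would apply this decomposition to a right eigenfunction $v$ of $K_A$ for $\lambda^A_1$, normalized so $\norm{v}_\infty = 1$; by Jentzsch's theorem the biorthogonality $\pi^A_0(v) = 0$ holds. Applying $\pi^A_0$ to $(\lambda^A_1)^{n_0} v = \nu(v)\cdot 1 + Rv$ gives the key relation $\nu(v) = -\pi^A_0(Rv)$, hence
\begin{equation*}
(\lambda^A_1)^{n_0}\, v(x) = Rv(x) - \pi^A_0(Rv) = \int\bigbrak{r(x,y)-\bar r(y)}\,v(y)\,\6y,
\end{equation*}
with $\bar r(y) = \int\pi^A_0(\6x)\,r(x,y)$. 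Since both $r(x,y)$ and $\bar r(y)$ take values in $[0,(L-1)m(y)]$, the integrand is bounded in modulus by $(L-1)m(y)$, so $\abs{\lambda^A_1}^{n_0} \leqs (L-1)\nu(A)$. Using $\nu(A)\leqs \epsilon\leqs(\lambda^A_0)^{n_0}$, this implies $\abs{\lambda^A_1}^{n_0}\leqs L(\lambda^A_0)^{n_0}-\epsilon$, which is exactly~\eqref{eq:spectral_gap}.

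For the oscillation, I set $\psi = \phi^A_0 - 1$, noting $\pi^A_0(\psi) = 0$. Applying the decomposition to the eigen-equation for $\phi^A_0$ and using the same $\pi^A_0$-projection trick (now with $\nu(\psi) = -\pi^A_0(R\psi)$) yields
\begin{equation*}
(\lambda^A_0)^{n_0}\psi(x) = \bigbrak{\bigprobin{x}{\tau_{A^c}>n_0}-(\lambda^A_0)^{n_0}} + \bigbrak{R\psi(x)-\pi^A_0(R\psi)}.
\end{equation*}
The second bracket is bounded by $(L-1)\nu(A)\norm{\psi}_\infty$ exactly as in the spectral gap step. The first bracket equals $R(x,A)-\pi^A_0(R(\cdot,A))$ and must be estimated using the UPC-induced two-sided bounds on the survival function together with the full support of $\pi^A_0$; this is what produces the sharp prefactor $\abs{1-\epsilon/(\lambda^A_0)^{n_0}}$. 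Plugging in the crude bound $\norm{\psi}_\infty\leqs L-1$ from~\eqref{eq:oscillation_rough} to close the residual term and isolating $\norm{\psi}_\infty$ yields~\eqref{eq:oscillation_phiA0}, with the three factors of $L$ arising from successive one-sided applications of the UPC estimate.

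The main obstacle is the mismatch between the minorizing measure $\nu$, natural for the UPC decomposition, and the biorthogonal measure $\pi^A_0$, natural for eigenfunction expansions. The identity $\nu(f) = -\pi^A_0(Rf)$ on $\pi^A_0$-mean-zero functions $f$ bridges this gap and converts the potentially $O(\nu(A))$ quantity $\abs{\nu(v)}$ or $\abs{\nu(\psi)}$ into a much smaller residual-type contribution. A subtler difficulty in the oscillation estimate is that the direct bound $\sup_x\abs{\bigprobin{x}{\tau_{A^c}>n_0}-(\lambda^A_0)^{n_0}}\leqs(L-1)\epsilon$ is too weak for~\eqref{eq:oscillation_phiA0}; the sharp scaling $\abs{1-\epsilon/(\lambda^A_0)^{n_0}}$ emerges only after carefully combining the eigen-equation with the QSD structure of $\pi^A_0$ and the continuity of $k_A^{n_0}$ on $A$.
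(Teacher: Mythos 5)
Your spectral‑gap argument is correct and self‑contained. The Doeblin-type decomposition $K_A^{n_0}(x,\6y)=\nu(\6y)+R(x,\6y)$ with $0\leqs r(x,y)\leqs(L-1)m(y)$, combined with the biorthogonality $\pi^A_0(v)=0$ to eliminate the rank-one term, yields $\abs{\lambda^A_1}^{n_0}\leqs(L-1)\nu(A)\leqs(L-1)\epsilon\leqs L(\lambda^A_0)^{n_0}-\epsilon$, using $\nu(A)\leqs\epsilon\leqs(\lambda^A_0)^{n_0}$ in the last two steps; this gives~\eqref{eq:spectral_gap}, and in fact the intermediate bound $(L-1)\epsilon$ is slightly sharper than what was asked. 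Since the paper merely cites~\cite{BaudelBerglund_2017} for this result, your argument is a genuinely independent and elementary route to the spectral gap.

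The oscillation part, however, has a real gap at the decisive step. Your identity
\begin{equation*}
(\lambda^A_0)^{n_0}\psi(x)=\bigbrak{\bigprobin{x}{\tau_{A^c}>n_0}-(\lambda^A_0)^{n_0}}+\bigbrak{R\psi(x)-\pi^A_0(R\psi)}
\end{equation*}
is correct, and the second bracket is indeed $\leqs(L-1)\nu(A)\norm{\psi}_\infty$ by the same token as before. But the first bracket is precisely what must yield the factor $\abs{1-\epsilon/(\lambda^A_0)^{n_0}}$, and you only assert that it ``must be estimated using the UPC-induced two-sided bounds'' and that ``three factors of $L$'' arise from one-sided applications, without carrying out either step. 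What the UPC actually gives is $\epsilon\leqs\bigprobin{x}{\tau_{A^c}>n_0}\leqs L\epsilon$ for all $x$ and $\epsilon\leqs(\lambda^A_0)^{n_0}\leqs L\epsilon$, so the first bracket is controlled by $(L-1)\epsilon$ — a quantity that is \emph{not} dominated by a constant multiple of $(\lambda^A_0)^{n_0}-\epsilon$: nothing in the UPC prevents $(\lambda^A_0)^{n_0}$ from being close to $\epsilon$ while $\sup_x\bigprobin{x}{\tau_{A^c}>n_0}$ stays near $L\epsilon$ (for instance when $\pi^A_0$ puts most of its mass near the minimizer of the survival probability). If you push the computation through and isolate $\norm{\psi}_\infty$, what you obtain is
\begin{equation*}
\norm{\psi}_\infty\leqs\frac{\sup_{x\in A}\bigabs{\bigprobin{x}{\tau_{A^c}>n_0}-(\lambda^A_0)^{n_0}}}{(\lambda^A_0)^{n_0}-(L-1)\nu(A)}\;,
\end{equation*}
with the \emph{two-sided} oscillation of the survival probability around $(\lambda^A_0)^{n_0}$ in the numerator, not the one-sided quantity $(\lambda^A_0)^{n_0}-\epsilon$ that appears in~\eqref{eq:oscillation_phiA0}. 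Converting the former into the latter requires an additional input — some control on $\sup_x\bigprobin{x}{\tau_{A^c}>n_0}-(\lambda^A_0)^{n_0}$ — that your proposal neither supplies nor clearly identifies as the missing ingredient. Plugging the crude bound $\norm{\psi}_\infty\leqs L-1$ into the residual does not help here, because the problematic term is the inhomogeneity, not the residual.
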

\begin{proof}
The spectral gap estimate~\eqref{eq:spectral_gap} is proved 
in~\cite[Prop.~5.1]{BaudelBerglund_2017}. The 
estimate \eqref{eq:oscillation_phiA0} is proved 
in~\cite[Prop.~5.5]{BaudelBerglund_2017}, where the constant $M$ in that 
result can be taken equal to $L$ thanks to the \textit{a priori}\/ bound  
$L^{-1} \leqs \phi^A_0(x) \leqs L$.
\end{proof}

We are going to apply these bounds to the kernel $\ptrace{K}{\MN}$ with $A = B_i$. 
Note that~\eqref{eq:ldp_MN_Bi} already allows us to bound several terms in~\eqref{eq:Ri} by exponentially small quantities. Furthermore, the definition of $\QSD{i}$ implies that for any $x\in B_i$, one has 
\begin{align}
 1 - \frac{\bigprobin{x}{\tau_{\MN\setminus B_i} > n_0(\sigma)}}
 {\bigpar{\mathring\lambda^{B_i}_0}^{n_0(\sigma)}} 
 &= 1 - \frac{\bigprobin{x}{\tau_{\MN\setminus B_i} > n_0(\sigma)}}
 {\bigprobin{\QSD{i}}{\tau_{\MN\setminus B_i} > n_0(\sigma)}} \\
 &= \frac{\bigprobin{x}{\tau_{\MN\setminus B_i} \leqs n_0(\sigma)}
 - \bigprobin{\QSD{i}}{\tau_{\MN\setminus B_i} \leqs n_0(\sigma)}}
 {1 - \bigprobin{\QSD{i}}{\tau_{\MN\setminus B_i} \leqs n_0(\sigma)}}\;.
\end{align}
Combining~\eqref{eq:oscillation_phiA0} and~\eqref{eq:ldp_MN_Bi} we get  
\begin{equation}
 \norm{\mathring\phi^{B_i}_0 - 1} 
 \leqs \frac{L^3 n_0(\sigma) \e^{-(H_0 - \eta)/\sigma^2}}{1 - \e^{-(H_0 + \eta)/\sigma^2}}\;.
\end{equation} 
A similar argument, based on the bound~\eqref{eq:spectral_gap}, shows that 
\begin{equation}
\label{eq:bound_lambda_QSD} 
 \bigabs{\mathring\lambda^{B_i}_1}
 \leqs 
 \Biggpar{\frac{L - 1 + n_0(\sigma)\e^{-(H_0-\eta)/\sigma^2}}
 {1 - \e^{-(H_0 + \eta)/\sigma^2}}}^{1/n_0(\sigma)}\;,
\end{equation}
Plugging the last two estimates into~\eqref{eq:Ri} yields 
\begin{align}
\norm{(K^0)^m - (K^\star)^m}
 \leqs&{} \frac{L^3 n_0(\sigma) \e^{-(H_0 - \eta)/\sigma^2}}{1 - \e^{-(H_0 + \eta)/\sigma^2}}
 + \Biggpar{\frac{L - 1 + n_0(\sigma)\e^{-(H_0-\eta)/\sigma^2}}
 {1 - \e^{-(H_0 + \eta)/\sigma^2}}}^{m/n_0(\sigma)} \\
 &{}+ 2m  \e^{-(H_0 - \eta)/\sigma^2} + m^2  \e^{-2(H_0 - \eta)/\sigma^2}\;,
\label{eq:bound_K0_Kstar} 
\end{align}
where we have bounded the fraction in~\eqref{eq:Ri} above by $m$. 


\subsection{Resolvent estimate}
\label{ssec:resolvent} 

We can now apply the following classical resolvent estimate, see for 
instance the argument presented in~\cite[Sect.~7.1]{BaudelBerglund_2017}, 
which is based on~\cite[Cor.~8.2]{gohberg2013basic} and 
\cite[Prop.~4.2]{gohberg2013classes}. 

\begin{lemma}
\label{lem:resolvent} 
Let $K_1$ and $K_2$ be compact linear operators. Let\/ $\Gamma$ be a contour in the complex plane, 
encircling $k$ eigenvalues of $K_1$. Let 
\begin{align}
\gamma &= \min\bigsetsuch{\norm{(z\id - K_1)^{-1}}^{-1}}{z\in\Gamma}\;, \\
C &= \frac{1}{\pi} \int_{\Gamma} \norm{(z\id - K_1)^{-1}}^2 \6z\;.
\end{align} 
If $\norm{K_2 - K_1} < \min\set{\frac12\gamma, C^{-1}}$, then $K_2$ has exactly $k$ 
eigenvalues inside the contour\/ $\Gamma$.
\end{lemma}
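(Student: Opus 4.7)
The plan is to follow the classical Riesz spectral projection method. For $i\in\set{1,2}$, introduce the projection
\begin{equation}
 P_i = \frac{1}{2\pi\icx}\int_\Gamma (z\id-K_i)^{-1}\6z\;,
\end{equation}
which by Riesz--Schauder theory projects onto the direct sum of generalised eigenspaces of $K_i$ associated with the eigenvalues enclosed by $\Gamma$; in particular $\dim\mathrm{ran}(P_1)=k$. The whole argument then reduces to establishing that $\norm{P_2-P_1}<1$, since it is classical that two bounded projections at distance less than $1$ are conjugate via an invertible operator (for instance $U=P_2P_1+(\id-P_2)(\id-P_1)$ satisfies $UP_1=P_2U$ and is invertible as soon as $\norm{U-\id}$, which is quadratic in $\norm{P_2-P_1}$, stays below $1$) and therefore have equal rank.

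The heart of the proof is a uniform resolvent estimate on $\Gamma$. The hypothesis $\norm{K_2-K_1}<\gamma/2$ gives
\begin{equation}
 \norm{(z\id-K_1)^{-1}(K_2-K_1)}\leqs\frac{\norm{K_2-K_1}}{\gamma}<\frac12
 \qquad\forall z\in\Gamma\;,
\end{equation}
so a Neumann series first yields invertibility of $z\id-K_2=(z\id-K_1)\bigbrak{\id-(z\id-K_1)^{-1}(K_2-K_1)}$ for every $z\in\Gamma$, together with the resolvent expansion
\begin{equation}
 (z\id-K_2)^{-1}-(z\id-K_1)^{-1}=\sum_{n\geqs1}\bigbrak{(z\id-K_1)^{-1}(K_2-K_1)}^n(z\id-K_1)^{-1}\;.
\end{equation}
Summing the geometric series against the ratio bound $1/2$ then produces the pointwise estimate
\begin{equation}
 \norm{(z\id-K_2)^{-1}-(z\id-K_1)^{-1}}\leqs 2\norm{(z\id-K_1)^{-1}}^2\norm{K_2-K_1}
\end{equation}
uniformly in $z\in\Gamma$. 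Integrating along $\Gamma$ and using the definition of $C$ together with the second hypothesis $\norm{K_2-K_1}<C^{-1}$ yields $\norm{P_2-P_1}\leqs C\norm{K_2-K_1}<1$, which closes the argument.

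I do not anticipate any real obstacle here: the statement is essentially a packaged version of Gohberg--Krein perturbation theory, and every step above is routine. The only point requiring care is that the two smallness conditions play distinct roles and neither implies the other: the bound $\norm{K_2-K_1}<\gamma/2$ is what ensures pointwise invertibility of $z\id-K_2$ and convergence of the Neumann expansion uniformly on the contour, whereas $\norm{K_2-K_1}<C^{-1}$ is what, after integrating the resolvent difference, forces $\norm{P_2-P_1}$ strictly below $1$.
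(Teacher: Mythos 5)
The paper does not prove this lemma itself but cites it from Gohberg--Goldberg--Kaashoek via Sect.~7.1 of an earlier paper; your argument reconstructs the standard Riesz-projection proof behind those references, and the analytic core --- the Neumann-series resolvent bound $\norm{(z\id-K_2)^{-1}-(z\id-K_1)^{-1}}\leqs 2\norm{(z\id-K_1)^{-1}}^2\norm{K_2-K_1}$ on $\Gamma$ and its contour integral giving $\norm{P_2-P_1}\leqs C\norm{K_2-K_1}<1$ --- is correct.

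There is, however, a concrete error in the parenthetical you give for why projections at distance less than one have equal rank. You assert that $\norm{U-\id}$ is quadratic in $\norm{P_2-P_1}$, where $U=P_2P_1+(\id-P_2)(\id-P_1)$, and infer invertibility of $U$ from the Neumann series for $U$ itself. But using $P_1^2=P_1$ and $P_2=P_1+(P_2-P_1)$ one computes
\begin{equation}
U-\id = (P_2-P_1)(2P_1-\id)\;,
\end{equation}
which is \emph{linear} in $P_2-P_1$, and $2P_1-\id$ is an involution whose norm is at least $1$ and may be arbitrarily large when $P_1$ is a non-orthogonal Riesz projector. So $\norm{U-\id}<1$ does not follow from $\norm{P_2-P_1}<1$, and the stated reason for invertibility of $U$ fails. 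The fact you are invoking is nevertheless true, but the correct mechanism (and the one the cited references actually use) is to pair $U$ with $V=P_1P_2+(\id-P_1)(\id-P_2)$ and verify the algebraic identity $UV=VU=\id-(P_2-P_1)^2$; here $(P_2-P_1)^2$ is the genuinely quadratic quantity, $\norm{(P_2-P_1)^2}\leqs\norm{P_2-P_1}^2<1$ gives invertibility of $\id-(P_2-P_1)^2$ by Neumann, hence $U$ is invertible with inverse $(\id-(P_2-P_1)^2)^{-1}V$, and the intertwining $UP_1=P_2U$ then yields the similarity and equality of ranks. With this one step repaired, the rest of your argument stands as written.
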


We now apply this lemma to $K_1 = (K^\star)^m$, and $K_2 = (K^u)^m$. The same argument as 
the one yielding~\eqref{eq:resolvent_Kstar} shows that   
\begin{equation}
 \norm{(z\id - (K^\star)^m)^{-1}} \leqs \frac{1}{\abs{z-1} + 1 - (1 + \norm{R})^m}\;.
\end{equation} 
It follows that all $N$ nonzero eigenvalues of $(K^\star)^m$ are contained in a disc of radius 
$(1 + \norm{R})^m - 1$, centred in $1$. Given $r \geqs 2[(1 + \norm{R})^m - 1]$, 
Lemma~\ref{lem:resolvent} applied to the contour $\Gamma$ of radius $r$, centred in $1$ 
shows that $(K^u)^m$ has exactly $N$ eigenvalues inside $\Gamma$, provided 
\begin{equation}
\label{eq:condition_norm} 
 \norm{(K^u)^m - (K^\star)^m} \leqs r\;.
\end{equation} 
We now make some convenient choices for various parameters. First of all,
we assume that 
\begin{equation}
 \eta \leqs \frac13\min\bigset{H_0, \theta_0}\;,
\end{equation} 
and take $\sigma$ small enough to guarantee that $n_0(\sigma) \leqs \e^{\eta/\sigma^2}$ and $E_\X(\sigma) \leqs \e^{\eta/\sigma^2}$, 
which is possible by \Cref{ass:POS} and Proposition~\ref{prop:ldp_EMN}.
Since $\delta = \frac12 (L - 1) > 0$, we may define 
\begin{equation}
 m 
 = \Biggintpartplus{\frac{1}{\sigma^2}
 \max\biggset{m_0, \frac{H_0 n_0(\sigma)}{\log(\delta^{-1})}}}\;.
 \label{eq:choice_m} 
\end{equation} 
Note that this implies $\delta^{m/n_0(\sigma)} \leqs \e^{-H_0/\sigma^2}$. 
Then it follows from~\eqref{eq:bound_K0_Kstar} that 
\begin{equation}
 \norm{(K^0)^m - (K^\star)^m} 
 \leqs \biggpar{1 + C_1\frac{n_0(\sigma)}{\sigma^2}}
 \e^{-(H_0-\eta)/\sigma^2}
\end{equation} 
for some numerical constant $C_1$. 
Next we note that for any $x\in\MN$, one has 
\begin{equation}
 \bigexpecin{x}{\tau^+_\MN - 1} 
 \leqs \bigprobin{x}{X_1 \in \MN^c} E_\X(\sigma) 
 \leqs \e^{-\theta_0/\sigma^2} E_\X(\sigma)\;.
\end{equation} 
If we further impose the condition
\begin{equation}
\label{eq:condition_r} 
 r \leqs \frac1{2 E_\X(\sigma)}\;,
\end{equation} 
then~\eqref{eq:bound_KuK0} yields 
\begin{equation}
  \norm{(K^u)^m - (K^0)^m}
 \leqs \bigpar{1 + 2r \expecin{\MN}{\tau^+_\MN - 1}}^m - 1
 \leqs \bigpar{1 + 2r\e^{-\theta_0/\sigma^2}E_\X(\sigma)}^m - 1
\end{equation} 
If $2mr\e^{-\theta_0/\sigma^2}E_\X(\sigma)$ is bounded, 
this quantity has order $mr\e^{-\theta_0/\sigma^2}E_\X(\sigma)$, so that 
\begin{equation}
 \norm{(K^u)^m - (K^0)^m}
 \leqs C_2 \frac{n_0(\sigma)E_\X(\sigma)}{\sigma^2} \e^{-\theta_0/\sigma^2}r 
 \leqs \frac{r}{2}
\end{equation} 
thanks to our bounds on $n_0(\sigma)$ and $E_\X(\sigma)$. We may thus set $r = \e^{-(H_0-2\eta)/\sigma^2}$, which satisfies both~\eqref{eq:condition_norm} and~\eqref{eq:condition_r}. One furthermore checks that the bound~\eqref{eq:bound_normR} implies that  $r \geqs 2[(1 + \norm{R})^m - 1]$. We can thus conclude that $K^u$ and $K^\star$ have the same number of eigenvalues in the disc $\set{\abs{z-1} < r}$. 

By a similar argument, $K^u$ has no eigenvalues in any contour that does not contain $0$, and stays at distance at least $r$ from $1$. It follows that $K^u$ has exactly $N$ nonzero eigenvalues (counting multiplicity). 

Recall finally that $K$ and $K^u$ have the same eigenvalues outside a disc of radius 
$\bigpar{\frac12}^{1/m_0}$. It follows that $K^m$ and $(K^u)^m$ have the same number of eigenvalues outside a disc of radius $\bigpar{\frac12}^{m/m_0}$. The choice~\eqref{eq:choice_m} of $m$ implies that this disc does not intersect the disc of radius $r$ centred in $1$, which concludes the proof. 
\qed


\section{Proof of Theorem~\ref{thm:reduction}}
\label{sec:proof_reduction} 


\subsection{Large-deviation estimates for the trace process on $\MN$}
\label{ssec:ldp_MN}

The trace process on $\MN$ is given by the sequence 
$\bigpar{X_{\tau^{+,n}_\MN}}_{n\in\N}$, where 
\begin{equation}
 \tau^{+,1}_\MN = \tau^+_\MN\;, \qquad 
 \tau^{+,n+1}_\MN = \inf\bigsetsuch{m > \tau^{+,n}_\MN}{X_m \in \MN}\;.
\end{equation} 
Owing to Proposition~\ref{prop:ldp_committor}, for any $\eta>0$ there exist 
$\sigma_0(\eta)>0$ and $\delta_0(\eta)>0$ such that  
\begin{equation}
\label{eq:LDP_Kstar} 
\e^{-[H(i,j)+\eta]/\sigma^2}
 \leqs \bigprobin{x}{X_{\tau^+_{\MN}} \in B_j}
 \leqs \e^{-[H(i,j)-\eta]/\sigma^2}
\end{equation} 
holds for all $x\in B_i$, $j\neq i$ and all $\sigma < \sigma_0(\eta)$, provided 
the diameter of the balls $B_\ell$ is bounded by $\delta_0(\eta)$.

We will use several properties of the quasipotential $H$ defined in~\eqref{eq:def_Hij}. First note that $H$ satisfies the triangle inequality 
\begin{equation}
\label{eq:triangle_H} 
 H(i,\ell) + H(\ell,j) \geqs H(i,j) 
 \qquad 
 \forall i, j, \ell \in \set{1,\dots,N}\;,
\end{equation} 
where we have extended $H$ by setting $H(i,i)=0$ for all $i\in\set{1,\dots,N}$.

We call \emph{path} a tuple $\gamma = (\gamma_0,\gamma_1,\dots,\gamma_{p-1},\gamma_p) \in \set{1,\dots,N}^{p+1}$ whose consecutive elements are different. Its \emph{length} is defined to be 
$\abs{\gamma} := p$, and its \emph{cost} is 
\begin{equation}
 V(\gamma) = H(\gamma_0,\gamma_1) + H(\gamma_1,\gamma_2) + \dots H(\gamma_{p-1},\gamma_p)\;.
\end{equation} 
We write $\gamma: i\to j$ if $\gamma_0 = i$ and $\gamma_p = j$. 
We say that $\gamma$ is an \emph{optimal path} from $i$ to $j$, and write $\gamma: i\optto j$, if 
\begin{equation}
 V(\gamma) = H(i,j)
\end{equation} 
and for any $\eps > 0$, there exists a sequence of points in the definition~\eqref{eq:def_Vxy} of the quasipotential that visits all $B_{\gamma_k}$ with $\gamma_k$ an element of $\gamma$, and whose cost is smaller than $H(i,j) + \eps$. Note that there may be more than one optimal path $\gamma: i\optto j$. 

We will also use the notation 
\begin{equation}
\label{eq:Hgap} 
 \Hgap = \min_{\gamma:i\to j, V(\gamma) > H(i,j)}
 \bigbrak{V(\gamma) - H(i,j)}
\end{equation} 
for the minimal difference between the costs of a non-optimal path and an optimal path from $i$ to $j$. The minimum is reached, even though the set of paths $\gamma:i\to j$ is infinite, because 
\begin{equation}
\label{eq:VH0} 
 V(\gamma) \geqs \abs{\gamma} H_0\;,
\end{equation} 
and an optimal path $\gamma: i\optto j$ can have length $N-1$ at most.  

\begin{proposition}
\label{prop:Px_LDP} 
For any $\eta>0$, there exist $\sigma_0(\eta), \delta_0(\eta) > 0$ and a constant $C_N$ depending only on $N$ such that 
\begin{align}
\bigprobin{x}{X_{\tau^{+,n}_{\MN}} \in B_j}
&\leqs \sum_{\gamma:i\optto j}
\binom{n}{\abs{\gamma}} \e^{-[H(i,j)-|\gamma|\eta]/\sigma^2}
+ C_N \e^{-[H(i,j)+\Hgap-N\eta]/\sigma^2}
\\
\bigprobin{x}{X_{\tau^{+,n}_{\MN}} \in B_j}
&\geqs \sum_{\gamma:i\optto j}
\binom{n}{\abs{\gamma}} \e^{-[H(i,j)+|\gamma|\eta]/\sigma^2}
\bigbrak{1 - \e^{-(H_0-\eta)/\sigma^2}}^{n-\abs{\gamma}}
\end{align}
holds for any $i\neq j$, $x\in B_i$ and $n\geqs1$, provided $\sigma < 
\sigma_0(\eta)$ and the diameter of the $B_k$ is bounded by $\delta_0(\eta)$.  
\end{proposition}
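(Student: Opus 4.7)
The strategy is to decompose the probability according to the sequence of metastable balls visited at each return time $\tau^{+,k}_{\MN}$. Iterating the strong Markov property, one obtains
\begin{equation}
\bigprobin{x}{X_{\tau^{+,n}_{\MN}}\in B_j}
= \sum_{\ell_1,\dots,\ell_{n-1}\in\set{1,\dots,N}}
\biggexpecin{x}{\prod_{k=0}^{n-1} \ind{X_{\tau^{+,k+1}_{\MN}}\in B_{\ell_{k+1}}}}
\end{equation}
with $\ell_0=i$ and $\ell_n=j$. Because~\eqref{eq:LDP_Kstar} is uniform in the starting point within each ball, every single-step transition $B_k\to B_\ell$ with $k\neq\ell$ has probability in the interval $[\e^{-[H(k,\ell)+\eta]/\sigma^2},\e^{-[H(k,\ell)-\eta]/\sigma^2}]$, while the stay probability $p(k,k):=1-\sum_{\ell\neq k}p(k,\ell)$ satisfies $p(k,k)\in[1-(N-1)\e^{-[H_0-\eta]/\sigma^2},1]$.

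Next I would group each sequence $(\ell_0,\dots,\ell_n)$ by its \emph{collapsed path} $\gamma=(\gamma_0,\dots,\gamma_{|\gamma|})$, obtained by removing consecutive repetitions. The number of sequences collapsing to a given $\gamma$ of length $p$ equals $\binom{n}{p}$, since one must choose which $p$ of the $n$ steps are the actual transitions. The probability of such a sequence factorises into $p$ transition probabilities (bounded via~\eqref{eq:LDP_Kstar}) and $n-p$ stay probabilities.

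For the upper bound, bound each stay probability by $1$, so that the contribution of paths with collapsed shape $\gamma$ is at most $\binom{n}{|\gamma|}\e^{-[V(\gamma)-|\gamma|\eta]/\sigma^2}$. Summing over optimal paths (necessarily of length at most $N-1$) yields the main term. For non-optimal paths, I would combine $V(\gamma)\geqs H(i,j)+\Hgap$ with the \emph{a priori} bound $V(\gamma)\geqs |\gamma|H_0$ from~\eqref{eq:VH0}, splitting by length: paths of length at most $N$ contribute finitely many terms, each bounded by $\e^{-[H(i,j)+\Hgap-N\eta]/\sigma^2}$ after using $-|\gamma|\eta\geqs-N\eta$; paths of length greater than $N$ are controlled by a geometric series in $\e^{-[H_0-\eta]/\sigma^2}$, summable once the combinatorial prefactor is tamed via $\binom{n}{p}\leqs n^p/p!$. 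For the lower bound, restrict the sum to optimal paths and use the lower bounds from~\eqref{eq:LDP_Kstar}; the product of stay probabilities, bounded below by $(1-(N-1)\e^{-[H_0-\eta]/\sigma^2})^{n-|\gamma|}\geqs\e^{-2n(N-1)\e^{-[H_0-\eta]/\sigma^2}}$, differs from $1$ by a quantity that gets absorbed into the $\e^{-|\gamma|\eta/\sigma^2}$ slack (after initially working with $\eta/2$).

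The main obstacle is presenting the non-optimal contribution to the upper bound as an $n$-independent error: the combinatorial factor $\binom{n}{|\gamma|}$ has to be dominated either by the $\Hgap$-gap for short paths or by the $H_0$-cost per extra transition for long paths, with the two regimes matched around $|\gamma|\approx N$. A secondary technicality is that the trace process is not literally a Markov chain on $\set{1,\dots,N}$, so the uniform bounds of~\eqref{eq:LDP_Kstar} must be invoked afresh at every application of the strong Markov property rather than being iterated purely algebraically.
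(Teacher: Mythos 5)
Your proposal takes essentially the same approach as the paper: both decompose the transition probability by the sequence of visited balls, group trajectories by the associated path $\gamma$ and count the $\binom{n}{\abs{\gamma}}$ placements of jump times, bound the contribution of optimal paths via the single-step estimate~\eqref{eq:LDP_Kstar}, and handle non-optimal paths by splitting at length $N$, invoking the gap $\Hgap$ for short paths and the linear growth~\eqref{eq:VH0} with a geometric series for long ones. The point you flag at the end — that the stay probabilities $(1-\Order{\e^{-(H_0-\eta)/\sigma^2}})^{n-\abs{\gamma}}$ must be absorbed into the $\eta$-slack, and that the non-optimal contribution must be shown $n$-independent — is indeed the delicate part; the paper handles it the same way (implicitly, by using the estimate only in regimes where $n$ is at most of order $\e^{\theta/\sigma^2}$ with $\theta$ small), so your explicit acknowledgement is if anything a slight improvement in transparency.
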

\begin{proof}
To any trajectory $(X_{\tau^{+,k}_{\MN}})_{0\leqs k\leqs n}$ from $x$ to $B_j$, 
we associate a path $\gamma = (i, \ell_1, \dots, \ell_p = j): i\to j$ and an increasing sequence $0 = k_0 < k_1 < k_2 < k_p \leqs n$ of jump times, such that 
\begin{equation}
 X_{\tau^{+,k}_{\MN}} \in B_{\gamma_\ell}
 \qquad \text{for $k_\ell \leqs k < k_{\ell+1}$}\;. 
\end{equation} 
The path $\gamma$ simply indicates the sequence of visited balls. Then we have 
\begin{equation}
\label{eq:proof_transition_n} 
 \bigprobin{x}{X_{\tau^{+,n}_{\MN}} \in B_j} 
 = \sum_{\substack{\gamma:i\to j\\ \abs{\gamma}\leqs n}}
 \; \sum_{0 < k_1 < \dots < k_p}
 Q_{k_1, \dots, k_p}(x)\;,
\end{equation} 
where we have set $p = \abs{\gamma}$, and 
\begin{equation}
 Q_{k_1, \dots, k_p}(x)
 = \bigprobin{x}{X_{\tau^{+,k}_{\MN}} \in B_{\gamma_\ell}, 
 k_\ell \leqs k < k_{\ell+1}, 0 \leqs \ell \leqs p-1}\;.
\end{equation} 
We want to use the fact that the sum~\eqref{eq:proof_transition_n} is dominated by optimal paths $\gamma:i\optto j$. 
Let $\gamma$ be such an optimal path, of length $p$. 
Then \eqref{eq:LDP_Kstar} yields the upper bound 
\begin{equation}
\label{eq:proof_transition_upper} 
 Q_{k_1, \dots, k_p}(x) 
 \leqs \prod_{\ell=0}^{p-1} 
 \sup_{y\in B_{\gamma_\ell}} \bigprobin{y}{X_{\tau^+_{\MN}} \in B_{\gamma_{\ell+1}}} 
 \leqs \prod_{\ell=0}^{p-1} 
 \e^{-[H(\gamma_\ell, \gamma_{\ell+1})-\eta]/\sigma^2}
 \leqs \e^{-[H(i,j)-p\eta]/\sigma^2}\;.
\end{equation} 
As a lower bound, we have 
\begin{align}
 Q_{k_1, \dots, k_p}(x)
&\geqs 
\prod_{\ell=0}^{p-1} 
\biggpar{\inf_{y\in B_{\gamma_\ell}} 
\bigprobin{y}{X_{\tau^+_{\MN}} \in B_{\gamma_{\ell+1}}}}
\prod_{\ell=0}^{p} 
\biggpar{\inf_{y\in B_{\gamma_\ell}} 
\bigprobin{y}{X_{\tau^+_{\MN}} \notin B_{\gamma_\ell}}}^{k_{\ell+1}-k_\ell-1}
\\
&\geqs \bigbrak{1 - \e^{-(H_0-\eta)/\sigma^2}}^{n-p} 
 \e^{-[H(i,j)+p\eta]/\sigma^2}\;.
\label{eq:proof_transition_lower} 
\end{align} 
Since both bounds are independent of the sequence of jump times 
$(k_1,\dots,k_p)$, summing over all these sequences simply multiplies the bounds by their number. This number is exactly the number of compositions of $n+1$ into $p+1$ parts, which is known to be equal to the binomial coefficient $\binom{n}{p}$.

It remains to bound the contribution of non-optimal paths. Here we distinguish the cases $\abs{\gamma}\leqs N$, and $\abs{\gamma} > N$. In the first case, we use~\eqref{eq:Hgap}, while in the second case, we use~\eqref{eq:VH0} and bound the resulting sum by a geometric series. The constant $C_N$ bounds the number of paths of length $N$, and can be taken of order $N^N$. 
\end{proof}


\subsection{The finite rank kernel $K^\star$}
\label{ssec:Kstar} 


In what follows, it will be convenient to use the physicists' bra-ket notation, 
in which a signed measure $\mu$, viewed as a row vector, is denoted 
$\bra{\mu}$, 
while a test function $f$, viewed as a column vector, is denoted $\ket{f}$. 
Recall that the kernel $K^\star$ is defined by 
\begin{equation}
\label{eq:Kstar} 
 K^\star(x,\6y) 
 = \sum_{i=1}^N \ind{x\in B_i} 
\bigprobin{\QSD{i}}{X_{\tau^+_{\MN}}\in \6y}\;.
\end{equation} 
Denote by $\cE^\star_\infty \subset L^\infty(\MN)$ its right image. This is 
an $N$-dimensional vector space, admitting the explicit basis 
\begin{equation}
 \cE^\star_\infty 
 = \vspan(\ket{\indicator{B_1}},\dots,\ket{\indicator{B_N}})\;.
\end{equation} 
In other words, $\cE^\star_\infty$ is the vector space of bounded measurable 
functions which are constant on each $B_i$. In particular, we have 
\begin{equation}
\label{eq:Kstar_braket} 
 \bigpar{K^\star\ket{\indicator{B_j}}}(x) 
 = \sum_{i=1}^N \ind{x\in B_i} \bigprobin{\QSD{i}}{X_{\tau^+_{\MN}}\in B_j}
 = \sum_{i=1}^N \ind{x\in B_i} 
\braketK{\QSD{i}}{K^0}{\indicator{B_j}}
\end{equation} 
where $K^0 = \ptrace{K}{\MN}$ denotes the trace of $K$ on $\MN$. This can 
be rewritten as 
\begin{equation}
\label{eq:KstarBj} 
 K^\star\ket{\indicator{B_j}} 
 = \sum_{i=1}^N \ket{\indicator{B_i}} \braketK{\QSD{i}}{K^0}{\indicator{B_j}}
 = \Pi^\star K^0 \ket{\indicator{B_j}}\;,
\end{equation} 
where 
\begin{equation}
\label{eq:Pistar} 
 \Pi^\star = \sum_{i=1}^N \ketbra{\indicator{B_i}}{\QSD{i}}
\end{equation} 
is the projector on $\cE^\star_\infty$; indeed, $(\Pi^\star)^2 = \Pi^\star$, 
owing to the orthonormality relation 
\begin{equation}
\label{eq:orthonormality} 
 \braket{\QSD{i}}{\indicator{B_j}} = \int_{B_j} \QSD{i}(\6x) = \delta_{ij}\;.
\end{equation}
Relation~\eqref{eq:KstarBj} shows that $K^\star = \Pi^\star K^0$ holds on 
$\cE^\star_\infty$.

Since $K^\star$ involves the QSDs $\QSD{i}$, it is natural to introduce the 
dual space 
\begin{equation}
 \cE^\star_1 
 = \vspan(\bra{\QSD{1}},\dots,\bra{\QSD{N}}) \subset L^1(\MN)\;.
\end{equation}
Note that the kernel $K^\star$, as defined in~\eqref{eq:Kstar}, does not 
necessarily leave $\cE^\star_1$ invariant. This is because even if $X_0$ is 
distributed according to the QSD $\bra{\QSDsmash{i}}$, conditionally on 
$X_1\in B_j$ with $j\neq i$, $X_1$ need not be distributed according to the QSD 
$\bra{\QSDsmash{j}}$. However, the kernel 
\begin{equation}
 \hat K^\star = K^\star \Pi^\star 
 = \sum_{j=1}^N K^\star \ketbra{\indicator{B_j}}{\QSD{j}}
 = \sum_{i,j=1}^N  \ket{\indicator{B_i}} 
\braketK{\QSD{i}}{K^0}{\indicator{B_j}} \bra{\QSD{j}}
\end{equation} 
does leave $\cE^\star_1$ invariant. The probabilistic interpretation of $\hat 
K^\star$ is that it acts as $K^\star$, but in addition it projects the 
distribution of $X_1$ on the QSD $\bra{\QSDsmash{j}}$ whenever $X_1\in B_j$. 
Note that $\Pi^\star K^\star = K^\star$, so that we have 
$(\hat K^\star)^n = (K^\star)^n \Pi^\star$ for all $n\in\N$. 
Since $\hat K^\star \ket{\indicator{B_j}} = K^\star 
\ket{\indicator{B_j}}$, \eqref{eq:KstarBj} implies 
\begin{equation}
\label{eq:matrix_element_Kstar} 
 \braketK{\QSD{i}}{\hat K^\star}{\indicator{B_j}}
 = \braketK{\QSD{i}}{K^\star}{\indicator{B_j}}
 = \braketK{\QSD{i}}{K^0}{\indicator{B_j}}
 = \bigprobin{\QSD{i}}{X_{\tau^+_{\MN}}\in B_j}\;,
\end{equation} 
showing that $\hat K^\star$, $K^\star$ and $K^0$ coincide when viewed as 
kernels acting on the invariant spaces $\cE^\star_\infty$ and $\cE^\star_1$. 

In what follows, we will be interested in processes in which time has been 
sped up by a factor $m = m(\sigma)$. These will involve the kernel 
\begin{equation}
 K^\star_m = \Pi^\star (K^0)^m \Pi^\star\;,
\end{equation} 
which satisfies 
\begin{equation}
\label{eq:Kstar_matrix_elements} 
 \braketK{\QSD{i}}{K^\star_m}{\indicator{B_j}}
 =\braketK{\QSD{i}}{\pth{K^0}^{m}}{\indicator{B_j}}
 = \bigprobin{\QSD{i}}{X_{\tau^{+,m}_{\MN}} \in B_j}\;.
\end{equation} 
The following large-deviation estimate is an immediate consequence of Proposition~\ref{prop:Px_LDP}. 

\begin{lemma}
\label{lem:ldp_msigma} 
Assume $m = m(\sigma)$ satisfies 
\begin{equation}
\label{eq:m_ldp} 
 \lim_{\sigma\to0} \sigma^2 \log m(\sigma) = \theta
\end{equation} 
for some $\theta \in (0,H_0)$. Let $p$ be the length of the longest optimal 
path $\gamma:i\optto j$, and let 
\begin{equation}
 H_\theta(i, j) = H(i, j) - p\theta\;.
\end{equation} 
Then for any $\eta>0$, there exist $\sigma_0(\eta), 
\delta_0(\eta) > 0$ such that 
\begin{equation}
\label{eq:ldp_taum} 
\e^{-(H_\theta(i,j)+\eta)/\sigma^2}
 \leqs \bigprobin{x}{X_{\tau^{+,m}_{\MN}} \in B_j} 
 \leqs \e^{-(H_\theta(i,j)-\eta)/\sigma^2}
\end{equation}
holds for any $i\neq j$ and $x\in B_i$, provided $\sigma < \sigma_0(\eta)$ and the diameter of the $B_k$ is bounded by $\delta_0(\eta)$. 
Furthermore, if\/ $(N-2)\theta \leqs \Hhat_0$ then $H_\theta$ satisfies the triangle inequality 
\begin{equation}
\label{eq:triangle_Htheta}
H_\theta(i,\ell) + H_\theta(\ell,j) \geqs H_\theta(i,j) 
\qquad \forall i, j, \ell\in\set{1,\dots,N}\;.
\end{equation} 
\end{lemma}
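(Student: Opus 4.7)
Both claims are direct consequences of the material already in place. For the large-deviation estimate in~\eqref{eq:ldp_taum}, the plan is to substitute $n=m(\sigma)$ in Proposition~\ref{prop:Px_LDP} and use the scaling hypothesis $\sigma^2\log m(\sigma)\to\theta$. For every optimal path $\gamma:i\optto j$ of length $|\gamma|$ we have $\sigma^2\log\binom{m}{|\gamma|} \to |\gamma|\theta$, so each summand of the upper bound of Proposition~\ref{prop:Px_LDP} behaves logarithmically like $-[H(i,j)-|\gamma|(\theta+\eta')]/\sigma^2$. The exponent is minimised at $|\gamma|=p$ (the longest optimal path), producing exactly the exponent $-[H_\theta(i,j)-p\eta']/\sigma^2$; choosing the auxiliary precision $\eta'$ small compared to $\eta/p$ and noting that $\widehat H_0>0$ makes the $C_N$ correction term negligible yields the upper bound in~\eqref{eq:ldp_taum}. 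For the matching lower bound, it suffices to keep the single summand corresponding to one optimal path of length $p$ and use $\binom{m}{p}\geqs(m-p+1)^p/p!$, which delivers the exponent $-[H_\theta(i,j)+p\eta']/\sigma^2$ up to sub-exponential corrections absorbable in $\e^{\eta/\sigma^2}$.

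For the triangle inequality, the plan is to recast $H_\theta$ as the value of a modified variational problem and then invoke concatenation. Precisely, I would prove that, under $(N-2)\theta\leqs\widehat H_0$,
\begin{equation}
 H_\theta(i,j)
 = \min\bigsetsuch{V(\gamma)-|\gamma|\theta}{\gamma: i \to j \text{ simple}}\;.
 \label{eq:Htheta_variational}
\end{equation}
Once this representation is established, the triangle inequality is immediate: for any simple paths $\gamma_1:i\to\ell$ and $\gamma_2:\ell\to j$, the concatenation $\gamma_1\cdot\gamma_2$ is a path from $i$ to $j$ whose modified cost splits additively, so taking the minimum on each side yields $H_\theta(i,j)\leqs H_\theta(i,\ell)+H_\theta(\ell,j)$.

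To prove~\eqref{eq:Htheta_variational}, the inequality $\leqs$ is obtained by testing with the longest optimal path from $i$ to $j$. For the reverse inequality, I would consider any simple path $\gamma:i\to j$. If $\gamma$ is optimal, then $V(\gamma)=H(i,j)$ and $|\gamma|\leqs p$, so $V(\gamma)-|\gamma|\theta\geqs H_\theta(i,j)$. If $\gamma$ is non-optimal, then $V(\gamma)\geqs H(i,j)+\widehat H_0$ by definition of $\widehat H_0$, and simplicity gives $|\gamma|\leqs N-1$, so
\begin{equation}
 V(\gamma)-|\gamma|\theta
 \geqs H(i,j)+\widehat H_0-(N-1)\theta
 = H_\theta(i,j)+\widehat H_0-(N-1-p)\theta
 \geqs H_\theta(i,j)\;,
\end{equation}
using $p\geqs1$ (so $N-1-p\leqs N-2$) and the standing hypothesis $(N-2)\theta\leqs\widehat H_0$. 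The restriction to simple paths is justified because any loop of length $q$ in a path has cost at least $qH_0$, so cutting it decreases the modified cost $V(\gamma)-|\gamma|\theta$ by at least $q(H_0-\theta)>0$ (recall $\theta\in(0,H_0)$).

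The mild obstacle in this plan is the bookkeeping in the large-deviation step, where one must balance the constants $\eta,\eta',p,N$ and verify that the non-optimal remainder term in Proposition~\ref{prop:Px_LDP} is truly dominated; the combinatorial part of the argument, for the triangle inequality, is clean once one notices the variational characterisation~\eqref{eq:Htheta_variational}.
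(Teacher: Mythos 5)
Your proof is correct. The large-deviation part~\eqref{eq:ldp_taum} is essentially the same argument as the paper's: substitute $n=m$ into Proposition~\ref{prop:Px_LDP}, note that $\sigma^2\log\binom{m}{\abs{\gamma}}\to\abs{\gamma}\theta$ so that the summand with $\abs{\gamma}=p$ dominates the sum over optimal paths, giving exponent $-[H(i,j)-p\theta]/\sigma^2$ up to the tolerance $\eta$, and use $\Hgap>0$ together with $\theta>0$ to absorb the $C_N$ correction.

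For the triangle inequality~\eqref{eq:triangle_Htheta} you take a genuinely different route. The paper argues by a two-case split according to whether $H(i,\ell)+H(\ell,j)$ equals or strictly exceeds $H(i,j)$, comparing the lengths of longest optimal paths for the three pairs directly; in the second case it invokes the bound $\abs{\gamma_1}+\abs{\gamma_2}\leqs N-1$, which requires the concatenation $\gamma_1\cdot\gamma_2$ to visit each index at most once, a property that is not immediate when $\gamma_1\cdot\gamma_2$ is not itself an optimal path. You instead establish the variational representation $H_\theta(i,j)=\min\bigsetsuch{V(\gamma)-\abs{\gamma}\theta}{\gamma\colon i\to j\ \text{simple}}$ under the hypothesis $(N-2)\theta\leqs\Hgap$, after which the triangle inequality falls out by concatenation together with the observation that cutting a loop of length $q$ decreases the modified cost by at least $q(H_0-\theta)>0$. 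This buys you two things: the hypothesis $(N-2)\theta\leqs\Hgap$ enters at exactly one point (to control non-optimal simple paths), and the concatenation step never compares $\abs{\gamma_1}+\abs{\gamma_2}$ against $N-1$, only the length of a single simple path, which is always at most $N-1$. Both routes are valid, but yours isolates the combinatorics more transparently.
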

\begin{proof}
To show~\eqref{eq:ldp_taum}, it suffices to recall that all optimal paths have a length bounded by $N$. Therefore, the binomial coefficient $\smash{\binom{m}{p}}$ is logarithmically equivalent to $m^p$, so that the sum over optimal paths yields a prefactor equivalent to $\e^{p\theta/\sigma^2}$. 

To prove~\eqref{eq:triangle_Htheta}, we distinguish between two cases. 
If $H(i,\ell) + H(\ell,j) = H(i,j)$, then $\ell$ lies on an optimal path $\gamma:i\optto j$. This path is thus the concatenation of optimal paths $\gamma_1:i\optto \ell$ and $\gamma_2: \ell\optto j$, so that 
\begin{equation}
 H_\theta(i,j) = H(i,j) - \bigpar{\abs{\gamma_1} + \abs{\gamma_2}}\theta 
 = H_\theta(i,\ell) + H_\theta(\ell,j)\;.
\end{equation} 
The other possibility is that $H(i,\ell) + H(\ell,j) \geqs H(i,j) + \Hhat_0$. 
For optimal paths $\gamma:i\optto j$, $\gamma_1:i\optto \ell$ and $\gamma_2: \ell\optto j$, 
one gets 
\begin{equation}
 H_\theta(i,\ell) + H_\theta(\ell,j) - H_\theta(i,j) 
 \geqs \Hhat_0 - \bigpar{\abs{\gamma_1} + \abs{\gamma_2} - \abs{\gamma}} \theta\;. 
\end{equation} 
Since $\abs{\gamma_1} + \abs{\gamma_2} \leqs N-1$ and $\abs{\gamma} \geqs 1$, the result follows. 
\end{proof}


\subsection{The truncated kernel $\trunc{K^0}$}
\label{ssec:K0trunc} 

Denote by by $\lambda^0_k$, $\ket{\phi^0_k}$ and $\bra{\pi^0_k}$ the 
orthonormalised eigen-elements of $K^0$, and introduce the truncated kernel 
$\trunc{K^0}$ associated with the $N$ largest eigenvalues. 

We denote the right and left invariant subspaces of $\trunc{K^0}$ by 
\begin{align}
 \cE^0_\infty &= \vspan(\ket{\phi^0_0}, \dots, \ket{\phi^0_{N-1}})\;, \\
 \cE^0_1 &= \vspan(\bra{\pi^0_0}, \dots, \bra{\pi^0_{N-1}})\;. 
\end{align}
Our aim is now to construct another basis of the subspaces $\cE^0_\infty$ and 
$\cE^0_1$, which is close to the basis formed by the QSDs 
$\smash{\bra{\QSD{i}}}$ and the indicators $\ket{\indicator{B_j}}$. A natural 
idea is to set, for some $m\in\Z^*$, 
\begin{equation}
 \bra{\mu_i} = \bra{\QSD{i}} \pth{\trunc{K^0}}^m\;, 
 \qquad 
 \ket{\psi_j} = \pth{\trunc{K^0}}^{-m} \ket{\indicator{B_j}}\;, 
\end{equation} 
where $\pth{\trunc{K^0}}^{-1}$ is the generalised inverse of $\trunc{K^0}$, 
and $\pth{\trunc{K^0}}^{-m} = \pth{\pth{\trunc{K^0}}^{-1}}^m$. Indeed, we 
then have $\bra{\mu_i} \in \cE^0_1$ and $\ket{\psi_j} \in \cE^0_\infty$ by 
construction. Unfortunately, the basis is not orthonormal, because
$\pth{\trunc{K^0}}^{-1} \trunc{K^0} = \trunc{K^0} \pth{\trunc{K^0}}^{-1}
= \Pi^0$, where 
\begin{equation}
\label{eq:def_Pi0} 
 \Pi^0 = \sum_{k=0}^{N-1} \ketbra{\phi^0_k}{\pi^0_k} 
\end{equation}
is the projector on the invariant subspaces of $\trunc{K^0}$. 
Therefore, in general we will have 
\begin{equation}
 \braket{\mu_i}{\psi_j} = \braketK{\QSD{i}}{\Pi^0}{\indicator{B_j}}
 \neq \delta_{ij}\;,
\end{equation} 
A solution to this problem is to modify the definition of $\bra{\QSD{i}}$ and  
$\ket{\psi_j}$ as follows. 

\begin{lemma}
\label{lem:mupsi} 
Let $\Pi^0_\perp = \id - \Pi^0$. 
If\/ $\bra{\QSD{i}}\Pi^0\Pi^\star \neq 0$ for $j=1,\dots, N$, 
then the basis defined by 
\begin{equation}
 \bra{\mu_i} = \bra{\QSD{i}}  \bigbrak{\id - 
\Pi^0_\perp\Pi^\star}^{-1} \Pi^0\;, 
 \qquad 
 \ket{\psi_j} = \Pi^0
 \ket{\indicator{B_j}}
\end{equation} 
satisfies $\braket{\mu_i}{\psi_j} = \delta_{ij}$ for all 
$i,j\in\set{1,\dots,N}$.  Furthermore, 
\begin{equation}
\label{eq:mu_psi_complete} 
 \sum_{i=1}^N \ket{\psi_i}\bra{\mu_i} = \Pi^0\;.
\end{equation} 
\end{lemma}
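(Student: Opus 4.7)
The plan is to reduce the biorthonormality $\braket{\mu_i}{\psi_j} = \delta_{ij}$ to the invertibility of the concrete $N \times N$ matrix
\[
 M_{ij} := \braketK{\QSD{i}}{\Pi^0}{\indicator{B_j}}\;,
 \qquad i,j \in \set{1,\dots,N}\;,
\]
which is ensured by the stated non-degeneracy hypothesis and in turn guarantees that $A := [\id - \Pi^0_\perp \Pi^\star]^{-1}$ is well defined (the operator $\Pi^0_\perp \Pi^\star$ being a finite-rank perturbation of $0$, its invertibility on the full space is controlled by $M$). Two elementary algebraic identities will carry most of the argument: $\bra{\QSD{i}}\Pi^\star = \bra{\QSD{i}}$, which follows from the orthonormality relation~\eqref{eq:orthonormality} and the definition~\eqref{eq:Pistar} of $\Pi^\star$, and $\Pi^0\Pi^0_\perp = 0$.

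The first step is to establish the finite linear system
\[
 \sum_{j=1}^N M_{ij}\,\bra{\mu_j} = \bra{\QSD{i}}\Pi^0\;,
 \qquad i\in\set{1,\dots,N}\;.
\]
For this, I would insert the resolvent identity $A = \id + \Pi^0_\perp\Pi^\star A$ into $\bra{\mu_i} = \bra{\QSD{i}}A\Pi^0$ to obtain $\bra{\mu_i} = \bra{\QSD{i}}\Pi^0 + \bra{\QSD{i}}\Pi^0_\perp\Pi^\star A\Pi^0$. Writing $\Pi^0_\perp = \id - \Pi^0$ and using $\bra{\QSD{i}}\Pi^\star = \bra{\QSD{i}}$ turns the last term into $\bra{\mu_i} - \bra{\QSD{i}}\Pi^0\Pi^\star A\Pi^0$. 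Cancelling $\bra{\mu_i}$ on both sides and expanding $\Pi^\star = \sum_j \ketbra{\indicator{B_j}}{\QSD{j}}$ on the right produces the displayed system. Inverting $M$ then yields $\bra{\mu_i} = \sum_j (M^{-1})_{ij}\bra{\QSD{j}}\Pi^0$, and testing against $\ket{\psi_k} = \Pi^0\ket{\indicator{B_k}}$ while exploiting $(\Pi^0)^2 = \Pi^0$ delivers $\braket{\mu_i}{\psi_k} = \sum_j (M^{-1})_{ij} M_{jk} = \delta_{ik}$.

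For the completeness relation~\eqref{eq:mu_psi_complete}, set $P := \sum_i \ket{\psi_i}\bra{\mu_i}$. The biorthonormality just proved makes $P$ idempotent. Its range $\vspan(\ket{\psi_1},\dots,\ket{\psi_N}) = \Pi^0(\cE^\star_\infty)$ is an $N$-dimensional subspace of $\cE^0_\infty$ by invertibility of $M$, hence it equals $\cE^0_\infty$. In the other direction, the identity $\bra{\mu_i}\Pi^0_\perp = \bra{\QSD{i}}A\Pi^0\Pi^0_\perp = 0$ shows that $P$ annihilates $\mathrm{range}(\Pi^0_\perp) = \ker(\Pi^0)$. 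A projector with the same range and the same kernel as $\Pi^0$ must coincide with $\Pi^0$.

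The only delicate point is the reduction of the invertibility of $\id - \Pi^0_\perp\Pi^\star$ on the possibly infinite-dimensional ambient space to the concrete invertibility of the matrix $M$, which legitimises the cancellation step in paragraph two; the rest is a clean reshuffling of projectors with essentially no analytic content.
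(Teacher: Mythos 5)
Your argument is correct, but it takes a more indirect route than the paper's. The paper's proof of $\braket{\mu_i}{\psi_j} = \delta_{ij}$ is a one-liner: since $\Pi^\star\ket{\indicator{B_j}} = \ket{\indicator{B_j}}$, one has $\Pi^0\ket{\indicator{B_j}} = [\id - \Pi^0_\perp]\ket{\indicator{B_j}} = [\id - \Pi^0_\perp\Pi^\star]\ket{\indicator{B_j}}$, and the factor $[\id - \Pi^0_\perp\Pi^\star]^{-1}$ in $\bra{\mu_i}$ cancels directly, leaving $\braket{\QSD{i}}{\indicator{B_j}} = \delta_{ij}$. You instead derive the linear system $\sum_j M_{ij}\bra{\mu_j} = \bra{\QSD{i}}\Pi^0$, invert the $N\times N$ matrix $M$, and then test. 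Both arguments rely on the same two identities, namely $\bra{\QSD{i}}\Pi^\star = \bra{\QSD{i}}$ (used when you rewrite $\Pi^0_\perp = \id - \Pi^0$ and cancel $\bra{\mu_i}$) and $\Pi^\star\ket{\indicator{B_j}} = \ket{\indicator{B_j}}$ (which is implicit in $(\Pi^0)^2 = \Pi^0$ when you test against $\ket{\psi_k}$). Your route is longer but has the merit of exposing the concrete $N\times N$ matrix $M$ and tying the invertibility of $\id - \Pi^0_\perp\Pi^\star$ to that of $M$, which the paper leaves implicit. For the completeness relation~\eqref{eq:mu_psi_complete} the two arguments are essentially the same: idempotence from biorthonormality, then matching range and kernel with those of $\Pi^0$; you spell out the kernel inclusion via $\bra{\mu_i}\Pi^0_\perp = 0$, while the paper appeals to the sandwich form $\Pi^0 M \Pi^0$.

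One small caveat worth flagging: you write that the stated hypothesis ensures invertibility of $M$, but as written the hypothesis $\bra{\QSD{i}}\Pi^0\Pi^\star \neq 0$ only says each row of $M$ is nonzero, which is strictly weaker than $\det M \neq 0$. The paper's proof has the same gap (it checks the row-nonvanishing condition, then uses the operator inverse). In context this is harmless since the remark following \Cref{cor:epsij} shows $M_{ij} = \delta_{ij} + O(\e^{-[H_0-\eta]/\sigma^2})$, so $M$ is close to the identity, but neither you nor the paper makes the invertibility hypothesis precise as stated.
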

\begin{proof}
Since $\bra{\QSD{i}}\Pi^\star = \bra{\QSD{i}}$, we have 
\begin{equation}
 \bra{\QSD{i}}\bigbrak{\id - \Pi^0_\perp\Pi^\star} 
 = \bra{\QSD{i}} \bigbrak{\id - \Pi^\star + \Pi^0\Pi^\star} 
 = \bra{\QSD{i}}\Pi^0\Pi^\star \neq 0
\end{equation} 
for $j=1,\dots,N$, so that $\bra{\mu_j}$ is indeed well-defined. Furthermore, 
\begin{align}
\braket{\mu_i}{\psi_j} 
&= \braketK{\QSD{i}}{\bigbrak{\id - \Pi^0_\perp\Pi^\star}^{-1} \Pi^0} 
{\indicator{B_j}} \\
&= \braketK{\QSD{i}}{\bigbrak{\id -\Pi^0_\perp\Pi^\star}^{-1}
\bigbrak{\id - \Pi^0_\perp}}{\indicator{B_j}} \\
&= \braketK{\QSD{i}}{\bigbrak{\id - \Pi^0_\perp\Pi^\star}^{-1} 
\bigbrak{\id - \Pi^0_\perp\Pi^\star}} 
{\indicator{B_j}} \\
&= \braket{\QSD{i}}{\indicator{B_j}} \\
&= \delta_{ij}\;,
\end{align}
where we have used the fact that $\Pi^\star\ket{\indicator{B_j}} = 
\ket{\indicator{B_j}}$ to obtain the third line.
As a consequence, the left-hand side of~\eqref{eq:mu_psi_complete} is a projector of rank $N$. Since it is of the form $\Pi^0 M \Pi^0$ for a linear operator $M$, its left and right images are given by $\cE^0_1$ and $\cE^0_\infty$, which implies~\eqref{eq:def_Pi0}. 
\end{proof}


\subsection{Comparison of transition probabilities}
\label{ssec:compare_ptrans} 

Our aim is now to show that for an appropriate $m = m(\sigma)\in\N$, one has 
\begin{equation}
\braketK{\mu_i}{\pth{\trunc{K^0}}^m}{\psi_j}
\sim
\braketK{\QSD{i}}{\pth{K^0}^m}{\indicator{B_j}}
= \bigprobin{\QSD{i}}{X_{\tau^{+,m}_{\MN}}\in B_j}\;.
\end{equation} 
The Neumann series representation of $[\id-\Pi^0_\perp\Pi^\star]^{-1}$ 
and the definition~\eqref{eq:Pistar} of $\Pi^\star$ yield   
\begin{align}
\braketK{\mu_i}{\pth{\trunc{K^0}}^m}{\psi_j}
&= \braketK{\QSD{i}}{\bigbrak{\id-\Pi^0_\perp\Pi^\star}^{-1} 
\pth{\trunc{K^0}}^m}{\indicator{B_j}} \\
&= \sum_{n\geqs0} \braketK{\QSD{i}}{
\pth{\Pi^0_\perp\Pi^\star}^n\pth{\trunc{K^0}}^m}{\indicator{B_j}} \\
&= \braketK{\QSD{i}}{\pth{\trunc{K^0}}^m}{\indicator{B_j}}
+ \sum_{\ell=1}^N \sum_{n\geqs1} 
\braketK{\QSD{i}}{\pth{\Pi^0_\perp\Pi^\star}^n}{\indicator{B_\ell}}
\braketK{\QSD{\ell}}{\pth{\trunc{K^0}}^m}{\indicator{B_j}}\;. 
\label{eq:matrix_element_K0m} 
\end{align}
For an appropriate $m$, the first term on the right-hand side is indeed close 
to $\braketK{\QSD{i}}{\pth{K^0}^m}{\indicator{B_j}}$. We thus need to bound 
the remaining terms. We introduce the notation
\begin{equation}
\label{eq:epsij_Pi} 
 \eps_{ij} = \eps^{(1)}_{ij} 
 = \braketK{\QSD{i}}{\Pi^0_\perp\Pi^\star}{\indicator{B_j}}
 = \braketK{\QSD{i}}{\Pi^\star - \Pi^0}{\indicator{B_j}}
 = \delta_{ij} - \braketK{\QSD{i}}{\Pi^0}{\indicator{B_j}}\;. 
 \end{equation} 
For every $n\geqs2$, \eqref{eq:Pistar} allows us to write 
\begin{equation}
 \eps^{(n)}_{ij} 
 = \braketK{\QSD{i}}{\pth{\Pi^0_\perp\Pi^\star}^n}{\indicator{B_j}}
 = \sum_{\ell_1,\dots,\ell_{n-1}=1}^N 
 \eps_{i\ell_1}\eps_{\ell_1\ell_2}\dots\eps_{\ell_{n-1}j}\;.
\end{equation} 
With these notations, \eqref{eq:matrix_element_K0m} becomes 
\begin{equation}
\label{eq:matrix_element_K0m:2} 
\braketK{\mu_i}{\pth{\trunc{K^0}}^m}{\psi_j}
= \braketK{\QSD{i}}{\pth{\trunc{K^0}}^m}{\indicator{B_j}}
+ \sum_{\ell=1}^N  
\braketK{\QSD{\ell}}{\pth{\trunc{K^0}}^m}{\indicator{B_j}}
\sum_{n\geqs1} \eps^{(n)}_{i\ell}\;. 
\end{equation}
The kernel $K^\star_m = \Pi^\star \pth{K^0}^m \Pi^\star$ has the same image as 
$\hat K^\star$ and $\Pi^\star$. Thus the Riesz projector formalism shows that 
 \begin{equation}
 \label{eq:Pistar_Pi0} 
 \Pi^\star - \Pi^0 
 = \frac{1}{2\pi\icx} \int_{\Gamma} 
 \bigbrak{\pth{z\id - K^\star_m}^{-1} 
 - \pth{z\id - \pth{\trunc{K^0}}^m}^{-1}} \6z \;,
 \end{equation}
provided $\Gamma$ is a contour in the complex plane encircling all (nonzero) 
eigenvalues of $K^\star$ and $\trunc{K^0}$. One option would be to use a 
resolvent identity and a bound on $\norm{K^\star-\trunc{K^0}}$, but this would 
yield an estimate which is uniform in $i, j$, which is not sharp enough for our 
purpose. 

To obtain a sharper bound, we note that the Cayley--Hamilton theorem 
implies that if $K$ is an operator of finite rank $N$, then $K^N$ is a linear 
combination of $\id, K, K^2, \dots, K^{N-1}$. This implies that the resolvent 
of $K$ can also be expressed in terms of a finite number of powers of $K$, as 
shows the following result. 

\begin{lemma}
\label{lem:resolvent_finite} 
Let $K$ be an operator of finite rank $N$, and let $\lambda_1,\dots,\lambda_N$ 
be its nonzero eigenvalues. Then the resolvent of $K$ can be written in the 
form 
\begin{equation}
\label{eq:resolvent_finite} 
 (z\id - K)^{-1} = \frac{1}{c_K(z)} \sum_{n=0}^{N-1} \alpha_n(z)K^n\;,
\end{equation} 
where $c_K(z) = \det(z\id - K) = \prod_{k=1}^N (z-\lambda_k)$ is the 
characteristic polynomial of $K$, and $\alpha_n(z)$ is a polynomial of degree 
$N-1-n$ in $z$. More precisely, one has 
\begin{equation}
 c_K(z) = \sum_{n=0}^N c_n z^n 
 \qquad \Rightarrow \qquad 
 \alpha_n(z) = \sum_{i=0}^{N-1-n} c_{i+n+1} z^i\;.
\end{equation} 
\end{lemma}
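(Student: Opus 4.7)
The plan is to verify the formula by direct polynomial manipulation, reducing it to the Cayley--Hamilton theorem. The starting point is the elementary algebraic identity
\begin{equation*}
 z^n \id - K^n = (z\id - K)\sum_{j=0}^{n-1} z^j K^{n-1-j}\;,
\end{equation*}
valid for any $n \geqs 1$ since $z\id$ commutes with $K$. Applying this term-by-term to the characteristic polynomial $c_K(z) = \sum_{n=0}^N c_n z^n$ gives
\begin{equation*}
 c_K(z) \id - c_K(K) = (z\id - K) \sum_{n=1}^N c_n \sum_{j=0}^{n-1} z^j K^{n-1-j}\;.
\end{equation*}

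The next step is to regroup the double sum by powers of $K$. Substituting $\ell = n - 1 - j$ and collecting terms yields
\begin{equation*}
 \sum_{n=1}^N c_n \sum_{j=0}^{n-1} z^j K^{n-1-j}
 = \sum_{\ell=0}^{N-1} \Biggpar{\sum_{i=0}^{N-1-\ell} c_{i+\ell+1} z^i} K^\ell
 = \sum_{\ell=0}^{N-1} \alpha_\ell(z) K^\ell\;,
\end{equation*}
where the inner polynomial is recognised as precisely $\alpha_\ell(z)$ from the statement. Hence $(z\id - K)\sum_{n=0}^{N-1}\alpha_n(z) K^n = c_K(z) \id - c_K(K)$.

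The decisive step is then to invoke the Cayley--Hamilton theorem to conclude that $c_K(K) = 0$. Since $K$ has rank $N$ and its nonzero spectrum is exactly $\set{\lambda_1, \dots, \lambda_N}$ counted with algebraic multiplicity, the range of $K$ is an $N$-dimensional $K$-invariant subspace on which $K$ acts as an $N\times N$ matrix with characteristic polynomial $c_K$; the classical finite-dimensional Cayley--Hamilton theorem then gives $c_K(K)=0$ on that invariant subspace. Dividing through by $c_K(z) \neq 0$ for $z \notin \set{\lambda_1,\dots,\lambda_N}$ yields the claimed formula on the nontrivial invariant subspace, which is exactly the setting in which the formula is used inside the Riesz projector contour integral~\eqref{eq:Pistar_Pi0}. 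I do not anticipate any serious obstacle: all steps are purely algebraic apart from Cayley--Hamilton, and the only mild subtlety is that the identity is naturally read as an equality of operators on the $N$-dimensional invariant subspace of $K$ rather than on the full ambient space.
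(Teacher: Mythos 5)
Your proof is correct and follows the same route as the paper's one-line argument: both reduce the claim to the identity $(z\id - K)\sum_{n=0}^{N-1}\alpha_n(z)K^n = c_K(z)\id - c_K(K)$ and then invoke Cayley--Hamilton, your telescoping derivation of the $\alpha_n$ being equivalent to the paper's verification of the given formula via the recurrences $z\alpha_n(z) - \alpha_{n-1}(z) = -c_n$. You are also right, and in fact slightly more careful than the paper's terse proof, in flagging the Cayley--Hamilton subtlety: since $c_0 = (-1)^N\prod_k\lambda_k\neq 0$, one only has $c_K(K)=0$ on the $N$-dimensional range of $K$ and not on $\ker K$ (where the left side of~\eqref{eq:resolvent_finite} is $z^{-1}\id$ but the right side is $\alpha_0(z)/c_K(z)\,\id$), and your reading of the lemma as an identity on that invariant subspace is the one the contour-integral computation in~\eqref{eq:Pistar_Pi0} actually uses.
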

\begin{proof}
Multiply~\eqref{eq:resolvent_finite} by $(z\id - K)$ and use the relations 
$z\alpha_n(z) - \alpha_{n-1}(z) = -c_n$, $\alpha_{N-1}(z)=c_N$, 
$z\alpha_0(z)=c_K(z)-c_0$ and $c_K(K)=0$. Also see for instance~\cite{Hou98}, 
which gives an iterative construction implying the above expression for the 
$\alpha_n(z)$. 
\end{proof}

The key estimates that will allow us to control $\eps_{ij}$ are the following two propositions. They will allow us to control the error made when projecting the law of the process on its QSD every $m$ steps, and thus to compare matrix elements involving $\pth{K^0}^m$ and $K^\star_m$. This approach is somewhat related in spirit to the one used  in~\cite{Martinelli_Olivieri_Scoppola_89}. To lighten notations, we write 
\begin{equation}
\label{eq:def_rhoi}
\specgap{i} = \frac{\bigabs{\nextev{i}}}{\pev{i}}
\end{equation} 
for the spectral gap of the trace process killed upon leaving $B_i$, where $\nextev{i}$ is the next-to-leading eigenvalue of this process.

\begin{proposition}
\label{prop_Px_Ppi}
For any $\eta>0$, there exist $\sigma_0(\eta), \delta_0(\eta)>0$ such that 
\begin{equation}
\label{eq:Px_Ppi} 
 \bigprobin{x}{X_{\tau^{+,m}_{\MN}} \in B_j} 
 = \bigprobin{\QSD{i}}{X_{\tau^{+,m}_{\MN}} \in B_j}
 \bigbrak{1 + r_{\eta,m}(\sigma)}
\end{equation} 
holds for any $m\in \N$, any $i, j \in\set{1,\dots,N}$ and any $x\in B_i$, 
provided $\sigma <\sigma_0(\eta)$ and the diameter of the $B_\ell$ is bounded by 
$\delta_0(\eta)$. There exists a constant $C$, independent of $\sigma$, $m$ and 
$\eta$, such that the remainder in~\eqref{eq:Px_Ppi} satisfies 
\begin{equation}
\label{eq:Px_Ppi_error} 
 \bigabs{r_{\eta,m}(\sigma)} \leqs C 
 \Biggbrak{\specgap{i}^{m_1}
 + \frac{\binom{m}{p} - \binom{m-m_1}{p}}{\binom{m-m_1}{p}} 
 \frac{\e^{2p\eta/\sigma^2}}
 {\bigbrak{1-\e^{-(H_0-\eta)/\sigma^2}}^{m-p}}
 + \delta_{ij}m\e^{-(H_0-\eta)/\sigma^2}}
\end{equation} 
for any $m_1 < m$, where $p$ is the length of the longest optimal path $\gamma: i\optto j$. 
\end{proposition}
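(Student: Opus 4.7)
The plan is to split the first $m_1$ steps of the trace process $\tilde X_n := X_{\tau^{+,n}_\MN}$ on $\MN$ according to whether the first exit time $T_i := \inf\bigset{n\geqs 1 : \tilde X_n\notin B_i}$ exceeds $m_1$ or not. On the event $\set{T_i > m_1}$ I would exploit the spectral gap of the trace kernel on $\MN$ killed upon entering $\MN\setminus B_i$, which by \Cref{ass:POS} together with estimates~\eqref{eq:spectral_gap}--\eqref{eq:oscillation_phiA0} forces the law of $\tilde X_{m_1}$ conditioned on $\set{T_i > m_1}$ to converge to $\QSD{i}$ at rate $\specgap{i}^{m_1}$. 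On the complementary event I would rely on the LDP bounds of Proposition~\ref{prop:Px_LDP} to control early excursions out of $B_i$, which produce the second and third terms in~\eqref{eq:Px_Ppi_error}.

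Concretely I would first establish the one-step estimate
\begin{equation}
\int_{B_i} \bigprobin{x}{\tilde X_{m_1}\in\6y,\, T_i > m_1}\,f(y)
= \bigprobin{x}{T_i > m_1}\,\bigexpecin{\QSD{i}}{f}\,
\bigbrak{1 + \bigOrder{\specgap{i}^{m_1}}}
\end{equation}
uniformly in $x\in B_i$, for any bounded measurable $f:B_i\to\R$; uniformity follows from combining the spectral gap~\eqref{eq:spectral_gap} with the oscillation bound~\eqref{eq:oscillation_phiA0} on $\phi_0^{B_i}$. Applying this with $f(y)=\bigprobin{y}{\tilde X_{m-m_1}\in B_j}$ and the strong Markov property at $\tau^{+,m_1}_\MN$ yields
\begin{equation}
\bigprobin{x}{\tilde X_m\in B_j,\,T_i>m_1}
= \bigprobin{x}{T_i > m_1}\,\bigprobin{\QSD{i}}{\tilde X_{m-m_1}\in B_j}\,
\bigbrak{1+\bigOrder{\specgap{i}^{m_1}}}\;,
\end{equation}
and the analogous identity holds exactly (no $\specgap{i}^{m_1}$ correction) when the starting point $x$ is replaced by $\QSD{i}$, since $\QSD{i}$ is the left eigenvector of the killed kernel with eigenvalue $\pev{i}$. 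This generates the first term of~\eqref{eq:Px_Ppi_error}.

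The next step is to compare $\bigprobin{\QSD{i}}{\tilde X_{m-m_1}\in B_j}$ with $\bigprobin{\QSD{i}}{\tilde X_m\in B_j}$ through Proposition~\ref{prop:Px_LDP}. For $i\neq j$, the sum over optimal paths $\gamma:i\optto j$ of length $p$ dominates and gives $\bigprobin{\QSD{i}}{\tilde X_n\in B_j}\sim A_{ij}\binom{n}{p}\e^{-H(i,j)/\sigma^2}$, with matching two-sided bounds up to factors $\e^{\pm p\eta/\sigma^2}$ and additive non-optimal-path corrections of relative order $\e^{-\Hgap/\sigma^2}$. The hockey-stick identity $\sum_{k=1}^{m_1}\binom{m-k}{p-1}=\binom{m}{p}-\binom{m-m_1}{p}$ then identifies the $\binom{m}{p}-\binom{m-m_1}{p}$ "missing" configurations with trajectories exiting $B_i$ before time $m_1$: using the triangle inequality $H(i,\ell)+H(\ell,j)\geqs H(i,j)$ to bound each such trajectory and the lower bound in Proposition~\ref{prop:Px_LDP} to lower-bound $\bigprobin{\QSD{i}}{\tilde X_m \in B_j}$, their relative weight is at most $\bigbrak{\binom{m}{p}-\binom{m-m_1}{p}}/\binom{m-m_1}{p}$ times $\e^{2p\eta/\sigma^2}/[1-\e^{-(H_0-\eta)/\sigma^2}]^{m-p}$, yielding the second term. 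For $i=j$ the optimal path is trivial ($p=0$), both combinatorial differences vanish, and the comparison instead reduces to bounding the probabilities of ever leaving $B_i$ during $m$ steps; these are both of order $m\,\e^{-(H_0-\eta)/\sigma^2}$ by~\eqref{eq:ldp_MN_Bi}, giving the third term.

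The main obstacle is the bookkeeping needed for the second term: one has to track simultaneously the combinatorial multiplicity $\binom{m}{p}$ of optimal paths of length $p$, the LDP prefactors $\e^{\pm p\eta/\sigma^2}$, the correction $\e^{-\Hgap/\sigma^2}$ from non-optimal paths, and the factor $[1-\e^{-(H_0-\eta)/\sigma^2}]^{m-p}$ ensuring that optimal paths are not disturbed by additional excursions between consecutive jumps. Packaging all these contributions into the single clean multiplicative error $r_{\eta,m}(\sigma)$ of~\eqref{eq:Px_Ppi_error} is the technical heart of the proof.
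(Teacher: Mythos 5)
Your proposal is correct and follows essentially the same route as the paper: both decompose $\smash{\bigprobin{x}{X_{\tau^{+,m}_\MN}\in B_j}}$ by separating early ($\leqs m_1$) from late ($>m_1$) departure from $B_i$, use the spectral gap and oscillation bound~\eqref{eq:spectral_gap}--\eqref{eq:oscillation_phiA0} of the killed trace kernel to relax to $\QSD{i}$ on the good part, and invoke the LDP ratio bounds and the binomial count of jump-time sequences with $k_1\leqs m_1$ (your hockey-stick identity) for the bad part. The only cosmetic difference is that you apply the strong Markov property at the deterministic trace-time $m_1$ (which forces the extra horizon comparison $m-m_1\leftrightarrow m$ that you handle combinatorially), whereas the paper conditions directly on the first jump time $k_1$ and compares $Q_k(x)$ with $Q_k(\QSD{i})$ at matching times, so that the $\bigprobin{\QSD{i}}{\cdot}$ prefactor cancels in the ratio without a horizon change; both versions lead to the same error terms.
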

\begin{proof}
In the case $i=j$, the large-deviation principle shows that 
\begin{equation}
 \bigprobin{x}{X_{\tau^{+,m}_{\MN}} \in B_i} 
 = 1 - \bigOrder{m\e^{-(H_0-\eta)/\sigma^2}}\;, 
\end{equation} 
so that the result follows at once by integrating this relation against 
$\QSD{i}(x)$. 

In the case $i\neq j$, we consider first the case where the optimal path from $i$ to $j$ has length $1$. Then the decomposition~\eqref{eq:proof_transition_n}
of the transition probability involves only $m$ terms $Q_k(x)$, 
and the bounds~\eqref{eq:proof_transition_upper} and~\eqref{eq:proof_transition_lower} reduce to  
\begin{equation}
\label{eq:Qk_LDP} 
 \bigbrak{1-\e^{-(H_0-\eta)/\sigma^2}}^{m-1} \e^{-(H(i,j)+\eta)/\sigma^2}
 \leqs Q_k(x) 
 \leqs \e^{-(H(i,j)-\eta)/\sigma^2}
\end{equation} 
uniformly in $k$, which provides an upper bound on the ratio between the 
largest and smallest $Q_k(x)$. We now split the $Q_k(x)$ into \lq\lq bad\rq\rq\ and 
\lq\lq good\rq\rq\ terms, the good ones being those with $k\geqs m_1$ 
chosen sufficiently large that the process has time to relax to the QSD 
$\QSD{i}$ before making the transition to $B_j$. With this splitting, we have 
\begin{align}
\sum_{k=m_1+1}^m Q_k(x)
\leqs \sum_{k=1}^m Q_k(x) 
&\leqs \sum_{k=m_1+1}^m Q_k(x) + \sum_{k=1}^{m_1} Q_k(x) \\
&\leqs \sum_{k=m_1+1}^m Q_k(x) \biggbrak{1+ \frac{m_1}{m-m1} 
\frac{\e^{2\eta/\sigma^2}}{\bigbrak{1-\e^{-(H_0-\eta)/\sigma^2}}^{m-1}}}\;.
\end{align}
In order to derive a sharper estimate for the good $Q_k$, we rewrite them in 
the form 
\begin{equation}
 Q_k(x) = \int_{B_j} \int_{B_i} 
 \bigpar{\mathring{k}^{B_i}}^{k-1}(x,y) k^0(y,z) 
\bigpar{\mathring{K}^{B_j}}^{n-k}(z,B_j) \6y \6z\;.
\end{equation} 
Using the facts that  
\begin{align}
\bigpar{\mathring{k}^{B_i}}^{k-1}(x,y) 
&= \bigpar{\pev{i}}^{k-1} \QSD{i}(y) + 
\bigOrder{\bigabs{\nextev{i}}^{k-1}}\;, \\
\int_{B_i} \QSD{i}(x_1) 
\bigpar{\mathring{k}^{B_i}}^{k-1}(x_1,y) \6x_1 
&= \bigpar{\pev{i}}^{k-1} \QSD{i}(y)\;, 
\end{align}
we obtain 
\begin{equation}
 Q_k(x) = \int_{B_i} \QSD{i}(x_1) Q_k(x_1) \6x_1 
 \Bigbrak{1 + \bigOrder{\specgap{i}^{k-1}}}\;.
\end{equation} 
Collecting terms and bounding the contribution of non-optimal paths as in the proof of 
Proposition~\ref{prop:Px_LDP}, we get  
\begin{equation}
 \frac{\bigprobin{x}{X_{\tau^{+,m}_{\MN}} \in B_j}}
 {\bigprobin{\QSD{i}}{X_{\tau^{+,m}_{\MN}} \in B_j}}   
 = 1 + \bigOrder{\specgap{i}^{k-1}}
 + \BiggOrder{\frac{m_1}{m-m_1} 
\frac{\e^{2\eta/\sigma^2}}
{\bigbrak{1-\e^{-(H_0-\eta)/\sigma^2}}^{m-1}}}
\end{equation} 
as claimed. 
To extend the proof to optimal paths of length larger than $1$, we proceed in an 
analogous way, where the bad terms are those for which $k_1 \leqs m_1$. 
The ratio of binomial coefficients in~\eqref{eq:Px_Ppi_error} is the ratio between bad terms and good terms. 
\end{proof}

\begin{proposition}
\label{prop:expecF}
Fix $i\neq j$ and a bounded, measurable, real-valued test function $F$ supported in $B_j$. There exists a constant $C$ such that for any $x\in B_i$ and any $m_1 < m$, 
one has  
\begin{equation}
\label{eq:bound_EF} 
 \frac{\bigexpecin{x}{F(X_{\tau^{+,m}_{\MN}})}}
 {\bigprobin{x}{X_{\tau^{+,m}_{\MN}} \in B_j}}
 = \bigbrak{1-p_{\eta,m}(\sigma)} 
 \bigexpecin{\QSD{j}}{F} 
 \bigbrak{1 + q_{\eta,m}(\sigma)}
  + p_{\eta,m}(\sigma) \Bigexpecin{x}{
 \bigexpecin{X_{\tau^{+,m}_{\MN}}}{F}}\;,
\end{equation} 
where the error terms satisfy 
\begin{equation}
\label{eq:bounds_pq} 
 \bigabs{p_{\eta,m}(\sigma)} \leqs C 
 \frac{\binom{m}{p} - \binom{m-m_1}{p}}{\binom{m-m_1}{p}} 
 \frac{\e^{2p\eta/\sigma^2}}
{\bigbrak{1-\e^{-(H_0-\eta)/\sigma^2}}^{m-p}}\;, 
\qquad 
 \bigabs{q_{\eta,m}(\sigma)} 
 \leqs C \Bigbrak{\specgap{j}^{m_1} + m\e^{-(H_0-\eta)/\sigma^2}}\;.
\end{equation} 
Here $p$ is again the length of the longest optimal path $\gamma: i\optto j$. 
If $i = j$, then the bound~\eqref{eq:bound_EF} holds with 
$\bigabs{p_{\eta,m}(\sigma)} \leqs m\e^{-(H_0-\eta)/\sigma^2}$. 
\end{proposition}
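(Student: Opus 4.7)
The plan is to adapt the splitting argument from the proof of Proposition~\ref{prop_Px_Ppi} to the new setting, where relaxation must happen in $B_j$ near the endpoint rather than in $B_i$ near the starting point. The case $i = j$ is handled directly via the large-deviation bound $\bigprobin{x}{X_{\tau^{+,m}_\MN} \notin B_i} = O(m\e^{-(H_0-\eta)/\sigma^2})$ exactly as in Proposition~\ref{prop_Px_Ppi}, so I focus on $i \neq j$.

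First, I decompose both the numerator $\bigexpecin{x}{F(X_{\tau^{+,m}_\MN})}$ and the denominator $\bigprobin{x}{X_{\tau^{+,m}_\MN} \in B_j}$ as sums over optimal paths $\gamma: i \optto j$ of length $p$ and jump time sequences $0 < k_1 < \dots < k_p \leqs m$, as in~\eqref{eq:proof_transition_n}. Let $Q_{k_1,\dots,k_p}(x)$ denote the summand for the denominator, and $Q^F_{k_1,\dots,k_p}(x)$ the corresponding summand for the numerator, in which the final integration against the killed kernel of $B_j$ tests against $F$ rather than $\indicator{B_j}$. Non-optimal paths contribute negligible terms by the argument of Proposition~\ref{prop:Px_LDP}. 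I then split the optimal-path configurations into \emph{good} ones with $k_p \leqs m - m_1$, counted by $\binom{m-m_1}{p}$, and \emph{bad} ones with $k_p > m - m_1$, counted by $\binom{m}{p}-\binom{m-m_1}{p}$.

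For good configurations the process has at least $m_1$ trace-steps available in $B_j$ after its last entry, so by the spectral decomposition
$$\bigpar{\mathring{k}^{B_j}}^{m-k_p}(z, w) = \bigpar{\pev{j}}^{m-k_p}\phi^{B_j}_0(z)\,\QSD{j}(w) + O\bigpar{\bigabs{\nextev{j}}^{m-k_p}}$$
together with the oscillation estimate $\norm{\phi^{B_j}_0 - 1} = O(n_0(\sigma)\e^{-(H_0-\eta)/\sigma^2})$ from~\eqref{eq:oscillation_phiA0}, integration against $F$ and $\indicator{B_j}$ yields
$$Q^F_{k_1,\dots,k_p}(x) = Q_{k_1,\dots,k_p}(x)\,\bigexpecin{\QSD{j}}{F}\bigbrak{1 + O\bigpar{\specgap{j}^{m_1} + m\e^{-(H_0-\eta)/\sigma^2}}}\;.$$
Summing over good configurations and over optimal paths produces the first term $[1-p_{\eta,m}]\bigexpecin{\QSD{j}}{F}[1+q_{\eta,m}]$ on the right-hand side with $q_{\eta,m}$ of the stated size.

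For bad configurations, the uniform two-sided estimates~\eqref{eq:proof_transition_upper}--\eqref{eq:proof_transition_lower} bound each $Q_{k_1,\dots,k_p}(x)$ between $\e^{-(H(i,j)+p\eta)/\sigma^2}\bigbrak{1-\e^{-(H_0-\eta)/\sigma^2}}^{m-p}$ and $\e^{-(H(i,j)-p\eta)/\sigma^2}$; the ratio of bad to good contributions in the denominator is therefore at most the claimed bound on $p_{\eta,m}$. Defining $p_{\eta,m}(\sigma)$ as the exact ratio of bad to total weight in the denominator turns the decomposition into an identity, the second term $p_{\eta,m}(\sigma)\Bigexpecin{x}{\bigexpecin{X_{\tau^{+,m}_\MN}}{F}}$ representing the bad-configuration contribution to the ratio. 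The main technical point is ensuring that the spectral-decomposition error is uniform across configurations, which is why the \emph{a priori} oscillation estimate for $\phi^{B_j}_0$ is crucial: the $\phi^{B_j}_0(z_p)$ factors must cancel between $Q^F$ and $Q$ at leading order. The extension from $p = 1$ to $p \geqs 2$ is routine, as the sojourns in intermediate balls $B_{\gamma_r}$, $1\leqs r < p$, contribute identical factors to numerator and denominator.
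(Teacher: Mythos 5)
Your proposal follows essentially the same route as the paper's own proof: both decompose the (numerator and denominator) expectations over the last transition time into $B_j$, split configurations into good ones (last jump time at most $m-m_1$, leaving at least $m_1$ trace-steps for the process to relax inside $B_j$) and bad ones, and use the spectral decomposition of the killed kernel $\mathring{K}^{B_j}$ together with the \emph{a priori} oscillation bound on $\phi^{B_j}_0$ to replace the good-configuration terminal factor by $\bigexpecin{\QSD{j}}{F}$ with relative error $O(\specgap{j}^{m_1})$. The paper packages this via the stopping time $\tau = \inf\{n>0 : X_{\tau^{+,n}_\MN}\in B_j\}-1$ in the $p=1$ case and then extends to general $p$ by counting jump-time sequences exactly as you do, and defines $p_{\eta,m}$ as the conditional probability of a bad configuration, matching your "exact ratio of bad to total weight" — the two arguments are the same in substance.
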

\begin{proof}
For $i\neq j$, let us consider again the case $p=1$ first. 
We introduce the stopping time
\begin{equation}
 \tau = \inf\bigsetsuch{n > 0}{X_{\tau^{+,n}_{\MN}}\in B_j} - 1\;.
\end{equation} 
Since the optimal path from $i$ to $j$ visits $j$ only once, at the very end of the path, $\tau$ will be with overwhelming probability equal to the last time the process visits $\MN\setminus\set{j}$. Then we write 
\begin{equation}
\label{eq:decomp_Ek} 
 \bigexpecin{x}{F(X_{\tau^{+,m}_{\MN}})}
 = \sum_{k=0}^{m-m_1} E_k(x) + \sum_{k=m-m_1+1}^{m-1} E_k(x)\;,
\end{equation} 
where 
\begin{equation}
 E_k(x) = \bigexpecin{x}{\ind{\tau = k} F(X_{\tau^{+,k}_{\MN}})}\;.
\end{equation} 
Note that 
\begin{equation}
 \hat Q_k(x) = \bigprobin{x}{\tau = k} 
 = \bigprobin{x}{X_{\tau^{+,k}_{\MN}} \notin B_j, X_{\tau^{+,k+1}_{\MN}} \in B_j, \dots, X_{\tau^{+,m}_{\MN}} \in B_j}
\end{equation} 
has similar properties as $Q_k(x)$ in the previous proof. In particular, it again satisfies~\eqref{eq:Qk_LDP} owing to the large-deviation principle. In the same spirit as in the previous proof, we consider the $E_k(x)$ in the first sum in~\eqref{eq:decomp_Ek} as good terms, and those in the second sum as bad terms. 

To estimate the good terms, we write for $k\leqs m-m_1$ 
\begin{equation}
 E_k(x) 
 = \int_{B_j} \int_{\MN\setminus B_j} 
 \bigpar{\mathring{k}^{B_i}}^{k-1}(x,y) k^0(y,z) 
\bigpar{\mathring{K}^{B_j}}^{m-k}(z,F) \6y \6z\;, 
\end{equation}
where 
\begin{align}
 (\mathring{K}^{B_j})^{m-k}(z,F) 
 &= \bigexpecin{z}{F(X_{\tau^{+,m-k}_\MN})} \\
 &= (\pev{j})^{m-k} \bigexpecin{\QSD{j}}{F}
 \bigbrak{1 + \Order{\specgap{j}^{m-k}}}\;.
\end{align}
Since $\pev{j} = 1 - \Order{\e^{-(H_0-\eta)/\sigma^2}}$, 
it follows that 
\begin{equation}
 E_k(x) 
 = \bigprobin{x}{\tau = k-1}
 \bigexpecin{\QSD{j}}{F}
 \bigbrak{1 + \Order{\specgap{j}^{m-k}} +  \Order{m\e^{-(H_0-\eta)/\sigma^2}}}\;,
\end{equation} 
so that the sum of good terms satisfies 
\begin{equation}
 \sum_{k=0}^{m-m_1} E_k(x)
 = \bigprobin{x}{\tau \leqs m-m_1}
 \bigexpecin{\QSD{j}}{F}
 \bigbrak{1 + \Order{\specgap{j}^{m_1}} +  \Order{m\e^{-(H_0-\eta)/\sigma^2}}}\;.
\end{equation} 
This implies the result, with 
\begin{equation}
 p_{\eta,m}(\sigma)
 = \bigpcondin{x}{\tau > m-m_1}{ X_{\tau^{+,m}_{\MN}} \in B_j}
 = \frac{\sum_{k=m-m_1+1}^{m-1}\hat Q_k(x)}
 {\sum_{k=0}^{m-1} \hat Q_k(x)}\;,
\end{equation} 
which satisfies indeed the bound~\eqref{eq:bounds_pq} with $p=1$, thanks to~\eqref{eq:Qk_LDP}. The case of general $p$ then follows in a similar way, by counting the number of good and bad terms. 

In the case $i=j$, the result follows by distinguishing the cases where $X_{\tau^{+,k}_\MN}\in B_i$ for all $k\leqs m$, and the unlikely complementary event.  
\end{proof}

\begin{corollary}
\label{cor:eps_ij} 
For any $\eta>0$, there exist $\sigma_0(\eta), \delta_0(\eta)>0$ such that 
\begin{equation}
\Biggabs{\frac{\braketK{\QSD{i}}{\pth{K^0}^{nm}}{\indicator{B_j}}}
{\braketK{\QSD{i}}{\pth{K^\star_m}^n}{\indicator{B_j}}} - 1}
\leqs 
R_{\eta,m,n}(\sigma) 
:= \Bigbrak{1 + q_{\eta,m}(\sigma)+p_{\eta,m}(\sigma)r_{\eta,nm}(\sigma)}^{n-1}
-1
 \label{eq:eps_ij_prop} 
\end{equation} 
holds for all $\sigma<\sigma_0$ and all $i,j$, provided the diameter of the 
$B_k$ is bounded by $\delta_0(\eta)$. 
\end{corollary}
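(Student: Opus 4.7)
The proof will proceed by induction on $n$, exploiting the fact that inserting the projector $\Pi^\star$ between successive blocks of $m$ applications of $K^0$ amounts, probabilistically, to replacing the true (possibly non-relaxed) distribution of the process at time $\tau^{+,km}_\MN$ on the ball it occupies by the corresponding quasistationary distribution $\QSD{\ell}$. Propositions~\ref{prop_Px_Ppi} and~\ref{prop:expecF} quantify the multiplicative error of each such replacement, and the total of $n-1$ insertions compounds to the factor $(1+q+pr)^{n-1}-1$ appearing in~\eqref{eq:eps_ij_prop}.

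The base case $n=1$ reduces to~\eqref{eq:Kstar_matrix_elements}, which gives equality of the two matrix elements and hence $R_{\eta,m,1}=0=(1+q+pr)^0-1$. For the induction step, write $Y_k=X_{\tau^{+,km}_\MN}$ and decompose according to which ball $Y_1$ visits:
\begin{equation*}
 \braketK{\QSD{i}}{\pth{K^0}^{nm}}{\indicator{B_j}}
 = \sum_{\ell=1}^{N} \Bigexpecin{\QSD{i}}{F_\ell(Y_1)}\;,
 \qquad
 F_\ell(y) = \ind{y\in B_\ell}\,\bigprobin{y}{Y_{n-1}\in B_j}\;.
\end{equation*}
Apply Proposition~\ref{prop:expecF} to each $F_\ell$ (supported in $B_\ell$) pointwise in $x\in B_i$, then integrate against $\QSD{i}(\6x)$. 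The dominant ``relaxed'' contribution equals
\begin{equation*}
 (1-p_{\eta,m})(1+q_{\eta,m})
 \sum_{\ell=1}^N
 \braketK{\QSD{i}}{K^\star_m}{\indicator{B_\ell}}\,
 \braketK{\QSD{\ell}}{\pth{K^0}^{(n-1)m}}{\indicator{B_j}}\;,
\end{equation*}
since $\bigexpecin{\QSD{\ell}}{F_\ell}=\braketK{\QSD{\ell}}{\pth{K^0}^{(n-1)m}}{\indicator{B_j}}$. The induction hypothesis replaces each such factor by $\braketK{\QSD{\ell}}{\pth{K^\star_m}^{n-1}}{\indicator{B_j}}$ up to a relative correction bounded by $R_{\eta,m,n-1}$; recombining via $\Pi^\star = \sum_\ell \ketbra{\indicator{B_\ell}}{\QSD{\ell}}$ together with the identity $K^\star_m\Pi^\star=K^\star_m$ produces $(1-p)(1+q)\bigpar{1+R_{\eta,m,n-1}}\braketK{\QSD{i}}{\pth{K^\star_m}^n}{\indicator{B_j}}$.

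The ``non-relaxed'' contribution is $p_{\eta,m}\sum_\ell\Bigexpecin{\QSD{i}}{\bigexpecin{Y_1}{F_\ell}}$. Using the strong Markov property, this inner expectation can be bounded by an $nm$-step transition probability from a (non-equilibrium) point in $B_i$; Proposition~\ref{prop_Px_Ppi}, applied with step count $nm$, then converts it to the QSD-started probability times $1+r_{\eta,nm}$, giving an overall remainder of size $p_{\eta,m}\,r_{\eta,nm}\,\braketK{\QSD{i}}{\pth{K^\star_m}^n}{\indicator{B_j}}$. Collecting all terms, the relative error obeys the one-step recursion
\begin{equation*}
 1 + R_{\eta,m,n}
 \leqs \bigbrak{1 + q_{\eta,m}(\sigma) + p_{\eta,m}(\sigma)\,r_{\eta,nm}(\sigma)}
 \bigpar{1 + R_{\eta,m,n-1}}\;,
\end{equation*}
which iterates, starting from $R_{\eta,m,1}=0$, to the bound claimed in~\eqref{eq:eps_ij_prop}.

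\textbf{Main obstacle.} The delicate step is the handling of the residual term $\Bigexpecin{x}{\bigexpecin{X_{\tau^{+,m}_\MN}}{F_\ell}}$ in Proposition~\ref{prop:expecF}: it represents the contribution of sample paths that have not had time to relax to $\QSD{\ell}$ on the scale $m_1$, and must be rewritten (via strong Markov) as an $nm$-step probability from a non-equilibrium distribution before Proposition~\ref{prop_Px_Ppi} can be invoked uniformly. One must verify that the uniform bounds~\eqref{eq:bounds_pq} on $p_{\eta,m}$ and $q_{\eta,m}$ survive integration against $\QSD{i}$ and can be factored out of the $\ell$-sum at each recursive step, so that the errors genuinely compound \emph{multiplicatively} rather than additively; this is what produces the $(1+q+pr)^{n-1}$ structure of~\eqref{eq:eps_ij_prop} in place of a weaker $(n-1)(q+pr)$ bound, and it is what ultimately renders the error small even for $n$ growing polynomially in $\sigma^{-1}$.
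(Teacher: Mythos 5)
Your proof follows essentially the same route as the paper: same decomposition of $\braketK{\QSD{i}}{(K^0)^{nm}}{\indicator{B_j}}$ into $\sum_\ell\bigexpecin{\QSD{i}}{F_\ell(X_{\tau^{+,m}_\MN})}$, same application of Proposition~\ref{prop:expecF} to each $F_\ell$ followed by Proposition~\ref{prop_Px_Ppi} for the non-relaxed residual, and the same one-step multiplicative recursion iterated from $R_{\eta,m,1}=0$. The only cosmetic difference is that you invoke Proposition~\ref{prop_Px_Ppi} with step count $nm$ rather than $(n-1)m$ (which is what $F_\ell$ actually calls for), but since the corollary's stated bound already uses $r_{\eta,nm}$ and all these error terms are controlled by the same uniform estimate in the relevant regime of $m$, this has no bearing on the conclusion.
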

\begin{proof}
For $n=1$, the result follows from~\eqref{eq:Kstar_matrix_elements} 
with $R_{\eta,m,1}(\sigma) = 0$.
To prove the result for $n\geqs2$, we first observe that 
\begin{align}
\braketK{\QSD{i}}{\pth{K^0}^{nm}}{\indicator{B_j}} 
&= \sum_{\ell=1}^N 
\Bigexpecin{\QSD{i}}{\bigind{X_{\tau^{+,m}_{\MN}} \in B_\ell}
\bigprobin{X_{\tau^{+,m}_{\MN}}}{X_{\tau^{+,(n-1)m}_{\MN}} \in B_j}} \\ 
&= \sum_{\ell=1}^N  
\bigexpecin{\QSD{i}}{F_\ell(X_{\tau^{+,m}_{\MN}})}\;,
\label{eq:proof_prop_epsij_1} 
\end{align}
where
\begin{equation}
 F_\ell(x) = \ind{x\in B_\ell} \bigprobin{x}{X_{\tau^{+,(n-1)m}_{\MN}} \in B_j}\;.
\end{equation} 
Proposition~\ref{prop:expecF} shows that 
\begin{align}
 \bigexpecin{\QSD{i}}{F_\ell(X_{\tau^{+,m}_{\MN}})}
 ={}& \bigprobin{\QSD{i}}{X_{\tau^{+,m}_{\MN}} \in B_\ell} \\
 &{}\times 
 \biggbrak{(1-p_{\eta,m}) \bigexpecin{\QSD{\ell}}{F_\ell} 
 (1 + q_{\eta,m})
  + p_{\eta,m} \Bigexpecin{\QSD{i}}{
   \bigexpecin{X_{\tau^{+,m}_{\MN}}}{F_\ell}}}\;.
\end{align} 
Now we note that 
\begin{equation}
 \bigexpecin{\QSD{\ell}}{F_\ell} 
 = \bigprobin{\QSD{\ell}}{X_{\tau^{+,(n-1)m}_{\MN}} \in B_\ell}\;,
\end{equation} 
while Proposition~\ref{prop_Px_Ppi} implies that for any $x\in B_\ell$, one has 
\begin{equation}
 \bigexpecin{x}{F_\ell} 
 = \bigprobin{\QSD{\ell}}{X_{\tau^{+,(n-1)m}_{\MN}} \in B_\ell}
 \bigbrak{1 + r_{\eta,(n-1)m}}\;.
\end{equation} 
It follows that 
\begin{align}
 \bigexpecin{\QSD{i}}{F_\ell(X_{\tau^{+,m}_{\MN}})}
 &= \bigprobin{\QSD{i}}{X_{\tau^{+,m}_{\MN}} \in B_\ell}
  \bigprobin{\QSD{\ell}}{X_{\tau^{+,(n-1)m}_{\MN}} \in B_\ell} 
  \bigbrak{1+R_n} \\
 &= \braketK{\QSD{i}}{\pth{K^0}^{m}}{\indicator{B_\ell}}
 \braketK{\QSD{\ell}}{\pth{K^0}^{(n-1)m}}{\indicator{B_j}}
 \bigbrak{1+R_n}\;,
 \label{eq:bound_expec_F} 
\end{align}
where the remainder
\begin{equation}
 R_n = (1-p_{\eta,m})(1+q_{\eta,m})+p_{\eta,m}(1+r_{\eta,(n-1)m}) - 1
\end{equation} 
satisfies 
\begin{equation}
 0 \leqs R_n \leqs q_{\eta,m} + p_{\eta,m}r_{\eta,(n-1)m}\;.
\end{equation} 
Summing~\eqref{eq:bound_expec_F} over $\ell$ shows that 
\begin{equation}
 \braketK{\QSD{i}}{\pth{K^0}^{nm}}{\indicator{B_j}} 
 = \braketK{\QSD{i}}{\pth{K^0}^{m}\Pi^\star\pth{K^0}^{(n-1)m}}{\indicator{B_j}} 
 \bigbrak{1+R_n}\;,
\end{equation} 
and the result follows by induction on $n$. 
\end{proof}

\begin{corollary}
\label{cor:epsij} 
Assume $m$ satisfies~\eqref{eq:m_ldp}. Then there exists a constant $C$ such that for any $\eta>0$ 
\begin{equation}
\label{eq:bound_epsij} 
 \bigabs{\eps_{ij}} \leqs C \Bigbrak{\delta_{ij} \e^{-(H_0-\eta)/\sigma^2} 
+ N\e^{-[H_\theta(i,j)-\eta]/\sigma^2}
 \bigbrak{R_{\eta,m,N}(\sigma) + \varrho^m + \e^{-(H_0-\eta)/\sigma^2}}}
\end{equation} 
holds for $1 \leqs i, j \leqs N$, provided $\sigma$ and the $B_k$ are 
sufficiently small as a function of $\eta$. 
\end{corollary}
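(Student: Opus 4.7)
The natural starting point is the Riesz representation~\eqref{eq:Pistar_Pi0}, which expresses $\Pi^\star - \Pi^0$ as a contour integral of the difference of resolvents of $K^\star_m$ and $(\trunc{K^0})^m$ along a contour $\Gamma$ in $\C$ enclosing all nonzero eigenvalues of both operators. Sandwiching the identity between $\bra{\QSD{i}}$ and $\ket{\indicator{B_j}}$, and using that both operators have rank at most $N$ (their images lie in $\cE^\star_\infty$ and $\cE^0_\infty$ respectively), Lemma~\ref{lem:resolvent_finite} rewrites each resolvent as a finite sum of operator powers divided by the corresponding characteristic polynomial. Setting
\begin{equation*}
 M^\star_n := \braketK{\QSD{i}}{(K^\star_m)^n}{\indicator{B_j}}\;, \qquad M^0_n := \braketK{\QSD{i}}{((\trunc{K^0})^m)^n}{\indicator{B_j}}\;,
\end{equation*}
for $n = 0, \dots, N-1$, this reduces $\eps_{ij}$ to a contour integral of a rational function of $z$ whose numerators are linear combinations of $M^\star_n$ and $M^0_n$ and whose denominators are the two characteristic polynomials $c^\star, c^0$.

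The core of the argument is to compare $M^\star_n$ and $M^0_n$ to the common quantity $\braketK{\QSD{i}}{(K^0)^{nm}}{\indicator{B_j}}$. The first comparison is exactly what Corollary~\ref{cor:eps_ij} provides: the relative error between $M^\star_n$ and $\braketK{\QSD{i}}{(K^0)^{nm}}{\indicator{B_j}}$ is at most $R_{\eta,m,n}(\sigma)$. For the second, the identity $(\trunc{K^0})^{nm} = (K^0)^{nm} - (K^0)^{nm}\Pi^0_\perp$ together with the spectral gap of Proposition~\ref{prop:spectral_gap} yields $\bigabs{M^0_n - \braketK{\QSD{i}}{(K^0)^{nm}}{\indicator{B_j}}} = \bigOrder{\varrho^{nm}}$. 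The absolute scale of the common quantity is then given by Lemma~\ref{lem:ldp_msigma}: it is $\bigOrder{\e^{-(H_\theta(i,j)-\eta)/\sigma^2}}$ for $i\neq j$, while for $i=j$ the LDP bound on $\bigprobin{x}{X_{\tau^{+,m}_\MN}\notin B_i}$ furnishes the separate $\delta_{ij}\e^{-(H_0-\eta)/\sigma^2}$ contribution in~\eqref{eq:bound_epsij}. Combining these three ingredients with uniform bounds on the rational expressions $\alpha_n/c$ along $\Gamma$, and using Cauchy's theorem, reproduces the three bracketed error terms; the prefactor $N$ emerges from summing over the $N$ terms of the resolvent expansion.

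The main obstacle is the bookkeeping of the two characteristic polynomials $c^\star$ and $c^0$, whose roots are distinct but pairwise close. One must choose $\Gamma$ as a circle centred at $1$ whose radius lies comfortably above $2(N-1)\e^{-(H_0-\eta)/\sigma^2}$ --- so that all nonzero eigenvalues of both $K^\star_m$ (by~\eqref{eq:bound_normR}) and $(\trunc{K^0})^m$ (by an analogous argument based on Proposition~\ref{prop:spectral_gap}) are enclosed --- yet well below $1-\varrho^m$, so that no outer eigenvalue of $K^0$ is picked up. Only for such a $\Gamma$ does the difference $\alpha^\star_n/c^\star - \alpha^0_n/c^0 = (\alpha^\star_n c^0 - \alpha^0_n c^\star)/(c^\star c^0)$ admit a bound small enough on $\Gamma$ --- by exploiting the closeness of the roots of $c^\star$ and $c^0$, traceable to Corollary~\ref{cor:eps_ij} applied iteratively for $n=1,\dots,N$ --- to yield the $\varrho^m$ factor, rather than the naive $\varrho^{nm}$, in the final error~\eqref{eq:bound_epsij}.
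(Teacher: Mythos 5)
Your proposal follows the same overall architecture as the paper's proof: the Riesz representation~\eqref{eq:Pistar_Pi0}, the finite-rank resolvent expansion of Lemma~\ref{lem:resolvent_finite}, the comparison of $M^\star_n$ to $\braketK{\QSD{i}}{(K^0)^{nm}}{\indicator{B_j}}$ via Corollary~\ref{cor:eps_ij}, the comparison of $M^0_n$ via the spectral gap of Proposition~\ref{prop:spectral_gap}, and the LDP scale from Lemma~\ref{lem:ldp_msigma}. Your discussion of the contour $\Gamma$ — radius large enough to enclose all $N$ nonzero eigenvalues of both operators, small enough to exclude the remainder of the spectrum — makes explicit what the paper states only briefly, and the $N$ prefactor is correctly traced to the sum over the resolvent expansion.

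However, two of your attributions of individual error terms are off. First, the $\delta_{ij}\e^{-(H_0-\eta)/\sigma^2}$ term does \emph{not} come from the LDP bound on $\probin{x}{X_{\tau^{+,m}_\MN}\notin B_i}$ for $i=j$; it comes from the $n=0$ term of the resolvent expansion, where the operator power is the identity. There $M^\star_0 = M^0_0 = \delta_{ij}$ with \emph{no} comparison error, and what remains is $\delta_{ij}$ times $\frac{1}{2\pi\icx}\int_\Gamma\bigbrak{\alpha^\star_0/c^\star - \alpha^0_0/c^0}\6z$; that integral is $\Order{\e^{-(H_0-\eta)/\sigma^2}}$ precisely because the roots of $c^\star$ and $c^0$ are pairwise within $\Order{\e^{-(H_0-\eta)/\sigma^2}}$ of each other. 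Second, the closeness of the characteristic polynomials does not serve to upgrade $\varrho^{nm}$ to $\varrho^m$ (that is immediate since $\varrho^{nm}\leqs\varrho^m$ for $n\geqs1$); rather, for $n\geqs 1$ it is what produces the third error term $\e^{-(H_0-\eta)/\sigma^2}$ inside the bracket of~\eqref{eq:bound_epsij}, via $\alpha^\star_n/c^\star = (\alpha^0_n/c^0)\bigbrak{1+\Order{\e^{-(H_0-\eta)/\sigma^2}}}$ on $\Gamma$. With these two corrections, the accounting matches~\eqref{eq:Riesz_epsij} exactly; the structure of your argument is otherwise sound.
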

\begin{proof}
Let $\Gamma$ be a contour encircling the (nonzero) eigenvalues of $(\trunc{K^0})^m$, and staying at a distance of order $1$ from all eigenvalues of $(K^0)^m$. Note that this is possible for $\sigma$ small enough by Proposition~\ref{prop:spectral_gap}. Recall that $\trunc{K^0} = \Pi^0 K^0$, where $\Pi^0$ is the Riesz projector associated with $\Gamma$. Using~\eqref{eq:epsij_Pi}, \eqref{eq:Pistar_Pi0}, Proposition~\ref{lem:resolvent_finite} and Corollary~\ref{cor:eps_ij}, we obtain  
\begin{align}
 \eps_{ij} 
 ={}& \sum_{n=0}^{N-1} \frac{1}{2\pi\icx} \int_{\Gamma} 
 \braketK{\QSD{i}}
 {\biggbrak{\frac{\alpha^\star_n(z)}{c^\star(z)} \pth{K^\star_m}^n 
 - \frac{\alpha^0_n(z)}{c^0(z)} \pth{\trunc{K^0}}^{nm}}}
 {\indicator{B_j}} \6z \\
 ={}& \sum_{n=1}^{N-1} \frac{1}{2\pi\icx} \int_{\Gamma}
 \biggbrak{\frac{\alpha^\star_n(z)}{c^\star(z)} 
 \bigbrak{1 + \Order{R_{\eta,m,n}(\sigma)}}
 - \frac{\alpha^0_n(z)}{c^0(z)}\bigbrak{1 + \Order{\varrho^{nm}}}} \6z \;
 \braketK{\QSD{i}}{\pth{K^0}^{nm}}{\indicator{B_j}} \\
 &{}+ \frac{1}{2\pi\icx} \int_{\Gamma}
 \biggbrak{\frac{\alpha^\star_0(z)}{c^\star(z)} 
 - \frac{\alpha^0_0(z)}{c^0(z)}} \6z \; \delta_{ij}\;, 
\label{eq:Riesz_epsij} 
\end{align}
where $c^\star(z)$ and the $\alpha^\star_n(z)$ are the coefficients of the decomposition~\eqref{eq:resolvent_finite} of $K^\star_m$, and $c^0(z)$ and the $\alpha^0_n(z)$ are those of the decomposition of $(\trunc{K^0})^m$. The contour $\Gamma$ has been chosen such that the characteristic polynomials $c^\star(z)$ and $c^0(z)$ are bounded away from $0$.  Proposition~\ref{prop:spectral_gap} shows that the eigenvalues of $K^\star$ and $K^0$ are at distance $\Order{\e^{-(H_0-\eta)/\sigma^2}}$ from each other. This shows that $\alpha^\star_n(z)/c^\star(z) = (\alpha^0_n(z)/c^0(z)) [1+\Order{\e^{-(H_0-\eta)/\sigma^2}}]$ on the contour $\Gamma$. 
Hence the result follows from Lemma~\ref{lem:ldp_msigma}. 
\end{proof}

We can now choose a value of $m_1$ yielding the smallest possible error 
terms. Since $N$ is a finite constant, we no longer indicate the dependence 
of the error terms on $N$. The bound~\eqref{eq:bound_lambda_QSD} implies that 
\begin{equation}
 \bigabs{\nextev{i}} \leqs \delta^{1/n_0(\sigma)}
 = \exp\biggset{-\frac{\log(\delta^{-1})}{n_0(\sigma)}}
\end{equation} 
for a constant $\delta < 1$ related to $L$. 
The choice 
\begin{equation}
 m_1 = \frac{H_0n_0(\sigma)}{\log(\delta^{-1})\sigma^2} 
\end{equation} 
then yields
\begin{equation}
\label{eq:bound_sgap} 
\specgap{i}^{m_1} \leqs
 \frac{\bigabs{\nextev{i}}^{m_1}}{\bigpar{\pev{i}}^{m_1}} \leqs 2\e^{-H_0/\sigma^2}\;.
\end{equation} 
Since $m$ is assumed to satisfy~\eqref{eq:m_ldp}, we have 
$2m_1 \leqs m \leqs \e^{(H_0-\eta)/\sigma^2}$ for $\eta$ small enough. 
Further note that for these $m_1$ and $p$ or order $1$, 
\begin{equation}
 \frac{\binom{m}{p} - \binom{m-m_1}{p}}{\binom{m-m_1}{p}} 
 = \biggOrder{p\frac{m_1}{m}}\;.
\end{equation} 
Substituting in~\eqref{eq:Px_Ppi_error} yields 
\begin{equation}
 \bigabs{r_{\eta,m}(\sigma)}
 \leqs C_1 \biggbrak{\e^{-H_0/\sigma^2} 
 + \frac1m \frac{n_0(\sigma)}{\sigma^2}
 \e^{2\eta/\sigma^2}}
 \leqs 2C_1 \e^{-(\theta-3\eta)/\sigma^2}\;. 
\end{equation} 
The error term $q_{\eta,m}(\sigma)$ also satisfies~\eqref{eq:bound_sgap}, 
while $p_{\eta,m}(\sigma)$ is at most of order $r_{\eta,m}(\sigma)$. 
Therefore, 
\begin{equation}
 \bigabs{R_{\eta,m,N}(\sigma)}
 \leqs C_2 \e^{-2(\theta-3\eta)/\sigma^2}\;. 
\end{equation} 
Furthermore, Proposition~\ref{prop:spectral_gap} implies that for these $m$, 
$\varrho^m$ is negligible with respect to $r_{\eta,m}(\sigma)$, provided 
$2\eta < \theta$. Writing 
\begin{equation}
 \Hhat_\theta(i,j) = H_0\delta_{ij} + H_\theta(i,j)(1 - \delta_{ij})\;,
\end{equation} 
we can rewrite the bound~\eqref{eq:bound_epsij} as 
\begin{equation}
 \bigabs{\eps_{ij}} 
 \leqs \e^{-(\Hhat_\theta(i,j) + \theta - 4\eta)/\sigma^2}\;.
\end{equation} 
Note that the fact that the error $R_{\eta,m,N}$ involves the product 
$p_{\eta,m}r_{\eta,m}$ instead of only one of these terms has improved the 
accuracy of the approximation.

\begin{remark}
Corollary~\ref{cor:epsij} shows in particular that the assumption that 
$\bra{\QSD{i}}\Pi^0\Pi^\star \neq 0$, made in Lemma~\ref{lem:mupsi}, is 
satisfied for small enough $\sigma$. Indeed, writing $\Pi^0$ as a contour integral 
as in the proof of the Corollary shows that 
$\braketK{\QSDsmash{i}}{\Pi^0}{\indicator{B_j}} = \delta_{ij} + 
\smash{\Order{\e^{-[H_0-\eta]/\sigma^2}}}$. 
Therefore, $\bra{\QSDsmash{i}}\Pi^0\Pi^\star = 
\sum_{j=1}^N\braketK{\QSDsmash{i}}{\Pi^0}{\indicator{B_j}}\bra{\QSDsmash{j}}$ 
is exponentially close to $\bra{\QSDsmash{i}}$. 
\end{remark}

\begin{corollary}
\label{cor:matrix_element_K0m} 
For any $\eta\in(0,\theta)$, there exist $\sigma_0, \delta_0 > 0$ such that 
for all $1\leqs i\neq j\leqs N$, 
\begin{equation}
 \braketK{\mu_i}{\pth{\trunc{K^0}}^m}{\psi_j}
 = \braketK{\QSD{i}}{\pth{K^0}^m}{\indicator{B_j}}
  \bigbrak{1 + \bigOrder{\e^{-(\theta - \eta)/\sigma^2}}}\;,
\end{equation} 
provided $\sigma < \sigma_0$ and all $B_k$ have a diameter smaller than $\delta_0$. 
\end{corollary}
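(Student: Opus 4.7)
The plan is to start from the identity \eqref{eq:matrix_element_K0m:2}, which expresses $\braketK{\mu_i}{(\trunc{K^0})^m}{\psi_j}$ as a principal term $\braketK{\QSD{i}}{(\trunc{K^0})^m}{\indicator{B_j}}$ plus a double sum involving the iterated error entries $\eps^{(n)}_{i\ell}$. So the proof reduces to two sub-tasks: replace $(\trunc{K^0})^m$ by $(K^0)^m$ inside each matrix element, and show that the correction double sum is multiplicatively small compared to $\braketK{\QSD{i}}{(K^0)^m}{\indicator{B_j}}$.

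For the first sub-task, I would use $\pth{\trunc{K^0}}^m = \pth{K^0}^m \Pi^0$ together with the spectral-gap estimate from Proposition~\ref{prop:spectral_gap}, which gives $\norm{(K^0)^m - (\trunc{K^0})^m} = \bigOrder{\varrho^m}$. Since $m = m(\sigma)$ obeys \eqref{eq:m_ldp} with $\theta > 0$, the bound $\varrho \leqs \exp\set{-c/E_\X(\sigma)}$ combined with Proposition~\ref{prop:ldp_EMN} shows that $\varrho^m$ is much smaller than $\e^{-[H_0+\theta]/\sigma^2}$ once $\sigma$ is small enough, and in particular negligible compared to $\braketK{\QSD{i}}{(K^0)^m}{\indicator{B_j}}$, which by Lemma~\ref{lem:ldp_msigma} is logarithmically equivalent to $\e^{-H_\theta(i,j)/\sigma^2}$.

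For the second sub-task, I would use the already established entrywise bound $\abs{\eps_{ij}} \leqs \e^{-[\Hhat_\theta(i,j)+\theta-4\eta]/\sigma^2}$, and expand
\begin{equation}
 \sum_{n\geqs1}\eps^{(n)}_{i\ell}
 = \eps_{i\ell} + \sum_{n\geqs 2} \sum_{k_1,\dots,k_{n-1}} \eps_{ik_1}\eps_{k_1 k_2}\cdots\eps_{k_{n-1}\ell}\;.
\end{equation}
Each multi-index term can be controlled by iterating the triangle inequality \eqref{eq:triangle_Htheta}, which gives $\Hhat_\theta(i,k_1) + \dots + \Hhat_\theta(k_{n-1},\ell) \geqs \Hhat_\theta(i,\ell)$, so that each $n\geqs 2$ summand is at most $\e^{-[\Hhat_\theta(i,\ell)+n(\theta-O(\eta))]/\sigma^2}$. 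A finite geometric argument (since $N$ is fixed) then yields $\bigabs{\sum_{n\geqs1}\eps^{(n)}_{i\ell}} \leqs C\e^{-[\Hhat_\theta(i,\ell)+\theta-O(\eta)]/\sigma^2}$. Combining this with $\bigabs{\braketK{\QSD{\ell}}{(K^0)^m}{\indicator{B_j}}} \leqs \e^{-[H_\theta(\ell,j)-\eta]/\sigma^2}$ from Lemma~\ref{lem:ldp_msigma} and once more invoking the triangle inequality \eqref{eq:triangle_Htheta} on $H_\theta(i,\ell) + H_\theta(\ell,j) \geqs H_\theta(i,j)$, each term of the $\ell$-sum is of order $\e^{-[H_\theta(i,j)+\theta-O(\eta)]/\sigma^2}$, that is, $\e^{-(\theta-O(\eta))/\sigma^2}$ times the leading term.

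The main bookkeeping obstacle will be the diagonal cases $\ell=i$ and $\ell=j$, where either the factor $\braketK{\QSD{\ell}}{(K^0)^m}{\indicator{B_j}}$ or the factor $\sum_n \eps^{(n)}_{i\ell}$ is close to $1$ and must be treated via the improved estimate $\abs{\eps_{ii}} \leqs \e^{-[H_0-\eta]/\sigma^2}$ (for $\ell=i$) or via the $\Hhat_\theta$-bound with $\Hhat_\theta(i,i)=H_0$ (for $\ell=j$, using $\eps_{ij}$ directly); in both cases one still loses only a factor $\e^{-(\theta-O(\eta))/\sigma^2}$ relative to the leading term because $\theta < H_0$. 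After choosing $\eta$ small enough that all the $O(\eta)$ corrections are dominated by $\eta$, absorbing the constants from the $N$ summands into the multiplicative $\bigOrder{\e^{-(\theta-\eta)/\sigma^2}}$, and shrinking $\sigma_0$ and $\delta_0$ accordingly, the claimed identity follows. \qed
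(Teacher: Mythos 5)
Your proposal is correct and follows essentially the same route as the paper: starting from \eqref{eq:matrix_element_K0m:2}, bounding the iterated error terms $\eps^{(n)}_{i\ell}$ by the triangle inequality for $H_\theta$ and a geometric sum, combining with the large-deviation magnitude of $\braketK{\QSDsmash{\ell}}{(K^0)^m}{\indicator{B_j}}$, and finally replacing $(\trunc{K^0})^m$ by $(K^0)^m$ at the cost of an $\Order{\varrho^{m}}$ term which Proposition~\ref{prop:spectral_gap} makes negligible. Your explicit separation of the diagonal cases $\ell=i$ and $\ell=j$ is a slightly more careful bookkeeping than the paper's compressed statement, but it leads to the same final bound.
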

\begin{proof}
Proceeding by induction on $n$ and using the triangle 
inequality~\eqref{eq:triangle_Htheta}, one easily obtains the bounds 
\begin{equation}
 \bigabs{\eps_{ij}^{(n)}} \leqs \e^{-[\Hhat_\theta(i,j)+n(\theta-4\eta)]/\sigma^2}
 \quad \forall n\geqs 1\;, 
 \qquad \qquad 
 \sum_{n\geqs 1}\bigabs{\eps_{ij}^{(n)}} \leqs 
2\e^{-[\Hhat_\theta(i,j)+\theta-4\eta]/\sigma^2}\;. 
\label{eq:bound_epsijn} 
\end{equation} 
It follows that the remainder in~\eqref{eq:matrix_element_K0m:2} satisfies  
\begin{equation}
\biggabs{\sum_{\ell=1}^N  
\braketK{\QSD{i}}{\pth{\trunc{K^0}}^m}{\indicator{B_\ell}}
\sum_{n\geqs1} \eps^{(n)}_{\ell j}} 
\leqs 2N \e^{-[\Hhat_\theta(i,j)+\theta-5\eta]/\sigma^2}\;,
\end{equation} 
provided $\sigma$ and the $B_k$ are sufficiently small, depending on $\eta$. 
On the other hand, we have 
\begin{equation}
 \braketK{\QSD{i}}{\pth{\trunc{K^0}}^m}{\indicator{B_j}}
 = \braketK{\QSD{i}}{\pth{K^0}^m + \Order{\varrho^m}}{\indicator{B_j}}
 \geqs \e^{-[\Hhat_\theta(i,j)+\eta]/\sigma^2} + \Order{\varrho^m}\;.
\end{equation} 
Proposition~\ref{prop:spectral_gap} shows that for $m$ 
as above, the error term $\Order{\varrho^m}$ is indeed negligible, yielding the 
claimed exponentially small multiplicative error, after redefining $\eta$. 
\end{proof}

The following result shows in which sense the new basis vectors $\bra{\mu_i}$ 
and $\ket{\psi_j}$ are close to $\bra{\QSD{i}}$ and $\ket{\indicator{B_j}}$. 

\begin{proposition}
\label{prop:norms_mupsi} 
The basis vectors satisfy 
\begin{equation}
\label{eq:basis_1} 
 \braket{\mu_i}{\indicator{B_j}} = \delta_{ij}
 \qquad \text{and} \qquad 
 \braket{\QSD{i}}{\psi_j} = \delta_{ij} - \eps_{ij}
\end{equation} 
for all $1\leqs i, j\leqs N$. 
Furthermore, for any $\eta > 0$, one has 
\begin{equation}
\label{eq:basis_2} 
  \norm{\psi_j - \indicator{B_j}}_\infty
 = \sup_{x\in\MN} \bigabs{\psi_j(x) - \indicator{B_j}(x)} 
 \leqs \e^{-[\Hhat_j-\eta]/\sigma^2}
\end{equation}
provided $\sigma$ and the diameters of the $B_i$ are small enough, 
where
\begin{equation}
\label{eq:def_Hhat_j} 
 \Hhat_j = \min_{i\neq j} \biggbrak{H(i,j) - \max_{\gamma: i\optto j}\abs{\gamma}}
 \geqs H_0 -(N-1)\theta\;.
\end{equation} 
\end{proposition}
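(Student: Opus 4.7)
The first identity in~(1) follows from Lemma~\ref{lem:mupsi} once we observe that $\Pi^0$ is idempotent and $\ket{\psi_j} = \Pi^0\ket{\indicator{B_j}}$: indeed, $\braket{\mu_i}{\indicator{B_j}} = \bra{\QSD{i}}\bigbrak{\id-\Pi^0_\perp\Pi^\star}^{-1}\Pi^0\ket{\indicator{B_j}} = \bra{\QSD{i}}\bigbrak{\id-\Pi^0_\perp\Pi^\star}^{-1}\Pi^0\Pi^0\ket{\indicator{B_j}} = \braket{\mu_i}{\psi_j} = \delta_{ij}$. The second identity is immediate from the definition of $\eps_{ij}$ in~\eqref{eq:epsij_Pi}: $\braket{\QSD{i}}{\psi_j} = \braketK{\QSD{i}}{\Pi^0}{\indicator{B_j}} = \delta_{ij} - \eps_{ij}$.

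For~(2), the plan is to introduce the intermediate function $F_j(x) := \bigpar{(K^0)^m\indicator{B_j}}(x) = \bigprobin{x}{X_{\tau^{+,m}_\MN}\in B_j}$ for $m$ satisfying~\eqref{eq:m_ldp}, and to show that $\psi_j$ and $\indicator{B_j}$ are both pointwise exponentially close to $F_j$. For the comparison of $\psi_j$ with $F_j$, I would use the fact that $\Pi^0$ commutes with $K^0$ to decompose $(K^0)^m = (\trunc{K^0})^m + (K^0)^m\Pi^0_\perp$, where the second summand has $L^\infty$-operator norm of order $\varrho^m$ by Proposition~\ref{prop:spectral_gap}; this is super-exponentially small for $m$ of the chosen form. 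On the $N$-dimensional invariant subspace $\cE^0_\infty$, the difference $\bigbrak{(\trunc{K^0})^m - \Pi^0}\indicator{B_j} = \sum_{k=0}^{N-1}\bigbrak{(\lambda^0_k)^m - 1}\phi^0_k \braket{\pi^0_k}{\indicator{B_j}}$ is a finite sum whose coefficients satisfy $\bigabs{(\lambda^0_k)^m - 1} \leqs m\abs{\lambda^0_k - 1} \leqs \e^{-[H_0-\theta-\eta]/\sigma^2}$, again by Proposition~\ref{prop:spectral_gap}; combined with uniform pointwise bounds on the finitely many eigen-elements, this yields $\norm{\psi_j - F_j}_\infty \leqs C\e^{-[H_0-\theta-\eta]/\sigma^2}$.

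The comparison of $F_j$ with $\indicator{B_j}$ is a direct application of Proposition~\ref{prop:Px_LDP} and Lemma~\ref{lem:ldp_msigma}. For $x\in B_i$ with $i\neq j$ the bound $F_j(x) \leqs \e^{-[H_\theta(i,j)-\eta]/\sigma^2}$ is immediate, while for $x\in B_j$ one writes $1 - F_j(x) = \sum_{k\neq j}\bigprobin{x}{X_{\tau^{+,m}_\MN}\in B_k}$ and applies the same bound to each summand. Since $H_\theta(i,j) \geqs H_0 - (N-1)\theta$ for every pair $i\neq j$, both cases are majorised by $\e^{-[H_0-(N-1)\theta-\eta]/\sigma^2}$. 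Combining with the previous step and absorbing the (smaller) error $\e^{-[H_0-\theta-\eta]/\sigma^2}$ gives $\norm{\psi_j - \indicator{B_j}}_\infty \leqs \e^{-[\Hhat_j - \eta]/\sigma^2}$ after a harmless redefinition of $\eta$, while the lower bound $\Hhat_j \geqs H_0 - (N-1)\theta$ follows since the length of any optimal path is at most $N-1$.

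The main obstacle I anticipate is obtaining uniform pointwise control on the higher eigen-elements $\phi^0_k$ for $k=1,\dots,N-1$, since the paper has so far only quantified the oscillation of the principal eigenfunction via~\eqref{eq:oscillation_phiA0}. To sidestep this, I would replace the spectral expansion of $\Pi^0$ by its contour-integral representation $\Pi^0 = \frac{1}{2\pi\icx}\int_\Gamma(z\id-K^0)^{-1}\,\dd z$, where $\Gamma$ encircles the $N$ leading eigenvalues while staying at distance of order $1$ from the remainder of the spectrum (possible by Proposition~\ref{prop:spectral_gap}); one then writes $\psi_j - (\trunc{K^0})^m\indicator{B_j} = \frac{1}{2\pi\icx}\int_\Gamma(1 - z^m)(z\id - K^0)^{-1}\indicator{B_j}\,\dd z$ and reduces the pointwise estimate to uniform bounds on the resolvent, available from the Riesz--Schauder theorem together with the finite-rank representation of Lemma~\ref{lem:resolvent_finite}.
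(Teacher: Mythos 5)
Your proof of~\eqref{eq:basis_1} is correct and matches the paper's: the first identity is $\braket{\mu_i}{\indicator{B_j}} = \braket{\mu_i}{\psi_j} = \delta_{ij}$ because $\Pi^0\ket{\indicator{B_j}} = \ket{\psi_j}$, and the second is immediate from~\eqref{eq:epsij_Pi}.

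For~\eqref{eq:basis_2} you take a genuinely different route and there are two concrete gaps. First, the workaround you propose for controlling the higher eigen-elements does not actually close the gap you identified: you invoke Lemma~\ref{lem:resolvent_finite} to bound $(z\id - K^0)^{-1}$, but that lemma applies only to operators of finite rank, and $K^0$ is not of finite rank. The paper never needs $L^\infty$ resolvent bounds for $K^0$: it writes $\psi_j(x)-\delta_{ij}=\braketK{\delta_x}{\Pi^0-\Pi^\star}{\indicator{B_j}}$ and represents \emph{both} Riesz projectors as contour integrals of resolvents of the genuinely finite-rank operators $(\trunc{K^0})^m$ and $K^\star_m$; Lemma~\ref{lem:resolvent_finite} then converts each resolvent into a finite linear combination of powers $(\trunc{K^0})^{nm}$, $(K^\star_m)^n$ with $0\leqs n\leqs N-1$, whose matrix elements against $\bra{\delta_x}$ and $\ket{\indicator{B_j}}$ are controlled \emph{probabilistically} via Proposition~\ref{prop_Px_Ppi} and Corollary~\ref{cor:eps_ij}. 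That is the device that avoids any pointwise control of the individual $\phi^0_k$, $\pi^0_k$.

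Second, even granting the resolvent step, your intermediate comparison of $\psi_j$ with $F_j=(K^0)^m\indicator{B_j}$ produces only a uniform \emph{additive} error $\e^{-[H_0-\theta-\eta]/\sigma^2}$ — the factor $\abs{(\lambda^0_k)^m-1}\leqs m\abs{\lambda^0_k-1}$ is independent of $i,j$. Since $\Hhat_j\leqs H_0$ always, this is never smaller than $\e^{-[\Hhat_j-\theta-\eta]/\sigma^2}$; you can therefore only recover~\eqref{eq:basis_2} with $\eta$ replaced by $\eta+\theta$, so $\eta$ cannot be taken arbitrarily small for a fixed $\theta>0$, as the statement requires. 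The paper's comparison of $\Pi^0$ with $\Pi^\star$ (rather than with $(K^0)^m$ — note that $\Pi^\star\ket{\indicator{B_j}}=\ket{\indicator{B_j}}$ exactly, so nothing is lost there) yields for each $n$ that $\braketK{\delta_x}{(\trunc{K^0})^{nm}}{\indicator{B_j}}$ and $\braketK{\delta_x}{(K^\star_m)^n}{\indicator{B_j}}$ are both equal to $\bigprobin{\QSD{i}}{X_{\tau^{+,nm}_\MN}\in B_j}$ up to a \emph{multiplicative} error, and this probability already carries the pair-dependent factor $\e^{-H_\theta(i,j)/\sigma^2}$ for $x\in B_i$. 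That is what produces the pair-specific rate $\Hhat_\theta(i,j)$ and hence the stated $\Hhat_j$ after taking the supremum over $x$.
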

\begin{proof}
The relations~\eqref{eq:basis_1} follow immediately from the definitions, since 
\begin{align}
 \braket{\mu_i}{\indicator{B_j}} 
 &= 
\braketK{\QSD{i}}{\bigbrak{\id-\Pi^0_\perp\Pi^\star}^{-1}\Pi^0}{\indicator{B_j}}
= \braket{\mu_i}{\psi_j} = \delta_{ij}\;, \\
\braket{\QSD{i}}{\psi_j} 
&= \braketK{\QSD{i}}{\Pi^0}{\indicator{B_j}}
= \delta_{ij} - \eps_{ij}\;.
\end{align}
In order to prove~\eqref{eq:basis_2}, we proceed as in the proof of 
Corollary~\ref{cor:epsij}, writing for $x\in B_i$ 
\begin{align}
  \psi_j(x)-\delta_{ij} 
 &= \braketK{\delta_x}{\Pi^0 - \Pi^\star}{\indicator{B_j}} \\ 
 &= \sum_{n=0}^{N-1} \frac{1}{2\pi\icx} \int_{\Gamma} 
 \braketK{\delta_x}
 {\biggbrak{\frac{\alpha^0_n(z)}{c^0(z)} \pth{\trunc{K^0}}^{nm}
 - \frac{\alpha^\star_n(z)}{c^\star(z)} \pth{K^\star_m}^n}}
 {\indicator{B_j}} \6z\;.
\end{align}
Propositions~\ref{prop:spectral_gap} and~\ref{prop_Px_Ppi} imply that 
for $0\leqs n\leqs N-1$, one has 
\begin{align}
 \braketK{\delta_x}{\pth{\trunc{K^0}}^{nm}}{\indicator{B_j}}
 &= \bigprobin{x}{X_{\tau^{+,nm}_{\MN}}\in B_j} + \Order{\varrho^{nm}} \\
 &= \bigprobin{\QSD{i}}{X_{\tau^{+,nm}_{\MN}}\in B_j} \bigbrak{1 + 
 r_{\eta,m}(\sigma)} + \Order{\varrho^{nm}}\;, 
\end{align} 
while the definition of $K^\star_m$ implies 
\begin{equation}
 \braketK{\delta_x}{\pth{K^\star_m}^n}{\indicator{B_j}}
 = \braketK{\QSD{i}}{\pth{K^\star_m}^n}{\indicator{B_j}}
 = \bigprobin{\QSD{i}}{X_{\tau^{+,nm}_{\MN}}\in B_j} \bigbrak{1 + 
 r_{\eta,m}(\sigma)}\;.
\end{equation}
Substituting, we find 
\begin{equation}
 \bigabs{\psi_j(x)-\delta_{ij}}
 = \bigOrder{\e^{-[\Hhat_\theta(i,j)-\eta]/\sigma^2}}\;.
\end{equation} 
The expression~\eqref{eq:def_Hhat_j} of $\Hhat_j$ follows from the definition of 
$\Hhat_\theta(i,j)$, and the fact that $\Hhat(i,i) = H_0$.
\end{proof}

\begin{remark}
Getting an $L^1$-estimate on the difference 
$\bra{\mu_i} - \bra{\QSD{i}}$ would require a sharper, pointwise estimate 
on densities, than in Corollary~\ref{cor:eps_ij}. Indeed, we have 
\begin{align}
\bra{\mu_i} - \bra{\QSD{i}}
&= \bra{\QSD{i}} \Pi^0 - \bra{\QSD{i}} + \sum_{n\geqs1} \bra{\QSD{i}} 
\pth{\Pi^0_\perp\Pi^\star}^n \Pi^0 \\
&= \bra{\QSD{i}} \bigbrak{\Pi^0-\Pi^\star} + \sum_{j=1}^N 
\sum_{n\geqs1} \eps_{ij}^{(n)} \bra{\QSD{j}} \Pi^0\;. 
\end{align}
Using~\eqref{eq:bound_epsijn}, one can bound the $L^1$-norm of the double sum 
by an exponentially small term. However, estimating the $L^1$-norm of 
$\bra{\QSD{i}} \bigbrak{\Pi^0-\Pi^\star}$ would require a pointwise estimate 
of 
\begin{equation}
 \bra{\QSD{i}} \bigbrak{\pth{K^0}^{nm} - \pth{K^\star_m}^n}
\end{equation} 
instead of an integral estimate as in Corollary~\ref{cor:eps_ij}.
\end{remark}


\subsection{Proof of the main approximation result}
\label{ssec:approx_main} 

Let $m$ be as in the previous section. 
Define a matrix $P$ of dimension $N\times N$ with elements 
\begin{equation}
\label{eq:approx_01} 
 P_{ij} = \braketK{\mu_i}{\pth{\trunc{K^0}}^m}{\psi_j}\;.
\end{equation} 
Corollary~\ref{cor:matrix_element_K0m} shows that 
\begin{equation}
\label{eq:P_ij} 
 P_{ij} = 
 \bigprobin{\QSD{i}}{X_{\tau^{+,m}_{\MN}}\in B_j}
 \bigbrak{1+\bigOrder{\e^{-(\theta - \eta)/\sigma^2}}}\;.
\end{equation} 

\begin{lemma}
\label{lem:P_stoch} 
$P$ is a stochastic matrix for sufficiently small $\sigma$. 
\end{lemma}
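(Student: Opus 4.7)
The plan is to verify the two defining properties of a stochastic matrix: every row of $P$ sums to one, and every entry is nonnegative. Since both properties follow almost directly from results already established (Lemma~\ref{lem:mupsi}, Proposition~\ref{prop:norms_mupsi} and Corollary~\ref{cor:matrix_element_K0m}), the proof will essentially be a short bookkeeping argument.

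For the row-sum identity, the key observation is that $\ket{\indicator{\MN}} = \sum_{j=1}^N \ket{\indicator{B_j}}$ is the constant function $1$ on $\MN$, and $K^0 = \ptrace{K}{\MN}$ is a \emph{stochastic} kernel on $\MN$, so $K^0 \ket{1} = \ket{1}$. In particular, $\ket{1}$ is a right eigenfunction of $K^0$ with eigenvalue $1$, hence belongs to the invariant subspace $\cE^0_\infty$, so that $\Pi^0 \ket{1} = \ket{1}$ and consequently $\sum_{j=1}^N \ket{\psi_j} = \Pi^0 \sum_j \ket{\indicator{B_j}} = \ket{1}$. Since $\trunc{K^0}$ also fixes $\ket{1}$, one obtains
\begin{equation}
\sum_{j=1}^N P_{ij}
= \biggbraket{\mu_i}{\pth{\trunc{K^0}}^m \sum_{j=1}^N \psi_j}
= \braket{\mu_i}{1}
= \sum_{j=1}^N \braket{\mu_i}{\indicator{B_j}}
= 1\;,
\end{equation}
where the last equality uses the duality relation~\eqref{eq:basis_1} from Proposition~\ref{prop:norms_mupsi}.

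For nonnegativity, I would handle the off-diagonal and diagonal entries separately. For $i\neq j$, Corollary~\ref{cor:matrix_element_K0m} gives
\begin{equation}
P_{ij} = \bigprobin{\QSD{i}}{X_{\tau^{+,m}_{\MN}}\in B_j}
\bigbrak{1+\bigOrder{\e^{-(\theta - \eta)/\sigma^2}}}\;,
\end{equation}
and the multiplicative error tends to zero with $\sigma$, so the bracket is strictly positive for $\sigma$ small enough; the probability itself is obviously nonnegative, so $P_{ij}\geqs 0$. Moreover, Lemma~\ref{lem:ldp_msigma} shows $P_{ij}$ is exponentially small. Applying the row-sum identity then yields
\begin{equation}
P_{ii} = 1 - \sum_{j\neq i} P_{ij} = 1 - \Order{\e^{-(H_0-\eta)/\sigma^2}}\;,
\end{equation}
which is close to $1$ and in particular nonnegative.

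There is no real obstacle here: everything has been prepared by the preceding corollary. The only subtle point worth stating carefully is that nonnegativity of the diagonal is not obtained directly from the asymptotics of $\braketK{\mu_i}{(\trunc{K^0})^m}{\psi_i}$ (the corollary excludes $i=j$), but rather as a consequence of the exact row-sum identity combined with the exponential smallness of the off-diagonal entries.
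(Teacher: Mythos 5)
Your proof is correct and follows essentially the same route as the paper: the row-sum identity is obtained by observing that $\sum_j\ket{\indicator{B_j}}$ is the constant function $\ket{\phi^0_0}$, which is fixed by $\Pi^0$ and by $\trunc{K^0}$, and then using the duality relation $\braket{\mu_i}{\indicator{B_j}}=\delta_{ij}$. You actually give more detail than the paper on nonnegativity, which the paper simply asserts as ``clearly positive''; your observation that \Cref{cor:matrix_element_K0m} only covers $i\neq j$, so that $P_{ii}\geqs0$ must be deduced from the row-sum identity together with the exponential smallness of the off-diagonal entries, is the right way to fill that gap (the precise exponent is $\Hhat_{\min}-\eta$ rather than $H_0-\eta$, but that does not affect the conclusion).
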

\begin{proof}
First note that $\sum_{j=1}^N \ket{\indicator{B_j}}=\ket{\phi^0_0}$, since both 
are identically equal to $1$. It follows that 
\begin{equation}
 \sum_{j=1}^N \ket{\psi_j} = \Pi^0 \sum_{j=1}^N \ket{\indicator{B_j}}
 = \Pi^0 \ket{\phi^0_0} = \ket{\phi^0_0}\;,
\end{equation} 
and thus 
\begin{equation}
\sum_{j=1}^N P_{ij} 
= \braketK{\mu_i}{\pth{\trunc{K^0}}^m}{\phi^0_0} 
= \braket{\mu_i}{\phi^0_0} 
= \sum_{j=1}^N \braket{\mu_i}{\psi_j} 
= 1\;. 
\end{equation}
Furthermore, \eqref{eq:P_ij} implies that the $P_{ij}$ are positive 
if $\sigma$ is small enough. Note that, as shown in Corollary~\ref{cor:matrix_element_K0m}, 
this is a consequence of the fact that $m$ diverges as $\sigma$ goes to $0$, owing to~\eqref{eq:m_ldp}.
\end{proof}

Let $(Y_n)_{n\geqs 0}$ be the Markov chain with transition matrix $P$. Then Theorem~\ref{thm:reduction} follows directly from Theorem~\ref{thm:approx} below. Here expectations and probabilities with respect to a signed measure are interpreted as differences of these quantities with respect to the positive and negative parts of that measure. 

\begin{theorem}
\label{thm:approx} 
If $X_n$ starts with the (signed) distribution $\mu_i$, then 
\begin{equation}
\label{eq:thm_reduc1} 
 \Bigexpecin{\mu_i}{\psi_j \pth{X_{\tau^{+,nm}_{\MN}}}}
 = \bigprobin{i}{Y_n = j}
\end{equation} 
holds for all $n\geqs0$ and all $j\in\set{1,\dots,N}$. As a consequence, 
\begin{equation}
\label{eq:thm_reduc2} 
 \bigprobin{\mu_i}{X_{\tau^{+,nm}_{\MN}} \in B_j} 
 = \bigprobin{i}{Y_n = j}\bigbrak{1 + \bigOrder{\e^{-[\Hhat_j-\eta]/\sigma^2}}}
 + \bigprobin{i}{Y_n \neq j}  \bigOrder{\e^{-[\Hhat_j-\eta]/\sigma^2}}
\end{equation}
for any $\eta > 0$, provided $\sigma$ and the $B_i$ are small enough. 
Furthermore, for all $x\in B_i$, one has 
\begin{equation}
\label{eq:thm_reduc3} 
 \bigprobin{x}{X_{\tau^{+,nm}_{\MN}} \in B_j} 
 = \bigprobin{i}{Y_n = j} + \bigOrder{\e^{-[\Hhat_{\min} - \eta])/\sigma^2}}
 + \bigOrder{\varrho^{nm}}\;,
\end{equation} 
where $\Hhat_{\min} = \min_\ell\Hhat_\ell \geqs H_0 - (N-1)\theta$. 
\end{theorem}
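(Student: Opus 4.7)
The plan is to prove~\eqref{eq:thm_reduc1} as the key algebraic identity by exploiting that $\Pi^0$ commutes with $K^0$, then deduce~\eqref{eq:thm_reduc2} by substituting the test function $\psi_j$ with $\indicator{B_j}$, and finally deduce~\eqref{eq:thm_reduc3} by decomposing a Dirac mass at $x\in B_i$ in the basis $\bra{\mu_k}$ plus a spectral remainder.

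For~\eqref{eq:thm_reduc1}, I rewrite the left-hand side as $\braketK{\mu_i}{\pth{K^0}^{nm}}{\psi_j}$. Since $\Pi^0$ is the Riesz projector associated with the $N$ leading eigenvalues of $K^0$, it commutes with $K^0$; combined with Lemma~\ref{lem:mupsi}, which gives $\bra{\mu_i}=\bra{\mu_i}\Pi^0$ and $\ket{\psi_j}=\Pi^0\ket{\indicator{B_j}}$, this yields $\braketK{\mu_i}{\pth{K^0}^{nm}}{\psi_j}=\braketK{\mu_i}{\pth{\trunc{K^0}}^{nm}}{\psi_j}$. Inserting the resolution~\eqref{eq:mu_psi_complete}, $\Pi^0=\sum_k\ket{\psi_k}\bra{\mu_k}$, between consecutive blocks of $m$ iterates of $\trunc{K^0}$ and invoking the definition~\eqref{eq:approx_01} of $P$, an induction on $n$ produces $(P^n)_{ij}=\bigprobin{i}{Y_n=j}$, which is~\eqref{eq:thm_reduc1}.

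For~\eqref{eq:thm_reduc2}, I split $\bigprobin{\mu_i}{X_{\tau^{+,nm}_{\MN}}\in B_j}=\bigexpecin{\mu_i}{\psi_j\pth{X_{\tau^{+,nm}_{\MN}}}}+\bigexpecin{\mu_i}{\pth{\indicator{B_j}-\psi_j}\pth{X_{\tau^{+,nm}_{\MN}}}}$. The first term equals $\bigprobin{i}{Y_n=j}$ by~\eqref{eq:thm_reduc1}, while the second is bounded in absolute value by $\norm{\indicator{B_j}-\psi_j}_\infty\cdot|\mu_i|(\MN)\leqs C\e^{-[\Hhat_j-\eta]/\sigma^2}$ using Proposition~\ref{prop:norms_mupsi} together with the uniform bound on $|\mu_i|(\MN)$ coming from the convergent Neumann series defining $[\id-\Pi^0_\perp\Pi^\star]^{-1}$ in Lemma~\ref{lem:mupsi}. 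Distributing $1=\bigprobin{i}{Y_n=j}+\bigprobin{i}{Y_n\neq j}$ then rewrites the additive error in the multiplicative-plus-additive form of~\eqref{eq:thm_reduc2}. For~\eqref{eq:thm_reduc3}, I decompose $\bra{\delta_x}=\bra{\delta_x}\Pi^0+\bra{\delta_x}(\id-\Pi^0)$; the projected part equals $\sum_{k=1}^N\psi_k(x)\bra{\mu_k}$ by~\eqref{eq:mu_psi_complete} and $\psi_k(x)=\braket{\delta_x}{\psi_k}$. Applying~\eqref{eq:thm_reduc2} together with $|\psi_k(x)-\delta_{ik}|\leqs\e^{-[\Hhat_k-\eta]/\sigma^2}$ for $x\in B_i$ (Proposition~\ref{prop:norms_mupsi}) gives $\sum_k\psi_k(x)\bigprobin{k}{Y_n=j}=\bigprobin{i}{Y_n=j}+\bigOrder{\e^{-[\Hhat_{\min}-\eta]/\sigma^2}}$, and the complementary piece $\braketK{\delta_x}{(\id-\Pi^0)\pth{K^0}^{nm}}{\indicator{B_j}}$ is $\bigOrder{\varrho^{nm}}$ since by Proposition~\ref{prop:spectral_gap} the spectrum of $K^0$ on the range of $\id-\Pi^0$ is contained in the disc of radius $\varrho$.

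The main technical obstacle is the last operator-norm estimate $\bignorm{(\id-\Pi^0)\pth{K^0}^{nm}}_{\infty\to\infty}=\bigOrder{\varrho^{nm}}$: Proposition~\ref{prop:spectral_gap} only supplies a spectral-radius bound. The natural way to upgrade it is via the Riesz contour representation $(\id-\Pi^0)\pth{K^0}^{nm}=\frac{1}{2\pi\icx}\oint_\Gamma z^{nm}(z\id-K^0)^{-1}\,\6z$ on a circle $\Gamma$ of radius slightly larger than $\varrho$, combined with a uniform resolvent bound on $\Gamma$; any polynomial prefactors produced by Jordan blocks in the complementary spectrum are absorbed by the slight enlargement of $\varrho$ after redefining $\eta$. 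Secondary issues are controlling $|\mu_i|(\MN)$ uniformly in $\sigma$ (which follows from $\norm{\Pi^0_\perp\Pi^\star}<1$ for small $\sigma$, as implicit in Section~\ref{ssec:compare_ptrans}) and consolidating the various exponentially small remainders into a single $\eta$-dependent bound.
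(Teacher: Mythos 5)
Your proof of~\eqref{eq:thm_reduc1} takes the same route as the paper: rewrite $\Bigexpecin{\mu_i}{\psi_j(X_{\tau^{+,nm}_\MN})}$ as $\braketK{\mu_i}{(K^0)^{nm}}{\psi_j}$, use that $\bra{\mu_i}$ and $\ket{\psi_j}$ lie in the range of $\Pi^0$ and that $\Pi^0$ commutes with $K^0$ to replace $K^0$ by $\trunc{K^0}$, and then insert the completeness relation~\eqref{eq:mu_psi_complete} between blocks of $m$ factors to obtain $(P^n)_{ij}$. Your treatment of~\eqref{eq:thm_reduc3} --- decomposing $\bra{\delta_x}$ along $\Pi^0$ and $\id-\Pi^0$ and handling the two pieces via \Cref{prop:norms_mupsi} and the spectral gap --- also matches the paper's proof.

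For~\eqref{eq:thm_reduc2} you depart from the paper. The paper writes $P^n_{ij}=\sum_\ell Q^n_{i\ell}[\delta_{\ell j}+r_{\ell j}]$, treats this as the matrix equation $P^n=Q^n[\id+R]$, and inverts via a Neumann series; you instead estimate the additive error $\bigexpecin{\mu_i}{(\indicator{B_j}-\psi_j)(X_{\tau^{+,nm}_\MN})}$ directly by $\norm{\indicator{B_j}-\psi_j}_\infty\cdot\norm{\mu_i}_1$. The weak link is the required uniform bound on $\norm{\mu_i}_1$: you assert it follows from $\norm{\Pi^0_\perp\Pi^\star}<1$, but since $\Pi^0_\perp\Pi^\star=(\Pi^\star-\Pi^0)\Pi^\star$ and $\norm{\Pi^\star}_{L^1\to L^1}\leqs1$, this is the same as bounding $\norm{\Pi^0-\Pi^\star}_{L^1\to L^1}$, and the Remark following \Cref{prop:norms_mupsi} explicitly states that an $L^1$ estimate on $\bra{\QSD{i}}(\Pi^0-\Pi^\star)$ is \emph{not} obtained in the paper (it would require pointwise density estimates rather than the integral estimates of \Cref{cor:eps_ij}). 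As written, this step is therefore a gap. It can however be avoided altogether: since $\bra{\mu_i}\Pi^0=\bra{\mu_i}$ while $\Pi^0\ket{\indicator{B_j}-\psi_j}=\ket{\psi_j}-\ket{\psi_j}=0$, and the Riesz projector $\Pi^0$ commutes with $(K^0)^{nm}$, the quantity $\braketK{\mu_i}{(K^0)^{nm}}{\indicator{B_j}-\psi_j}$ vanishes identically, so no bound on $\norm{\mu_i}_1$ is needed (and the error terms in~\eqref{eq:thm_reduc2} are in fact zero when one starts from $\mu_i$). Finally, your remark that the term $\braketK{\delta_x}{(K^0_\perp)^{nm}}{\indicator{B_j}}$ needs an operator-norm bound rather than a spectral-radius bound is a legitimate point that the paper glosses over; your Riesz-contour fix is the standard one, though one should still check that the resolvent on the chosen contour is controlled uniformly enough in $\sigma$ to be absorbed into a redefinition of $\eta$.
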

\begin{proof}
The first claim~\eqref{eq:thm_reduc1} follows from~\eqref{eq:approx_01} by 
taking the $n$th power of $P$, and using the completeness relation~\eqref{eq:mu_psi_complete}.
The second claim~\eqref{eq:thm_reduc2} is a 
consequence of the decomposition 
\begin{equation}
 \bigprobin{i}{Y_n = j} 
 = \int_{B_j} \bigprobin{\mu_i}{X_{\tau^{+,nm}_{\MN}} \in \6y} \psi_j(y) 
 + \sum_{\ell\neq j}
 \int_{B_\ell} \bigprobin{\mu_i}{X_{\tau^{+,nm}_{\MN}} \in \6y} 
\psi_j(y)\;.
\end{equation}
Indeed, writing $P^n_{ij}$ for the left-hand side and 
$Q^n_{ij} = \bigprobin{\mu_i}{X_{\tau^{+,nm}_{\MN}} \in B_j}$, 
Proposition~\ref{prop:norms_mupsi} yields 
\begin{equation}
 P^n_{ij} = \sum_{\ell = 1}^N Q^n_{i\ell} \bigbrak{\delta_{lj} + r_{lj}}\;,
\end{equation} 
where $r_{lj} = \Order{\e^{-[\Hhat_j-\eta]/\sigma^2}}$ for all $\ell$. This is equivalent to the matrix equation $P^n = Q^n[\id + R]$, which can be inverted using the Neumann series for $[\id + R]^{-1}$. The resulting expression of $Q^n$ in terms of $P^n$ and $R$ is equivalent to~\eqref{eq:thm_reduc2}.

To obtain~\eqref{eq:thm_reduc3}, we write 
\begin{align}
\bigprobin{x}{X_{\tau^{+,nm}_{\MN}} \in B_j}
&= \braketK{\delta_x}{(K^0)^{nm}}{\indicator{B_j}} \\
&= \braketK{\delta_x}{(\trunc{K^0})^{nm}}{\indicator{B_j}} 
+ \braketK{\delta_x}{(K^0_\perp)^{nm}}{\indicator{B_j}}\;,
\end{align}
where $K^0_\perp = K^0 - \trunc{K^0}$. The second term on the right-hand side 
decreases like the $nm$th power of the spectral gap $\varrho$. As for the first 
term, it can be written 
\begin{align}
\braketK{\delta_x}{(\trunc{K^0})^{nm}}{\indicator{B_j}}
&= \braketK{\delta_x}{\Pi^0(\trunc{K^0})^{nm}}{\indicator{B_j}} \\
&= \sum_{\ell=1}^N \braket{\delta_x}{\psi_\ell} 
\braketK{\mu_\ell}{(\trunc{K^0})^{nm}}{\indicator{B_j}} \\
&= \sum_{\ell=1}^N \psi_\ell(x) \bigprobin{\mu_\ell}{X_{\tau^{+,nm}_{\MN}} 
\in B_j}\;.
\end{align}
If $x\in B_i$, the term $\ell=i$ can be estimated by~\eqref{eq:thm_reduc2}, 
while the other terms are exponentially small by Proposition~\ref{prop:norms_mupsi}.
\end{proof}


\appendix

\section{Other proofs for Section~\ref{sec:setup}} 
\label{sec:proof_setup} 

\subsection{Proof of Proposition~\ref{prop:ldp_committor}}

Since $I$ is continuous at $(x^\star_i,x^\star_i)$ and $(x^\star_j,x^\star_j)$ 
and $I(x^\star_i,x^\star_i)=I(x^\star_j,x^\star_j)=0$, we can find $\delta>0$ 
such that $I(y_1,y_2) \leqs \eta/6$ for all $y_1, y_2\in B_i$, and similarly 
for points $z_1, z_2 \in B_j$. This implies that 
\begin{equation}
 H(i,j) - \frac{\eta}2 \leqs V(y,z) \leqs H(i,j) + \frac{\eta}2
\end{equation} 
holds for all $y\in B_i$ and all $z\in B_j$. 
Consider now the increasing sequence of events 
\begin{equation}
 \Gamma_n = \bigset{\tau^+_{B_j}(x) < \tau^+_{B_i}(x), \tau^+_{B_j}(x) \leqs n} 
 = \bigcup_{m=1}^n \Bigpar{\bigbrak{(B_i\cup B_j)^c}^{m-1} \times B_j \times 
\X_0^{n-m}}\;.
\end{equation} 
Then the LDP for paths $(x,x_1,\dots,x_n)$ yields 
\begin{equation}
 -\inf_{\mathring\Gamma_n} I(x,\cdot) 
 \leqs \liminf_{\sigma\to0} \sigma^2 \log\fP^{x}(\Gamma_n) 
 \leqs \limsup_{\sigma\to0} \sigma^2 \log\fP^{x}(\Gamma_n) 
 \leqs -\inf_{\bar\Gamma_n} I(x,\cdot)\;.
\end{equation} 
Since $\fP^{x}(\Gamma_n)$ is increasing in $n$, to prove the lower bound, it 
suffices to find $n\geqs2$, points $x_1, \dots, x_{n-1} \in (B_i\cup B_j)^c$ 
and 
$z \in \mathring{B_j}$ such that $ I(x,x_1,\dots,x_{n-1},z) \leqs H(i,j) + 
\eta$. To this end, let $y\in B_i$ and $z\in B_j$ be the points minimizing 
$V$. Since $V(y,z) \leqs H(i,j) + \eta/2$, there exist $n$, $x_1, \dots, 
x_{n-1}$ such that $I(y,x_1,\dots,x_{n-1},z) \leqs H(i,j) + 3\eta/4$.  We can 
assume that $x_1, \dots, x_{n-1} \notin B_i\cup B_j$ since otherwise there 
would 
exist a cheaper way to connect these sets. Replacing $y$ by $x$ 
increases $I$ by at most $\eta/6$, yielding the required path. 

To prove the upper bound, we have to show that for any $n$, and any path 
$(x_1,\dots,x_n)\in\bar\Gamma_n$, $I(x,x_1,\dots,x_n) \geqs H(i,j) - 
\eta$. This follows from the fact that $V(x,y) \geqs H(i,j) - \eta$ for all 
$y\in B_j$, since $V(x,y)$ involves the infimum over a larger set. 
\qed

\subsection{Proof of Proposition~\ref{prop:ldp_EMN}}

In the spirit of~\cite[Chapt.~6, Thm.~5.1]{FreidlinWentzell_book}, we first 
construct a path of finite length $n_0$ from $x$ to $\MN$, whose rate function 
$I$ is bounded by $\eta/2$. In the case where the $\omega$-limit set 
$\omega(x)$ 
is one of the stable fixed points $x^\star_i$, there exists $n_0\in\N$ such that 
$\Pi^{n_0}(x) \in B_i$. Setting $x_n=\Pi^n(x)$, we have $I(x,x_1,\dots,x_n) = 
0$. If $\omega(x)$ is an unstable fixed point $y^\star$, we can find $n_1\in\N$ 
such that $\norm{\Pi^{n_1}(x)-y^\star} \leqs \delta$. Since the stable manifolds 
of all unstable fixed points have codimension at least $1$, they cannot contain 
any open subset of $\X$. Thus there exists a point $y_1$ at distance at most 
$\delta$ from $y^\star$ such that $\omega(y_1)$ is a stable fixed point 
$x^\star_i$. Setting $x_n=\Pi^n(x)$ and $y_n=\Pi^{n-1}(y_1)$, we obtain the 
existence of $n_2\in\N$ such that $y_{n_2}\in B_i$ and 
\begin{equation}
 I(x,x_1,\dots,x_{n_1},y_1,\dots,y_{n_2}) = I(x_{n_1},y_1)\;.
\end{equation} 
The continuity of $I$ at $(y^\star,y^\star)$ implies that we can assume 
$I(x_{n_1},y_1)\leqs\eta/4$ by making $\delta$ small enough. 
The large-deviation lower bound implies that if $n_0=n_1+n_2$, then 
\begin{equation}
\label{eq:proof_LDP_MN} 
 \liminf_{\sigma\to 0} \sigma^2 \log\bigprobin{x}{\tau^+_{\MN}\leqs n_0} 
 \geqs -\frac{\eta}{2}\;,
\end{equation} 
so that there exists $\sigma_0>0$ such that 
$\bigprobin{x}{\tau^+_{\MN}\leqs n_0} \geqs \e^{-\eta/\sigma^2}$
holds for all $x\in\X$ and all $\sigma<\sigma_0$. 
To extend this result to an estimate on expected return times, we use the fact 
that for any sets $A, B, C\in\cS_0$ such that $B\cap C=\emptyset$, one has 
\begin{equation}
\label{eq:three_set_expectation} 
 \bigexpecin{A}{\tau^+_B} \leqs 
 \bigexpecin{A}{\tau^+_{B\cup C}} + 
 \bigprobin{A}{\tau^+_C < \tau^+_B} \bigexpecin{C}{\tau^+_B}\;,
\end{equation} 
where we write $\probin{A}{\cdot} = \sup_{x\in A}\probin{x}{\cdot}$. 
For a proof, see for instance~\cite[Lem.~8.9]{BaudelBerglund_2017} (the 
proof only requires $B\cap C=\emptyset$). 
Taking $A=\X$, $B=\MN$ and $C=\X\setminus\MN$, 
bounding $\expecin{C}{\tau^+_B}$ by $\expecin{A}{\tau^+_B}$,
and rearranging, we obtain 
\begin{equation}
 \bigexpecin{\X}{\tau^+_{\MN}} 
 \leqs \frac{\bigexpecin{\X}{\tau^+_{\X}}}
 {\bigprobin{\X}{\tau^+_{\MN} < \tau^+_{\X\setminus\MN}}}\;.
\end{equation} 
The same relation holds when the $\tau^+$ are replaced by the return times 
$\hat\tau^+$ of the diluted process $(X_{nn_0})_{n\in\N}$. This yields 
\begin{equation}
\label{eq:proof_EXMN_n0} 
 \bigexpecin{\X}{\tau^+_{\MN}} 
 \leqs n_0 \bigexpecin{\X}{\hat\tau^+_{\MN}}
 \leqs \frac{n_0\bigexpecin{\X}{\hat\tau^+_{\X}}}
 {\bigprobin{\X}{\hat\tau^+_{\MN} < \hat\tau^+_{\X\setminus\MN}}}\;.
\end{equation} 
Observe that for any $x\in\X$, 
\begin{align}
\bigprobin{x}{\hat\tau^+_{\MN} < \hat\tau^+_{\X\setminus\MN}}
&{}\geqs{} \bigprobin{x}{1 = \hat\tau^+_{\MN} < \hat\tau^+_{\X\setminus\MN}} \\
&{}={} \bigprobin{x}{X_{n_0}\in\MN} \\
&{}={} \bigprobin{x}{\tau^+_{\MN} \leqs n_0} 
- \bigprobin{x}{\tau^+_{\MN}\leqs n_0, X_{n_0}\notin\MN} \\
&{}\geqs{} \bigprobin{x}{\tau^+_{\MN} \leqs n_0} 
\Bigbrak{1 - \sup_{k\leqs n_0} \bigprobin{\MN}{X_k\notin\MN}}\;.
\end{align}
The supremum is exponentially small by Lemma~\ref{lem:exit_M}. 
Since $n_0$ is independent of $\sigma$ and 
$\bigexpecin{\X}{\tau^+_{\X}}$ is uniformly bounded, the result follows.
\qed


\section{Proofs for Section~\ref{sec:applic}} 
\label{sec:proof_applic} 


\subsection{Proof of Proposition~\ref{prop:pos_gaussian_map}} 
\label{ssec:proof_pos_gaussian_map} 

An important tool in the proof is the following coupling argument. 

\begin{proposition}[Coupling argument]
\label{prop:coupling} 
Let $K_A$ be a submarkovian kernel on a set $A$, and denote its 
killing time by $\tau_{A^c}$. Assume that there exist 
constants $r, \eta>0$ such that the density $k_A$ of $K_A$ satisfies the 
Harnack inequality 
\begin{equation}
 \sup_{x\in A\colon \norm{x-x_0}\leqs r} k_A(x,y) 
 \leqs (1+\eta) \inf_{x\in A\colon \norm{x-x_0}\leqs r} k_A(x,y)
\end{equation} 
for all $x_0, y\in A$. Let $(\hat X^{x}_n)_{n\geqs0}$ be the process 
with kernel $K_A$ conditioned on staying in $A$, defined by 
\begin{equation}
 \bigprob{\hat X^{x}_n\in B} = 
\frac{K^n_A(x,B)}{K^n_A(x,A)} 
\end{equation}
for any Borel set $B\subset A$. Assume that for any $x_1\neq x_2\in A$, there 
exists a coupling between $(\hat X^{x_1}_n)_{n\geqs0}$ and $(\hat 
X^{x_2}_n)_{n\geqs0}$ such that the stopping time 
\begin{equation}
 N(x_1, x_2) = \inf\bigsetsuch{n\geqs1}{\norm{\hat X^{x_2}_n - \hat X^{x_1}_n} 
\leqs r}
\end{equation} 
is almost surely finite, and define 
\begin{equation}
 \rho_n = \sup_{x_1\neq x_2\in A} \bigprob{N(x_1,x_2) > n}\;.
\end{equation} 
Then $k_A$ satisfies for every $n\in\N$ a uniform positivity condition with 
parameters $n$ and 
\begin{equation}
\label{eq:L} 
 L = \dfrac{(1+\eta)^2 + \rho_{n-1}\displaystyle\sup_{y\in A} 
 \Biggpar{\dfrac{\sup_{x\in A}k_A(x,y)}{\inf_{x\in A}k_A(x,y)}}}
{\displaystyle\inf_{x\in A}\bigprobin{x}{\tau_{A^c} > n}}\;.
\end{equation} 
\end{proposition}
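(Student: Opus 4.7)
The plan is to exploit the Chapman--Kolmogorov identity in order to decompose the $n$-step killed density as $n-1$ steps of the conditioned process followed by one step of the original kernel, and then to combine the assumed coupling on the conditioned processes with the local Harnack inequality applied at the last step. First, I would write for every $x, y \in A$
\[
 k^n_A(x, y) = \int_A k^{n-1}_A(x, z) k_A(z, y) \, \dd z
 = \bigprobin{x}{\tau_{A^c} > n-1} \, \E\!\bigbrak{k_A\bigpar{\hat X^{x}_{n-1}, y}}\;,
\]
using the identity $k^{n-1}_A(x, z) = \bigprobin{x}{\tau_{A^c} > n-1} \hat p^{x}_{n-1}(z)$, which relates the killed density to the density of the conditioned process.

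Next, for arbitrary $x_1, x_2 \in A$, I would use the assumed coupling of $(\hat X^{x_1}_k)_{k\geqs 0}$ and $(\hat X^{x_2}_k)_{k\geqs 0}$ to split the expectation $\E[k_A(\hat X^{x_2}_{n-1}, y)]$ on the events $\set{N \leqs n-1}$ and $\set{N > n-1}$. On the first event, $\norm{\hat X^{x_2}_{n-1} - \hat X^{x_1}_{n-1}} \leqs r$, so taking $x_0 = \hat X^{x_1}_{n-1}$ in the Harnack inequality yields
\[
k_A\bigpar{\hat X^{x_2}_{n-1}, y}
\leqs (1+\eta)\, k_A\bigpar{\hat X^{x_1}_{n-1}, y}\;.
\]
On the complementary event, whose probability is at most $\rho_{n-1}$ by definition, I would use the crude bound $k_A(\hat X^{x_2}_{n-1}, y) \leqs \sup_z k_A(z,y)$.

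Using $\bigprobin{x_2}{\tau_{A^c} > n-1} \leqs 1$ then produces an upper bound on $k^n_A(x_2, y)$ of the form $(1+\eta)\E[k_A(\hat X^{x_1}_{n-1}, y)] + \rho_{n-1} \sup_z k_A(z, y)$. I would rewrite the first term as a multiple of $k^n_A(x_1, y)$ via the identity $\E[k_A(\hat X^{x_1}_{n-1}, y)] = k^n_A(x_1, y)/\bigprobin{x_1}{\tau_{A^c} > n-1}$ together with the monotonicity $\bigprobin{x_1}{\tau_{A^c} > n-1} \geqs \inf_x \bigprobin{x}{\tau_{A^c} > n}$, which supplies the denominator of $L$. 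To recast the additive error term in relative form, I would invoke the elementary pointwise lower bound
\[
 k^n_A(x_1, y) \geqs \bigprobin{x_1}{\tau_{A^c} > n-1} \inf_z k_A(z,y)\;,
\]
which makes the ratio $\sup_z k_A(z,y)/\inf_z k_A(z,y)$ appear when the second term is divided by $\inf_{x_1} k^n_A(x_1, y)$. Taking infimum over $x_1$, supremum over $x_2$, and finally supremum over $y$ in the resulting inequality assembles the uniform positivity condition in the announced form.

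The main obstacle will be to keep track of the Harnack constants precisely in order to recover exactly the $(1+\eta)^2$ factor of~\eqref{eq:L} rather than the $(1+\eta)$ that the argument above produces at first sight; the idea is to use the Harnack inequality in a symmetric fashion, applying it both when upper-bounding $k^n_A(x_2, y)$ and when lower-bounding $k^n_A(x_1, y)$, so that the quotient of the two estimates naturally produces the square. A subsidiary technical point is to verify that the decomposition at time $n-1$ remains valid for any $n\in\N$ (including $n=1$, for which the statement is trivial because a single application of the Harnack inequality to any pair $x, y$ in a small enough part of $A$ already suffices).
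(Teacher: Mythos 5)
The paper's own proof of Proposition~\ref{prop:coupling} is only a pointer to \cite[Prop.~5.9]{berglund2014noise} and \cite[Prop.~5.4]{BaudelBerglund_2017}, so I am judging your proposal against what must hold for the stated constant~\eqref{eq:L} to come out. Your skeleton is the right one: the factorisation $k^n_A(x,y)=\bigprobin{x}{\tau_{A^c}>n-1}\,\E\bigbrak{k_A(\hat X^x_{n-1},y)}$, the split of the expectation on $\set{N\leqs n-1}$ and its complement, the crude bound $\rho_{n-1}\sup_z k_A(z,y)$ on the rare piece, and the conversion of the additive error into a relative one via $k^n_A(x_1,y)\geqs\bigprobin{x_1}{\tau_{A^c}>n-1}\inf_z k_A(z,y)$ are all correct. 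But there is a genuine gap in the treatment of the main event, and it is exactly where the $(1+\eta)^2$ you failed to recover is hiding.

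You apply Harnack at time $n-1$, claiming that on $\set{N\leqs n-1}$ one has $\norm{\hat X^{x_2}_{n-1}-\hat X^{x_1}_{n-1}}\leqs r$. This does not follow from the hypothesis: $N$ is the \emph{first} time the coupled chains are within distance $r$, and the assumption only says this first meeting is a.s.\ finite; nothing forces the chains to remain close afterwards, so on $\set{N<n-1}$ they may be far apart again at time $n-1$ and Harnack cannot be invoked there. (The couplings actually built in Appendix~B are contracting and do stay close once they meet, but that is extra structure, not part of the proposition.) The fix is to apply the Markov property at the stopping time $N$ itself. On $\set{N=m\leqs n-1}$ write $z_i=\hat X^{x_i}_m$, so $\norm{z_1-z_2}\leqs r$, and for the chain conditioned to survive to time $n-1$ one has $\E\bigbrak{k_A(\hat X^{x_i}_{n-1},y)\mid\hat X^{x_i}_m=z_i}=k^{n-m}_A(z_i,y)/K^{n-1-m}_A(z_i,A)$. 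Harnack applied to the first step of $k^{n-m}_A$ together with Chapman--Kolmogorov gives $k^{n-m}_A(z_2,y)\leqs(1+\eta)k^{n-m}_A(z_1,y)$, and the same argument applied to the survival normalisation gives $K^{n-1-m}_A(z_2,A)\geqs(1+\eta)^{-1}K^{n-1-m}_A(z_1,A)$. Each contributes one factor $(1+\eta)$; the square comes from the normalisation of the conditioned chain, which is itself a kernel quantity subject to Harnack, not from a vague "symmetric application" producing a quotient. Carrying this through reproduces~\eqref{eq:L} exactly.
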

\begin{proof}
See~\cite[Prop.~5.9]{berglund2014noise} 
and~\cite[Prop.~5.4]{BaudelBerglund_2017}.
\end{proof}

Fix $1\leqs i\leqs k\leqs N$. 
To apply the above coupling argument, we need to 
estimate the various terms appearing in~\eqref{eq:L}. We first claim that for 
any $\eta>0$, there exist $C, r>0$ such that the two relations 
\begin{align}
\label{eq:proof_pos_01} 
\sup_{x\in B_i}\ptrace{k_{\sigma,B_i}}{\cM}(x,y) 
&\leqs \e^{C/\sigma^2} 
\inf_{x\in B_i}\ptrace{k_{\sigma,B_i}}{\cM}(x,y) \\
\sup_{x\in B_i \colon \norm{x-x_0} \leqs r\sigma^2}
\ptrace{k_{\sigma,B_i}}{\cM}(x,y) 
&\leqs (1 + \eta) 
\inf_{x\in B_i \colon \norm{x-x_0} \leqs r\sigma^2}
\ptrace{k_{\sigma,B_i}}{\cM}(x,y)
\label{eq:proof_pos_02} 
\end{align}
hold for all $x_0, y\in B_i$. 
First note that the Gaussian density~\eqref{eq:gaussian_density} of the 
original kernel satisfies 
\begin{equation}
 \frac{k_\sigma(x_1,y)}{k_\sigma(x_2,y)}
 = \exp\Biggset{\frac{I(x_2,y)-I(x_1,y)}{\sigma^2}}\;,
\end{equation} 
where 
$I(x_2,y)-I(x_1,y) = \pscal{\Sigma^{-1}y}{\Pi(x_1)-\Pi(x_2)}
 + \frac12\pscal{\Pi(x_2)}{\Sigma^{-1}\Pi(x_2)}
 - \frac12\pscal{\Pi(x_1)}{\Sigma^{-1}\Pi(x_1)}$.
This quantity is bounded by a constant $C$ for all $x_1, x_2\in B_i$ and 
$y$ in a bounded set, and has order $r\sigma^2$ if in addition $\norm{x_1-x_2} 
\leqs r\sigma^2$. Hence $k_\sigma$ satisfies~\eqref{eq:proof_pos_01} 
and~\eqref{eq:proof_pos_02} if $r=r(\eta)$ is small enough. 

In order to extend this to $\ptrace{k_\sigma}{\cM}$, we use the fact that 
for all $n\in\N$ and $x_1, y\in B_i$, we have 
\begin{equation}
 \bigprobin{x_1}{\tau^+_{B_i} = n} k^n_\sigma(x_1,y) 
 = \int_{B_i^c} k_\sigma(x_1,z) 
 \bigprobin{z}{\tau^+_{B_i} = n-1} k^{n-1}_\sigma(z,y) \6z\;.
\end{equation} 
The Laplace method shows that the integral is dominated by $z$ of order $1$ at 
most. We can thus bound $k_\sigma(x_1,z)$ above 
by $\e^{C/\sigma^2}k_\sigma(x_2,z)$ for any $x_2\in B_i$, and by 
$(1+\eta)k_\sigma(x_2,z)$ if $\norm{x_1-x_2} \leqs r\sigma^2$. Together with 
the expression~\eqref{eq:density_trace} for the density of the trace process, 
this shows that $\ptrace{k_\sigma}{\cM}$ also 
satisfies~\eqref{eq:proof_pos_01} and~\eqref{eq:proof_pos_02}. 

Regarding the effect of the killing, denote by 
$\smash{\ptrace{\tau^+_{B_i^c}}{\cM}}$ the killing time of the trace process 
and observe that Proposition~\ref{prop:ldp_committor} yields 
\begin{equation}
 \bigprobin{B_i}{\ptrace{\tau^+_{B_i^c}}{\cM} = 1} 
 = \bigprobin{B_i}{\tau^+_{\cM\setminus B_i} < \tau^+_{B_i}} 
 \leqs \e^{-H/\sigma^2}
\end{equation}
for an $H>0$. By the Markov property, we get 
$\smash{\bigprobin{B_i}{\ptrace{\tau^+_{B_i^c}}{\cM} \leqs n}} \leqs n 
\e^{-H/\sigma^2}$ for any $n\in\N$. Since $k_\sigma(x,y)$ is bounded below by 
$\e^{-c\delta_0^2/\sigma^2}$ for $x, y\in B_i$, this shows that the killing 
has a negligible effect for $\delta_0$ small enough. It also shows that the 
denominator in~\eqref{eq:L} is close to $1$. 

It thus remains to show that $\rho_{n-1}$ in~\eqref{eq:L} can be made 
exponentially small for an $n$ of order $\log(\sigma^{-1})$. 
Let $(X^{x_1}_n)_{n\geqs0}$ and $(X^{x_2}_n)_{n\geqs0}$ denote the original 
processes driven by the same noise $(\xi_n)_{n\geqs1}$, and starting 
respectively from $x_1$ and $x_2$. With this coupling, writing $Y_n = X^{x_2}_n 
- X^{x_1}_n$, we see that 
\begin{equation}
 Y_{n+1} = \Pi(X^{x_1}_n + Y_n) - \Pi(X^{x_1}_n) 
 = A_n Y_n + b_n(Y_n)\;,
\end{equation} 
where $A_n = \partial_x\Pi(X^{x_1}_n)$ and $\norm{b_n(y)} \leqs M \norm{y}^2$ 
for bounded $y$ and some constant $M>0$. Since $\partial_x\Pi(x^\star_i)$ has 
spectral radius strictly smaller than $1$, there exists $0 < \varrho_1 < 1$ such 
that $A_n$ has spectral radius bounded by $\varrho_1$ for any $n$ such that 
$X^{x_1}_n \in B_i$. Thus there exists a norm $\norm{\cdot}'$, equivalent to the 
Euclidean norm, such that $\norm{A_n y}' \leqs \varrho_2 \norm{y}'$ for these 
$n$, where $\varrho_2 < 1$. Taking $\delta_0$ small enough that 
$M\norm{x_2-x_1}' < 1-\varrho_2$, we conclude that $\norm{Y_1}' \leqs \varrho 
\norm{x_2-x_1}'$. 

Since $\Pi(B_i) \subset B_i$ (where the inclusion is strict for $\delta_0$ 
small enough), there exists $\kappa>0$ such that $\probin{B_i}{X_1 \notin 
B_i} \leqs \e^{-\kappa/\sigma^2}$, and thus $\probin{B_i}{\exists \ell \leqs 
n \colon X_\ell \notin B_i} \leqs n\e^{-\kappa/\sigma^2}$ for any $n\in\N$. 
Hence the coupled trace processes conditioned on staying in $B_i$ satisfy 
\begin{equation}
\label{eq:proof_pos_03} 
  \bigprob{\norm{\hat X^{x_2}_n - \hat X^{x_1}_n}' > \varrho^n\norm{x_2 - x_1}'}
 \leqs \frac{2n\e^{-\kappa/\sigma^2}}{1 - 2n\e^{-\kappa/\sigma^2}}
 \leqs 3n\e^{-\kappa/\sigma^2}
\end{equation} 
for $\sigma$ small enough. Let $N(x_1,x_2) = \inf\setsuch{n\geqs1}{\norm{\hat 
X^{x_2}_n - \hat X^{x_1}_n}' \leqs r(\eta)\sigma^2}$, and let $n_1(\sigma)$ be 
such that $\diam(B_i)\varrho^{n_1(\sigma)} \leqs r(\eta)\sigma^2$. Note that 
$n_1(\sigma)$ has order $\log(\sigma^{-1})$, and that $\prob{N(x_1,x_2) > 
n_1(\sigma)}$ is bounded above in~\eqref{eq:proof_pos_03}. Applying the Markov 
property at times which are multiples of $n_1(\sigma)$, we obtain 
\begin{equation}
 \rho_{\ell n_1(\sigma)}
 = \bigprob{N(x_1,x_2) > \ell n_1(\sigma)} \leqs 
\bigpar{3n_1(\sigma)\e^{-\kappa/\sigma^2}}^\ell\;.
\end{equation} 
Choosing $\ell$ such that $\ell\kappa > C$, the result follows with 
$n_0(\sigma) = \ell n_1(\sigma)$, taking $\eta$ small enough.  
\qed


\subsection{Proof of Proposition~\ref{prop:EMN_gaussian}} 
\label{ssec:proof_EMN_gaussian} 

The proof is essentially an adaptation to the discrete-time setting of results 
in~\cite[Sect.~8.2 and~8.3]{BaudelBerglund_2017}, which rely in part on 
methods from~\cite[Chapt.~5]{Berglund_Gentz_book}. Since several proofs 
simplify in the present setting, we believe it is worth giving details 
here. 
In view of the upper bound~\eqref{eq:proof_EXMN_n0} for 
$\smash{\bigexpecin{\X}{\tau^+_{\MN}}}$ and the fact that 
$\bigexpecin{\X}{\tau^+_\X}$ is bounded by\Cref{ass:REC}, it is sufficient to 
show that $\smash{\bigprobin{\X}{\tau^+_{\MN} \leqs n_0}}$ is bounded away 
from $1$ for some $n_0$ of order $\log(\sigma^{-1})$.

Given $x\in\X$, we first give an estimate for the probabillity of the sample 
path $(X_n)_{n\geqs0}$ starting in $x$ deviating from the deterministic orbit 
$(X^{\det}_n)_{n\geqs0}$ defined by $X^{\det}_n = \Pi^n(x)$. 

\begin{lemma}
\label{lem:stable} 
There exist constants $\kappa>0$ and $h_0>0$ such 
that for any $n\in\N$, 
\begin{equation}
 \biggprobin{x}{\max_{1\leqs i\leqs n} \norm{X_i - X^{\det}_i} > h}
 \leqs n \exp\Biggset{-\frac{\kappa h^2}{\sigma^2G^{(2)}_n}}
\end{equation} 
holds whenever $0 < h < h_0/G^{(1)}_n$, where 
$ G^{(\ell)}_n = \sum_{i=0}^{n-\ell} D_n^{\ell i}$ with  
$ D_n = \max_{1\leqs i\leqs n} \norm{\partial_x \Pi(X^{\det}_i)}$.
\end{lemma}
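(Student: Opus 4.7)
My plan is to linearise the dynamics around the deterministic orbit $(X_i^{\det})$ and combine a bootstrap first-exit argument with Gaussian concentration. Setting $Z_i = X_i - X_i^{\det}$ and $A_i = \partial_x \Pi(X_i^{\det})$, a Taylor expansion of $\Pi$ around $X_i^{\det}$ gives $Z_{i+1} = A_i Z_i + \sigma \xi_{i+1} + R_i$, where the remainder satisfies $\|R_i\| \leqs M\|Z_i\|^2$ for a constant $M$ that is uniform on any bounded region, thanks to the $\cC^2$ assumption on $\Pi$. I then introduce the Gaussian linearisation $Y_i$ defined by $Y_0 = 0$ and $Y_{i+1} = A_i Y_i + \sigma \xi_{i+1}$; it admits the closed form $Y_k = \sigma \sum_{i=0}^{k-1} U_{k,i+1} \xi_{i+1}$ in terms of the fundamental matrix $U_{k,j} = A_{k-1}A_{k-2}\cdots A_j$, with the convention $U_{k,k} = \id$.

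The heart of the argument is a bootstrap on the first-exit time $\tau = \inf\setsuch{i\geqs 1}{\|Z_i\| > h}$. Writing $W_i = Z_i - Y_i$, the discrete variation-of-constants formula yields $W_k = \sum_{i=0}^{k-1} U_{k,i+1} R_i$. On the event $\set{\tau > k}$ with $k \leqs n$, we have $\|R_i\| \leqs Mh^2$ for all $i < k$, and since $\|U_{k,i+1}\| \leqs D_n^{k-1-i}$, it follows that $\|W_k\| \leqs Mh^2 \sum_{i=0}^{k-1} D_n^{k-1-i} \leqs Mh^2 G_n^{(1)}$. Choosing $h_0 = 1/(2M)$, the hypothesis $h < h_0/G_n^{(1)}$ forces $\|W_k\| \leqs h/2$ throughout this event. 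Consequently, if $\tau = k$ for some $k \leqs n$, the triangle inequality forces $h < \|Z_k\| \leqs \|Y_k\| + h/2$, hence $\|Y_k\| > h/2$.

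It remains to control the Gaussian process $Y_k$ and apply a union bound. Each $Y_k$ is a centred Gaussian vector whose covariance $\sigma^2 \sum_{i=0}^{k-1} U_{k,i+1} \Sigma \transpose{U_{k,i+1}}$ has operator norm bounded, up to a harmless constant, by $\sigma^2 c_+ G_k^{(2)} \leqs \sigma^2 c_+ G_n^{(2)}$, using that $D_k \leqs D_n$ and the monotonicity of $t \mapsto t^{2i}$ on $\R_+$. Standard Gaussian tail estimates then yield $\fP\bigset{\|Y_k\| > h/2} \leqs \exp\bigset{-\kappa h^2 / (\sigma^2 G_n^{(2)})}$ for some $\kappa > 0$, and a union bound over $k \in \set{1,\dots,n}$ produces the claimed factor of $n$ and concludes the proof. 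The only delicate point is closing the bootstrap, which is precisely what the threshold $h < h_0/G_n^{(1)}$ is designed to guarantee; the $\cC^2$ regularity of $\Pi$ enters only through the quadratic remainder bound, while the Gaussian structure of the noise enters only through the tail estimate on $Y_k$.
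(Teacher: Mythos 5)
Your argument is correct and follows essentially the same route as the paper: split $X_i - X_i^{\det}$ into the Gaussian linearisation and a nonlinear remainder, bound the remainder on the pre-exit event via the quadratic growth of the Taylor error and the fundamental solution estimate $\norm{U_{k,i+1}} \leqs D_n^{k-1-i}$, and close with a Gaussian tail estimate and a union bound. The only cosmetic differences from the paper are notational (the paper writes $Y_n$, $Y^0_n$, $R_n$ for your $Z_i$, $Y_i$, $W_i$) and the choice of split: you fix $H_0 = H_1 = h/2$ with $h_0 = 1/(2M)$, while the paper takes $H_0 = h(1 - G_n^{(1)} M h)$, $H_1 = G_n^{(1)} M h^2$, which in the regime $h < h_0/G_n^{(1)}$ yields the same order of bound. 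One small slip: you establish $\norm{W_k} \leqs h/2$ on the event $\set{\tau > k}$ but then invoke it on $\set{\tau = k}$; the bound does hold on $\set{\tau = k}$ as well (since $W_k$ depends only on $Z_0,\dots,Z_{k-1}$, which are still $\leqs h$ there), so nothing breaks, but the statement should read $\set{\tau \geqs k}$.
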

\begin{proof}
The difference $Y_n = X_n - X^{\det}_n$ satisfies 
$Y_0=0$ and 
\begin{align}
 Y_{n+1} &= 
 \Pi(X^{\det}_n + Y_n) - \Pi(X^{\det}_n) + \sigma\xi_{n+1} \\
 &= A_n Y_n + b(Y_n) + \sigma\xi_{n+1}\;,
\end{align} 
where we have set $A_n = \partial_x \Pi(X^{\det}_n)$, and $\norm{b(y)} \leqs 
M\norm{y}^2$ for bounded $y$ and some $M>0$. 
Consider first the linearized equation 
\begin{equation}
 Y^0_{n+1} = A_n Y^0_n + \sigma\xi_{n+1}\;,
 \qquad Y^0_0 = 0\;.
\end{equation} 
Its solution can be written 
\begin{equation}
 Y^0_n = \sigma\sum_{i=1}^n B_{ni}\xi_i\;,
\end{equation} 
where $B_{ni} = A_{n-1}\dots A_i$ if $i<n$ and $B_{nn}=\id$ is the identity 
matrix. Thus $Y^0_n$ is a centred Gaussian random variable with covariance 
matrix $\sigma^2 \Sigma_n$, where 
\begin{equation}
 \Sigma_n = \sum_{i=1}^n B_{ni}\Sigma \transpose{B_{ni}}\;.
\end{equation} 
We have $\norm{B_{ni}} \leqs D_n^{n-i}$ and $\norm{\Sigma_n} \leqs \norm{\Sigma}
G^{(2)}_n$. This implies that $\prob{\norm{Y^0_n}> h} \leqs \e^{-\kappa_0 
h^2/(G^{(2)}_n\sigma^2)}$ for some $\kappa_0>0$, and thus 
\begin{equation}
\label{eq:proof_Y0} 
 \biggprob{\max_{1\leqs i\leqs n} \norm{Y^0_i} > h}
 = \fP \Biggpar{\bigcup_{i=1}^n \bigset{\norm{Y^0_i} > h}}
 \leqs n \exp\Biggset{-\frac{\kappa_0 h^2}{G^{(2)}_n \sigma^2}}\;.
\end{equation} 
To extend this estimate to $Y_n$, we write $Y_n = Y^0_n + R_n$ and note that we 
have 
\begin{equation}
 R_{n+1} = A_n R_n + b(Y_n) 
 \qquad \Rightarrow \qquad 
 R_n = \sum_{i=2}^n B_{ni} b(Y_{i-1})\;.
\end{equation} 
Setting $\tau = \inf\setsuch{n\geqs1}{\norm{Y_n}>h}$, we have 
$\norm{b(Y_{n\wedge\tau})} \leqs Mh^2$ and 
$\norm{R_{n\wedge\tau}}\leqs G^{(1)}_n Mh^2$. 
For any decomposition $h = H_0 + H_1$ with $H_0, H_1>0$, we have 
\begin{equation}
 \bigprob{\tau < n} 
 \leqs \biggprob{\max_{1\leqs i\leqs n} \norm{Y^0_i} > H_0} 
 + \biggprob{\max_{1\leqs i\leqs n\wedge\tau} \norm{R_i} > H_1}\;.
\end{equation} 
The first term on the right-hand side can be estimated 
with~\eqref{eq:proof_Y0}, while the second one vanishes if $H_1 \geqs G^{(1)}_n 
Mh^2$. The result thus follows by setting $H_0 = h(1-G^{(1)}_nMh)$. 
\end{proof}

\begin{corollary}
\label{cor:stable} 
Let $x\in\X$ be such that $\omega(x)$ is a stable fixed point 
$\smash{x^\star_i}$.
Then there exist $n(x) < \infty$ and $\kappa(x)>0$ such that\/
$\bigprobin{x}{\tau^+_{\MN} \geqs n(x)} \leqs n(x)\e^{-\kappa(x)/\sigma^2}$.
\end{corollary}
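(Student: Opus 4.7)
The plan is to exploit the fact that under the deterministic map $\Pi$, the orbit of $x$ converges to $x^\star_i \in \interior(B_i) \subset \MN$ and therefore enters $\MN$ after finitely many iterations. Lemma~\ref{lem:stable} will then force the stochastic trajectory to do the same, up to an event of exponentially small probability in $\sigma$. The corollary thus reduces to a geometric bookkeeping around the deterministic orbit.

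Concretely, I would first set $d(x) = \inf_{y\in\partial B_i}\norm{x^\star_i - y}$, which is strictly positive since $x^\star_i$ lies in the interior of $B_i$. Because $\omega(x) = x^\star_i$, there exists $n_0 = n_0(x) \in \N$ such that $\norm{X^{\det}_{n_0} - x^\star_i} \leqs d(x)/3$. For this fixed $n_0$, the segment $(X^{\det}_i)_{0\leqs i\leqs n_0}$ is bounded and $\Pi$ is $\cC^2$, so the quantities $D_{n_0}$, $G^{(1)}_{n_0}$, $G^{(2)}_{n_0}$ appearing in Lemma~\ref{lem:stable} are all finite constants depending only on $x$. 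I would then choose
$$ h(x) = \min\Bigpar{\frac{d(x)}{3},\; \frac{h_0}{2G^{(1)}_{n_0}}} > 0, $$
which both keeps $h(x)$ bounded below by a positive constant and guarantees the validity condition $h(x) < h_0/G^{(1)}_{n_0}$ required by Lemma~\ref{lem:stable}.

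Applying Lemma~\ref{lem:stable} would then yield
$$ \bigprobin{x}{\max_{1\leqs i\leqs n_0}\norm{X_i - X^{\det}_i} > h(x)}
 \leqs n_0 \exp\Bigset{-\frac{\kappa\, h(x)^2}{\sigma^2\, G^{(2)}_{n_0}}}. $$
On the complementary event, the triangle inequality gives $\norm{X_{n_0} - x^\star_i} \leqs h(x) + d(x)/3 \leqs 2d(x)/3 < d(x)$, hence $X_{n_0} \in B_i \subset \MN$ and in particular $\tau^+_\MN \leqs n_0$. Setting $n(x) = n_0 + 1$ and $\kappa(x) = \kappa\, h(x)^2/G^{(2)}_{n_0}$ would then produce the announced bound $\bigprobin{x}{\tau^+_\MN \geqs n(x)} \leqs n(x)\e^{-\kappa(x)/\sigma^2}$.

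I do not anticipate any serious obstacle here, as the analytic content has already been absorbed into Lemma~\ref{lem:stable}. The only mild subtlety is respecting the range-of-validity $h < h_0/G^{(1)}_{n_0}$, which is precisely why $h(x)$ is defined as a minimum rather than simply $d(x)/3$; this costs only a fixed, $x$-dependent factor in $\kappa(x)$, which is harmless since the corollary only asserts pointwise existence of such constants.
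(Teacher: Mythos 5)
Your proof is correct and takes essentially the same route as the paper's: the paper picks $n(x)$ so that $\norm{\Pi^{n(x)}(x)-x^\star_i}<\delta/2$ and then invokes Lemma~\ref{lem:stable} with $h=\delta/2$. Your version is a slightly more careful rendering — replacing $\delta/2$ by $d(x)/3$ guarantees containment in $B_i$ regardless of the shape of $B_i$, and taking the minimum with $h_0/(2G^{(1)}_{n_0})$ explicitly enforces the range-of-validity hypothesis of Lemma~\ref{lem:stable} — but the underlying idea is identical.
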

\begin{proof}
By definition of $\omega$-limit sets, there exists $n(x)$ such that 
$\norm{\Pi^{n(x)}(x) - x^\star_i} < \delta/2$. It is thus sufficient to apply  
Proposition~\ref{lem:stable} with $h=\delta/2$. 
\end{proof}

This bound deteriorates when $x$ approaches an unstable fixed point of $\Pi$ 
(or the stable manifold of such a fixed point), because $n(x)$ diverges and 
$\kappa(x)$ tends to $0$. We thus have to treat these cases separately. In 
doing so, we will repeatedly use the following elementary estimate.

\begin{lemma}
\label{lem:three_set_prob} 
Let $A,B$ be two disjoint sets in $\cS_0$. 
Then for any $n_1, n_2\in\N$, 
\begin{equation}
 \bigprobin{A\cup B}{\tau^+_{(A\cup B)^c} \geqs n_1+n_2} 
 \leqs \bigprobin{A}{\tau^+_{A^c} \geqs n_1} 
 + \bigprobin{B}{\tau^+_{B^c} \geqs n_2}
 + \bigprobin{B}{\tau^+_A < \tau^+_{(A\cup B)^c}}\;.
\end{equation} 
\end{lemma}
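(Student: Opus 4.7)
The plan is to reduce the problem to two sub-cases according to whether the starting point lies in $A$ or in $B$, using the strong Markov property at the first exit from the starting set. First I would treat a starting point $y\in B$ by the simple dichotomy: either the process leaves $B$ by going directly to $(A\cup B)^c$, in which case $\tau^+_{B^c}=\tau^+_{(A\cup B)^c}\geqs n_1+n_2\geqs n_2$; or it leaves $B$ by entering $A$ first, which is precisely the event $\{\tau^+_A < \tau^+_{(A\cup B)^c}\}$. This gives immediately
\begin{equation*}
 \bigprobin{y}{\tau^+_{(A\cup B)^c} \geqs n_1+n_2}
 \leqs \bigprobin{y}{\tau^+_{B^c} \geqs n_2}
 + \bigprobin{y}{\tau^+_A < \tau^+_{(A\cup B)^c}}
\end{equation*}
for every $y\in B$, which already produces two of the three terms on the right-hand side.

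Next I would handle a starting point $x\in A$ by introducing the first exit time $\sigma_1 = \tau^+_{A^c}$ and splitting on $\{\sigma_1 \geqs n_1\}$ versus $\{\sigma_1 < n_1\}$. The first event is exactly $\{\tau^+_{A^c}\geqs n_1\}$ and contributes the first term in the bound. On $\{\sigma_1 < n_1\}$, the requirement $\tau^+_{(A\cup B)^c}\geqs n_1+n_2 > \sigma_1$ forces $X_{\sigma_1}\in B$ (since $X_{\sigma_1}\in A^c$ yet not in $(A\cup B)^c$). Applying the strong Markov property at $\sigma_1$ and using that $n_1+n_2-\sigma_1 > n_2$, I would control this contribution by $\sup_{y\in B}\bigprobin{y}{\tau^+_{(A\cup B)^c}\geqs n_2+1}$, which is in turn bounded by the two $B$-terms from the previous paragraph.

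Finally, taking the supremum of these bounds over $x\in A\cup B$ yields the claimed inequality. There is no real obstacle here: the argument is an elementary strong-Markov decomposition. The only point that requires a little care is verifying that the strict inequality $\sigma_1 < n_1$ combined with $\tau^+_{(A\cup B)^c}\geqs n_1+n_2$ indeed forces the intermediate position to lie in $B$ (so that the Markov step reduces to the $y\in B$ case), and that the residual exit time from that position is still bounded below by $n_2$; both are immediate from the definitions.
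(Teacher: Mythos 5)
Your proof is correct and follows essentially the same route as the paper's: for a starting point in $B$, split according to whether the chain exits $B$ directly into $(A\cup B)^c$ (giving the $\tau^+_{B^c}\geqs n_2$ term) or first enters $A$ (giving the committor term); for a starting point in $A$, split on $\tau^+_{A^c}\geqs n_1$ versus $\tau^+_{A^c}<n_1$, and in the latter case apply the strong Markov property at the exit time to land in $B$ with at least $n_2$ steps remaining, then invoke the $B$-case bound. The only cosmetic difference is that you spell out the strong-Markov step and the verification $X_{\tau^+_{A^c}}\in B$ in more detail than the paper's two-line sketch.
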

\begin{proof}
When starting in $B$, consider separately the cases $\tau^+_{(A\cup B)^c} = 
\tau^+_{A^c}$ and $\tau^+_{(A\cup B)^c} = \tau^+_{B^c}$. 
When starting in $A$, distinguish the cases $\tau^+_{A^c} \geqs n_1$ 
and $\tau^+_{A^c} < n_1$, and use the bound for starting points in $B$.
\end{proof}

To estimate exit probabilities from the neighbourhood of 
an unstable equilibrium point, we proceed in two steps, considering first the 
exit from a small neighbourhood of size $\sigma^{3/4}$, and then the exit from 
a larger neighbourhood of size $\delta$. 

\begin{lemma}
\label{lem:exit_S} 
Let $\cS$ be a neighbourhood of diameter $\sigma^{3/4}$ of an unstable fixed 
point $z^\star_j$. Then there exist constants $c_1, C_1>0$ such that 
\begin{equation}
 \bigprobin{\cS}{\tau^+_{\cS^c} > c_1 \log(\sigma^{-1})} 
 \leqs C_1 \log(\sigma^{-1})\sigma^{1/2}\;.
\end{equation} 
\end{lemma}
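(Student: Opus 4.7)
The plan is to exploit the expansion at the unstable fixed point: $A := \partial_x\Pi(z^\star_j)$ has at least one eigenvalue $\lambda_u$ with $|\lambda_u| > 1$, so the noise accumulated in the unstable direction should push the process out of $\cS$ in $O(\log(\sigma^{-1}))$ steps except on a Gaussian anti-concentration event. First I would take $z^\star_j = 0$, put $A$ in spectral block form with unstable part $A^u$ acting on the (generalized) eigenspace $E^u$, let $P^u$ be the corresponding projector, and pick an adapted norm (via the Jordan basis) for which $\|(A^u)^{-1}v\| \leqs \tilde\rho^{-1}\|v\|$ for all $v \in E^u$ and some $\tilde\rho \in (1, |\lambda_u|)$. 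Iterating $X_{n+1} = AX_n + b(X_n) + \sigma\xi_{n+1}$ with $\|b(y)\| \leqs M\|y\|^2$, applying $P^u$, and multiplying by $(A^u)^{-n}$ yields the Duhamel representation
\begin{equation}
(A^u)^{-n}P^u X_n \;=\; P^u X_0 + G_n + R_n,
\end{equation}
with $G_n := \sigma\sum_{i=1}^n (A^u)^{-i} P^u \xi_i$ a sum of independent centred Gaussians and $R_n := \sum_{i=1}^n (A^u)^{-i} P^u b(X_{i-1})$ the nonlinear remainder.

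On the event $E_n := \{\tau^+_{\cS^c} > n\}$ one has $\|X_i\| \leqs \sigma^{3/4}$ for every $i \leqs n$, hence $\|b(X_{i-1})\| \leqs M\sigma^{3/2}$ pathwise, and the geometric series yields $\|R_n\| \leqs CM\sigma^{3/2}$; simultaneously $\|(A^u)^{-n}P^u X_n\| \leqs \tilde\rho^{-n}\sigma^{3/4}$. Rearranging, the containment $E_n \subseteq \bigset{\|G_n + P^u X_0\| \leqs C(\tilde\rho^{-n}\sigma^{3/4} + \sigma^{3/2})}$ holds trajectory by trajectory. Under the ellipticity of $\Sigma$, the covariance of $G_n$ dominates $c\sigma^2 I$ on $E^u$ uniformly in $n$, since $\sum_i (A^u)^{-i} \Sigma (A^u)^{-i\ast}$ converges to a positive-definite operator; projecting $G_n$ onto a one-dimensional subspace of $E^u$ therefore yields a centred Gaussian with standard deviation at least $c'\sigma$. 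The standard 1D Gaussian anti-concentration estimate then gives, unconditionally,
\begin{equation}
\probin{x}{E_n} \;\leqs\; C\bigpar{\tilde\rho^{-n}\sigma^{-1/4} + \sigma^{1/2}}.
\end{equation}
Choosing $n = \lceil c_1 \log(\sigma^{-1}) \rceil$ with any $c_1 > 3/(4\log\tilde\rho)$ kills the first term, so $\probin{x}{E_n} = O(\sigma^{1/2})$, and taking the supremum over $x \in \cS$ delivers the claim (the logarithmic prefactor being comfortable slack).

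The main obstacle is the circular dependence between $R_n$ and the event $E_n$: the bound $\|R_n\| \leqs CM\sigma^{3/2}$ only holds on $E_n$, so the containment above must be read as a pointwise statement about trajectories before being translated into a probability bound on the unconditional law of $G_n$. The cleanest way to make this rigorous—and the one I would follow—is to introduce the linear Gaussian companion $\tilde X_n$ defined by $\tilde X_{n+1} = A \tilde X_n + \sigma\xi_{n+1}$, $\tilde X_0 = X_0$, driven by the same noise; a discrete Gronwall-type argument in the spirit of Lemma~\ref{lem:stable} gives $\sup_{i \leqs n} \|X_i - \tilde X_i\| = O(\sigma^{3/2}\tilde\rho^n)$ with overwhelming probability on $E_n$, and applying the Gaussian anti-concentration directly to $P^u \tilde X_n$ bypasses the circularity. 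The union bound over the $O(\log(\sigma^{-1}))$ steps used in that comparison is one natural source of the logarithmic prefactor in $C_1 \log(\sigma^{-1})\sigma^{1/2}$.
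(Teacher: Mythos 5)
Your proof is correct, and its core anticoncentration idea is the same as the paper's, though the execution differs in two respects. You project onto the full unstable eigenspace $E^u$ and renormalise by $(A^u)^{-n}$, so that the accumulated Gaussian $G_n = \sigma\sum_i(A^u)^{-i}P^u\xi_i$ has covariance bounded below on $E^u$ uniformly in $n$; one-dimensional Gaussian anticoncentration then gives $\prob{E_n} \leqs C(\tilde\rho^{-n}\sigma^{-1/4} + \sigma^{1/2})$. The paper instead singles out the generalised eigenspace of the eigenvalue of largest modulus $\lambda_+$ (dimension $m$), does not renormalise, and bounds the $m$-dimensional density of the linearised process $Y^0_n$ via $\det\cov(Y^0_n) \geqs c(\sigma\lambda_+^n)^{2m}$, yielding $\prob{\norm{Y^0_n} < h} \leqs C\bigpar{h/(\sigma\lambda_+^n)}^m = O(\sigma^{m/2})$ after choosing $h \sim \sigma^{3/4}$ and $n$ with $\lambda_+^n \sim \sigma^{-3/4}$. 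The two routes coincide for $m=1$, and the paper's is nominally sharper for $m>1$, but $\sigma^{1/2}$ already suffices for the lemma, so your one-dimensional shortcut is perfectly adequate. As for the ``circularity'' you flag: it is not actually an obstacle. The inclusion $E_n \subseteq \bigset{\norm{G_n + P^u X_0} \leqs C(\tilde\rho^{-n}\sigma^{3/4}+\sigma^{3/2})}$ is a genuine pathwise set containment --- on every realisation in $E_n$ the bound on $R_n$ holds trajectory by trajectory, hence so does the resulting constraint on $G_n$ --- and taking probabilities of both sides is then legitimate because $G_n$ is a fixed Gaussian random vector whose law one knows unconditionally. The paper handles the very same point by the equivalent device of choosing $h_1 = C_1Mh^2\lambda_+^n$ so that the remainder event $\bigset{\max_i\norm{(R_i,Q_i)} \geqs h_1,\ \max_i\norm{X_i} < h}$ is empty. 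Your proposed fallback via a linear companion process and a Gronwall-type comparison is therefore unnecessary, and would in fact be more delicate, since that comparison is itself only controlled up to the exit time you are trying to bound.
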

\begin{proof}
We may assume that $z^\star_j = 0$. 
Let $\lambda_+$ be the module of the largest eigenvalue of 
$\partial_x\Pi(z^\star)$, and let $m$ be its multiplicity. There exists a linear 
change of variables $X_n \mapsto (Y_n,Z_n)$ such that 
\begin{align}
Y_{n+1} &= A_+ Y_n + b_+(Y_n,Z_n) + \sigma\xi^+_{n+1}\;, 
& Y_0 &= y\;, \\
Z_{n+1} &= A_- Z_n + b_-(Y_n,Z_n) + \sigma\xi^-_{n+1}\;,
& Z_0 &= z\;, 
\label{eq:proof_stable_unstable} 
\end{align}
where $A_+$ is a square matrix of size $m$, all of whose eigenvalues are equal 
to $\lambda_+$, all eigenvalues of $A_-$ are strictly smaller in module than 
$\lambda_+$, $\norm{b_\pm(y,z)} \leqs M(\norm{y^2}+\norm{z}^2)$ for 
bounded $y$ and $z$, and the $\xi^\pm_n$ are nondegenerate Gaussian random 
variables. Let $Y^0_n$ obey the linearized dynamics 
$Y^0_{n+1} = A_+ Y^0_n + \sigma\xi^+_{n+1}$. Similarly to the 
Lemma~\ref{lem:stable}, we have $Y_n = Y^0_n + R_n$, where 
\begin{equation}
 Y^0_n = A_+^n y + \sigma\sum_{i=1}^n A_+^{n-i} \xi^+_i\;, 
 \qquad 
 R_n = \sum_{i=2}^n A_+^{n-i} b_+(Y_{i-1},Z_{i-1})\;.
\end{equation} 
A similar decomposition $Z_n = Z^0_n + Q_n$ holds for the second component. 
Let $\Sigma_+$ denote the covariance matrix of the $\xi^+_i$.
The law of $Y^0_n$ is Gaussian with covariance matrix 
\begin{equation}
 \cov(Y^0_n) = \sigma^2 \sum_{i=1}^n A_+^{n-i} \Sigma_+ (\transpose{A_+})^{n-i}
\end{equation} 
We have $\det\cov(Y^0_n) \geqs c(\sigma\lambda_+^n)^{2m}$ for some $c>0$, so 
that there exists $C>0$ such that 
\begin{equation}
 \bigprob{\norm{Y^0_n} < h} \leqs C\Biggpar{\frac{h}{\sigma\lambda_+^n}}^m  
\end{equation} 
for all $n\in\N$ and $h>0$. Setting $X^0_i=(Y^0_i,Z^0_i)$ we have for any 
$h_1>0$ 
\begin{align}
\biggprob{\max_{1\leqs i\leqs n} \norm{X_i} < h}
& \leqs \biggprob{\max_{1\leqs i\leqs n} \norm{X^0_i} < h+h_1}
+ \biggprob{\max_{1\leqs i\leqs n} \norm{(R_i,Q_i)} \geqs h_1,
\max_{1\leqs i\leqs n} \norm{X_i} < h}\;, \\
& \leqs \biggprob{\norm{Y^0_n} < h+h_1}
+ \biggprob{\max_{1\leqs i\leqs n} \norm{(R_i,Q_i)} \geqs h_1,
\max_{1\leqs i\leqs n} \norm{X_i} < h}\;.
\end{align}
The second term on the right-hand side vanishes if we set $h_1 = 
C_1Mh^2\lambda_+^n$ for a sufficiently large constant $C_1$. Choosing 
$h=\sigma^{3/4}$ and $n$ such that $\lambda_+^n \geqs \sigma^{-3/4}$, we obtain 
the result. 
\end{proof}

\begin{lemma}
\label{lem:exit_U} 
Let $\cU$ be a neighbourhood of diameter $\delta$ of an unstable fixed point 
$z^\star_j$. Then there exist constants $c_2, C_2>0$ such that 
\begin{equation}
 \bigprobin{\cU}{\tau^+_{\cU^c} > c_2 \log(\sigma^{-1})}
 \leqs C_2 \log(\sigma^{-1})\sigma^{1/2}\;.
\end{equation} 
\end{lemma}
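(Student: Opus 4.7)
The plan is to combine Lemma~\ref{lem:exit_S} with a deterministic escape argument in the annulus $\cU\setminus\cS$. By the strong Markov property applied at $\tau^+_{\cS^c}$, one has, for any $n_1,n_2\in\N$,
\begin{equation*}
 \bigprobin{\cU}{\tau^+_{\cU^c} > n_1 + n_2}
 \leqs \bigprobin{\cS}{\tau^+_{\cS^c} > n_1}
 + \sup_{y\in\cU\setminus\cS} \bigprobin{y}{\tau^+_{\cU^c} > n_2}\;,
\end{equation*}
since a sample path starting in $\cS$ and not in $\cU^c$ at time $\tau^+_{\cS^c}$ must lie in $\cU\setminus\cS$, from which the process restarts, while a path starting in $\cU\setminus\cS$ satisfies $\tau^+_{\cU^c}>n_1+n_2\;\Rightarrow\;\tau^+_{\cU^c}>n_2$ trivially. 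Choosing $n_1=\intpartplus{c_1\log(\sigma^{-1})}$ bounds the first term by $C_1\log(\sigma^{-1})\sigma^{1/2}$ via Lemma~\ref{lem:exit_S}, so the task reduces to estimating the second supremum for $n_2=\Order{\log(\sigma^{-1})}$.

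For this, I would introduce local coordinates $(Y,Z)$ near $z^\star_j$ adapted to the (nonlinear) local unstable and stable/center manifolds of $\Pi$ at $z^\star_j$, so that $Y$ tracks the unstable component and the invariance of the local stable manifold forces $b_+(0,z)\equiv 0$. Picking a Lyapunov norm $\norm{\cdot}'$ on the unstable subspace with $\norm{A_+ y}'\geqs\lambda'\norm{y}'$ for some $\lambda'>1$, one then has $\norm{b_+(Y_n,Z_n)}\leqs C\norm{Y_n}'\pth{\norm{Y_n}'+\norm{Z_n}}\leqs 2C\delta\norm{Y_n}'$ while the orbit remains in $\cU$. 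For $\delta$ small enough compared to $(\lambda'-1)/C$ this yields
\begin{equation*}
 \norm{Y_{n+1}}' \geqs \tfrac12(\lambda'+1)\norm{Y_n}' - \sigma\norm{\xi^+_{n+1}}\;,
\end{equation*}
so that $\norm{Y_n}'$ grows geometrically and must cross the threshold $\delta$, forcing exit from $\cU$, within $\Order{\log(\sigma^{-1})}$ further steps, provided $\norm{Y_0}'\geqs\sigma^{3/4}$. Stochastic deviations from the corresponding deterministic orbit are controlled by Lemma~\ref{lem:stable}.

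The hard part will be the case in which the starting point $y\in\cU\setminus\cS$ has $\norm{Y_0}'<\sigma^{3/4}$, i.e.\ the unstable component is small even though $\norm{y}\geqs\sigma^{3/4}$ (so $y$ lies close to the local stable manifold of $z^\star_j$). To cover this regime I would rerun the argument of Lemma~\ref{lem:exit_S} applied only to the $Y$-component, exploiting ellipticity of the noise covariance $\Sigma$: the projection of $\xi_n$ onto the unstable subspace has nondegenerate Gaussian law, and the analogue of estimate from the proof of Lemma~\ref{lem:exit_S} gives
\begin{equation*}
\biggprob{\max_{1\leqs i\leqs n}\norm{Y_i}'<\sigma^{3/4}}
\leqs C\bigpar{\sigma^{-1/4}\lambda_+^{-n}}^m + \Order{\sigma^{1/2}}\;,
\end{equation*}
which is $\Order{\sigma^{1/2}}$ for a suitable $n=\Order{\log(\sigma^{-1})}$. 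Concatenating this buildup phase with the subsequent geometric-growth phase via the strong Markov property, and summing the exceptional probabilities over both phases, yields the claim with $c_2$ equal to the sum of the two logarithmic constants.
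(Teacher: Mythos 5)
Your overall plan — split the escape into a buildup phase (starting with small unstable component) and a geometric-growth phase, treating them separately — is the same strategy the paper uses, which works with the set $\cK=\setsuch{(y,z)\in\cU}{\sigma^{3/4}\leqs\norm{y}\leqs\delta}$ and its complement $\cU\setminus\cK$, and combines them via Lemma~\ref{lem:three_set_prob}. Your alternative strong-Markov decomposition at $\tau^+_{\cS^c}$ is also correct and reduces to estimating $\sup_{y\in\cU\setminus\cS}\bigprobin{y}{\tau^+_{\cU^c}>n_2}$.

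There is, however, a genuine gap in the growth phase. You invoke Lemma~\ref{lem:stable} to control stochastic deviations from the deterministic orbit, but that lemma's hypothesis $h<h_0/G^{(1)}_n$ is precisely what fails near an unstable fixed point over $n=\Order{\log(\sigma^{-1})}$ steps: $D_n>1$ there, so $G^{(1)}_n$ and $G^{(2)}_n$ grow geometrically and the admissible deviation $h$ shrinks to well below $\delta$, so the lemma cannot guarantee a deviation of size $\delta/2$ which is what an exit argument needs. This is exactly why the paper restricts Lemma~\ref{lem:stable} to orbits converging to \emph{stable} fixed points (Corollary~\ref{cor:stable}) and, in the proof of Lemma~\ref{lem:exit_U}, works instead directly with the Lyapunov function $U_n=\norm{Y_n}^2$ and its square root $V_n$: one obtains a one-sided stochastic recursion $V_{n+1}\geqs\bar\lambda_+^{1/2}V_n+\sigma\bar g(X_n)\eta_{n+1}$ with a \emph{centred} noise term, iterates it to $V_n\geqs\bar\lambda_+^{n/2}(V_0+\sigma\zeta_n)$, and controls $\zeta_n=\sum_{i\leqs n}\bar\lambda_+^{(n-i)/2}\bar g(X_{i-1})\eta_i$ (which has bounded variance) by Chebyshev against the threshold $\sigma^{3/4}$; this is where the $\sigma^{1/2}$ comes from. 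Your own recursion $\norm{Y_{n+1}}'\geqs\tfrac12(\lambda'+1)\norm{Y_n}'-\sigma\norm{\xi^+_{n+1}}$ can in fact be iterated in this same spirit without any appeal to Lemma~\ref{lem:stable}, but you would need to actually control the noise sum $\sigma\sum_i\mu^{-i}\norm{\xi^+_i}'$ and extract the $\sigma^{1/2}$ rate; as written, the pessimistic one-sided noise makes that slightly less clean than the paper's centred version, and crucially the exceptional probability (the heart of the bound) is never computed in your plan. A smaller remark: in your buildup phase a direct rerun of Lemma~\ref{lem:exit_S} is not quite enough either, because there the orbit is not confined to a ball of radius $\sigma^{3/4}$ — $\norm{Z_n}$ can be of order $\delta$ — so the nonlinear remainder $R_n=\sum A_+^{n-i}b_+$ is of order $\delta$ rather than $\sigma^{3/4}$ after $\Order{\log(\sigma^{-1})}$ steps; one must instead exploit $b_+(0,z)=0$ to absorb the cross term into a time-dependent perturbation of the linear part $A_+$, as the paper's closing sentence indicates.
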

\begin{proof}
We may use a similar coordinate system as in~\eqref{eq:proof_stable_unstable}, 
except that now $Y$ contains all unstable directions, while $Z$ contains the 
marginally stable and stable ones. The center-stable manifold theorem allows us 
to assume that $b_+(0,z) = 0$ and $\norm{b_+(y,z)} \leqs M(\norm{y}^2 + 
\norm{y}\norm{z})$ in $\cU$ for some $M>0$. The Lyapunov function $U_n = 
\norm{Y_n}^2$ satisfies 
\begin{equation}
 U_{n+1} = \norm{A_+ Y_n}^2 
 + \bigbrak{2\pscal{b_+}{A_+ Y_n} + \norm{b_+}^2}
 + 2\sigma \pscal{A_+ Y_n + b_+}{\xi^+_{n+1}}
 + \sigma^2 \norm{\xi^+_{n+1}}^2\;.
\end{equation} 
All eigenvalues of $A_+$ have a module strictly larger that $1$, 
showing that $\norm{A_+ Y_n}^2 \geqs \lambda_+ U_n$ for some $\lambda_+ > 1$. 
The term in square brackets has order $U_n^{3/2} + U_n\norm{Z_n}$. Thus for 
small enough $\delta$, there exists $\bar\lambda_+ > 1$ such that 
\begin{equation}
 U_{n+1} \geqs \bar\lambda_+ U_n + \sigma g(X_n) \eta_{n+1}
 + \sigma^2 \norm{\xi^+_{n+1}}^2\;,
\end{equation} 
where $\norm{g(x)} \leqs M_1 U_n^{1/2}$ for some $M_1>0$, and $\eta_{n+1}$ is a 
centred Gaussian random variable of bounded variance. 
Let $\cK = \setsuch{(y,z)\in\cU}{\sigma^{3/4} \leqs \norm{y} \leqs \delta}$. 
For $n \leqs \tau^+_{\cK^c}$, we obtain that $V_n = U_n^{1/2}$ satisfies 
\begin{equation}
 V_{n+1} \geqs \bar\lambda_+^{1/2} V_n + \sigma\bar g(X_n) \eta_{n+1}\;,
\end{equation} 
where $\bar g$ is bounded in $\cK$. It follows that for $n\leqs\tau^+_{\cU^c}$, 
\begin{equation}
 V_n \geqs \bar\lambda_+^{n/2} \bigbrak{V_0 + \sigma\zeta_n}\;,
\end{equation} 
where $V_0 \geqs \sigma^{3/4}$ and $\zeta_n = \sum_{i=1}^n 
\bar\lambda_+^{(n-i)/2} \bar g(X_{i-1}) \eta_i$ has bounded variance. 
Chebyshev's inequality shows that 
\begin{equation}
\biggprob{\min_{1\leqs i\leqs n\wedge\tau^+_{\cK^c}}
\frac{\zeta_i}{\bar\lambda_+^{i/2}-1} < -\sigma^{3/4} } 
\leqs nC \sigma^{1/2}
\end{equation} 
for some $C>0$. Taking $n$ of order $\log(\sigma^{-1})$ such that 
$\bar\lambda_+^{i/2} > \delta/\sigma^{3/4}$, this yields the existence of 
constants $c_1, C_1>0$ such that 
\begin{align}
 \bigprobin{\cK}{\tau^+_{\cK^c} > c_1\log(\sigma^{-1})} 
 &\leqs C_1 \log(\sigma^{-1})\sigma^{1/2}\;, \\
 \bigprobin{\cK}{\tau^+_\cS < \tau^+_{\cU^c}} 
 &\leqs C_1 \log(\sigma^{-1})\sigma^{1/2}\;. 
 \label{eq:proof_bound_committor} 
\end{align} 
Applying Proposition~\ref{lem:three_set_prob} with $A=\cS$ and $B=\cK$ yields the claimed 
result with $\cK\cup\cS$ instead of $\cU$. The result can be extended to $\cU$ 
by showing that Proposition~\ref{lem:exit_S} also applies to the larger set 
$\cU\setminus\cK=\setsuch{(y,z)\in\cU}{\norm{y}\leqs\sigma^{3/4}}$, using the 
better bounds on $b_+$ due to the centre-stable manifold theorem, and analysing 
a slightly more general recursion for $Y_n$ with time-dependent linear part.
\end{proof}

To finish the proof, we have to deal with the possible existence of heteroclinic 
orbits. Denote the unstable fixed points by ${z^\star_1, \dots, 
z^\star_M}$. Let $\cU_i$ be a ball of diameter $\delta$ centred in $z^\star_i$, 
with $\delta$ small enough for Proposition~\ref{lem:exit_U} to apply. We denote the union 
of all $\cU_i$ by $\cU$. Define 
\begin{align}
\tau^{\det}_A(x) &= \inf\bigsetsuch{n\geqs1}{\Pi^n(x)\in A}\;, \\
 \cA_i &= \bigsetsuch{x\in\cX\setminus\cU}{X_{\tau^{\det}_{\cU}} \in \cU_i}\;.
\end{align} 
The set $\cA_i$ contains part of the stable manifold of $z^\star_i$. Note that 
$\cA_i$ contains no fixed points of $\Pi$, showing, by Proposition~\ref{lem:stable}, that 
$\smash{\bigprobin{\cA_i}{\tau^+_{\cA_i^c} \geqs n}}$ is exponentially small 
for some bounded $n$. Furthermore, the proof of 
Proposition~\ref{lem:exit_U}, in particular~\eqref{eq:proof_bound_committor}, shows that 
\begin{equation}
 \bigprobin{\cU_i}{\tau^+_{\cA_i} < \tau^+_{(\cA_i\cup\cU_i)^c}} 
\leqs C_1 \log(\sigma^{-1})\sigma^{1/2}\;.
\end{equation} 
This is because deterministic orbits starting on the boundary of $\cK$ on 
which $Y=\delta$ cannot enter $\cA_i$ (recall that there are no heteroclinic 
cycles). Thus Proposition~\ref{lem:three_set_prob} shows that there exist
constants $c_3,C_3>0$ such that 
\begin{equation}
 \bigprobin{\cU_i\cup\cA_i}{\tau^+_{(\cU_i\cup\cA_i)^c} > c_3 
\log(\sigma^{-1})} \leqs C_3\log(\sigma^{-1})\sigma^{1/2}\;.
\end{equation} 
When leaving $\cU_i$, it may happen that a trajectory enters the domain of 
attraction $\cA_j$ of another unstable fixed point, due to the existence of a 
heteroclinic orbit from $z^\star_i$ to $z^\star_j$. In this case we write 
$i\prec j$. Extending this relation by transitivity yields a strict partial 
order relation, owing to the fact that there are no heteroclinic cycles. 
Proposition~\ref{lem:stable} implies that whenever $i\prec j$ or $i$ and $j$ are not 
related, the probability, when starting from $\cU_j\cup\cA_j$, to hit 
$\cU_i\cup\cA_i$ before $(\cU_j\cup\cA_j)^c$ is exponentially small. 
Repeated application of Proposition~\ref{lem:three_set_prob} shows that, if 
$\hat\cU = \bigcup_{i=1}^M (\cU_i\cup\cA_i)$, then 
\begin{equation}
 \bigprobin{\hat\cU}{\tau^+_{\hat\cU^c} > c_4 
\log(\sigma^{-1})} \leqs C_4\log(\sigma^{-1})\sigma^{1/2}
\end{equation} 
holds for constants $c_4, C_4 > 0$. 
In $\X\setminus\hat\cU$, we can apply Corollary~\ref{cor:stable} with a uniformly 
bounded $n(x)$, which finishes the proof, applying one last time 
Proposition~\ref{lem:three_set_prob}.
\qed


\bibliographystyle{amsalpha}
{\small \bibliography{ref}}

\newpage

{\small \tableofcontents}

\vfill

\bigskip\bigskip\noindent
{\small
Institut Denis Poisson (IDP) \\ 
Universit\'e d'Orl\'eans, Universit\'e de Tours, CNRS -- UMR 7013 \\
B\^atiment de Math\'ematiques, B.P. 6759\\
45067~Orl\'eans Cedex 2, France \\
{\it E-mail address: }
{\tt nils.berglund@univ-orleans.fr}

\end{document}